\numberwithin{equation}{section}
\newtheorem{theorem}{Theorem}[section]
\newtheorem{lemma}[theorem]{Lemma}
\newtheorem{cor}[theorem]{Corollary}
\newtheorem{prop}[theorem]{Proposition}
\theoremstyle{definition}
\newtheorem{definition}[theorem]{Definition} 
\theoremstyle{remark}
\newtheorem *{remark}{Remark}
\newtheorem *{quest}{Question}
\newtheorem *{warn}{WARNING}
\numberwithin{equation}{section}
\newcommand{\co}{{\mathcal O}}
\newcommand{\C}{{\mathbb C}}
\newcommand{\R}{{\mathbb R}}
\newcommand{\CP}{{\mathbb CP}}
\newcommand{\proj}{{\mathbb P}}
\newcommand{\Z}{{\mathscr Z}}
\newcommand{\RP}{{\mathbb RP}}
\title{On the twistor theory of almost-Grassmannian manifolds}
\author{Matthew Lam}
\date{}
\begin{document}
\maketitle

\begin{abstract}
    In this document we present a twistor correspondence for half-flat almost-Grassmannian structures on real and complex manifolds. 
    We provide foundational results regarding local theory in the complex setting and a global correspondence when the underlying manifold is a real Grassmannian of 2-planes. Whereas twistor constructions typically involve moduli of closed curves in a complex manifold, we utilize and expand upon the more flexible approach pioneered by LeBrun and Mason using moduli of curves-with-boundary. 
\end{abstract}

\section{Introduction}


Twistor theory provides a mechanism for translating certain differential geometric questions into the language of complex geometry. The prototypical construction is the double fibration of flag manifolds
\begin{equation}\label{dg:doublefibration}
    \begin{tikzcd}
        & \textbf{F}_{1,2} (\C^4) \arrow[swap]{dl}{\mu} \arrow{dr}{\nu} & \\
        \textbf{F}_1 (\C^4) \ & & \textbf{F}_2 (\C^4)
    \end{tikzcd}
\end{equation}
where $\mu, \nu$ are first and second factor projections respectively. This picture allows one to transform information from $\textbf{F}_1 (\C^4)$, called the \emph{twistor space}, to $\textbf{F}_2 (\C^4)$, often called the \emph{compactified Minkowski space} or simply \emph{spacetime}. The correspondence between these two spaces has a simple geometric interpretation obtained by traversing up and down diagram (\ref{dg:doublefibration}): an element $\Pi \in \textbf{F}_2 (\C^4)$ corresponds to the set of lines contained in $\Pi$, which is $\mu(\nu^{-1}(\Pi)) \cong \CP^1$, and an element $L \in \textbf{F}_1 (\C^4)$ corresponds to the set of planes containing $L$, which is $\nu(\mu^{-1}(L)) \cong \CP^2$. Using a more involved type of transform, one can interpret general holomorphic data on the twistor space in terms of differential equations on the spacetime. This was first accomplished by Roger Penrose for self-dual metrics \cite{pen76} and the zero rest-mass equation \cite{pen681}, culminating in the seminal paper of Atiyah, Hitchin, and Singer \cite{ahs} which establishes the definitive twistor theory for 4-dimensional Riemannian geometry.

\begin{theorem}[Atiyah, Hitchin, Singer]\label{thm:ahs}
    Let $X$ be an oriented 4-manifold. Then a conformal structure on $X$ defines a natural almost-complex structure on $\proj(V_-)$, the projectified bundle of (local) anti-self-dual spinors, and this almost-complex structure is integrable if and only if the conformal structure is self-dual.
\end{theorem}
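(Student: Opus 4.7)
The plan is to construct the almost-complex structure explicitly, verify it descends to a conformal (rather than metric) invariant, and then reduce its integrability to a curvature identity via the Newlander–Nirenberg theorem.

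First I would fix a representative metric $g$ in the conformal class and use its Levi-Civita connection to obtain a horizontal distribution on the total space of $\proj(V_-) \to X$. At a point $(x, [\sigma])$ this gives a splitting $T_{(x,[\sigma])} \proj(V_-) \cong T_x X \oplus T_{[\sigma]} \proj(V_-^x)$. The vertical summand is a tangent space to $\CP^1$ and carries its canonical complex structure. For the horizontal summand, I would use the spinor identification $TX \otimes \C \cong V_+ \otimes V_-$ (granted by the oriented conformal structure through the associated $\mathrm{Spin}(4) = \mathrm{SU}(2) \times \mathrm{SU}(2)$ bundle): each nonzero $\sigma \in V_-^x$ determines an orthogonal almost-complex structure $J_\sigma$ on $T_x X$ by declaring its $(1,0)$-subspace to be $V_+ \otimes \langle \sigma \rangle \subset V_+ \otimes V_- \cong T_xX \otimes \C$. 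Since $J_\sigma$ depends only on $[\sigma]$, this defines an almost-complex structure $J$ on $\proj(V_-)$.

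Next I would check conformal invariance. A Weyl rescaling $g \mapsto e^{2\varphi} g$ changes the Levi-Civita connection, and hence the horizontal distribution, by an explicit term involving $d\varphi$. A direct computation, using that $V_-$ has a natural conformal weight and that $J_\sigma$ is itself rescaling-equivariant, should show that the modification of the horizontal lift is exactly absorbed into the fiber direction via $J$, so that the resulting $J$ on $\proj(V_-)$ is independent of the chosen $g$.

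The hard step is the integrability computation. By Newlander–Nirenberg, I must show that the Nijenhuis tensor $N_J$ vanishes precisely when the self-dual Weyl tensor condition holds. I would decompose $N_J$ according to the horizontal-vertical splitting. The vertical-vertical component vanishes because each fiber is a holomorphic $\CP^1$; the mixed horizontal-vertical component vanishes from the naturality of $\sigma \mapsto J_\sigma$ together with the fact that the connection on $V_-$ is compatible with the identification. The genuine content is the horizontal-horizontal component: applying the structure equations of the connection, the $(0,2)$-part of the curvature of the horizontal distribution evaluated at $[\sigma]$ reduces, after using the first Bianchi identity and the spinorial decomposition $R = W_+ + W_- + (\text{Ricci}) + (\text{scalar})$, to the contraction of the anti-self-dual Weyl tensor $W_-$ with $\sigma \otimes \sigma$. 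The Ricci and scalar pieces act trivially under this contraction because they factor through $V_+$ or preserve each $V_\pm$ irreducibly, and $W_+$ acts only on the $V_+$ factor. Hence $N_J$ vanishes at every $(x,[\sigma])$ if and only if $W_-$ vanishes identically, which is the definition of self-duality. The main obstacle is carrying out the spinorial computation cleanly; the most efficient route is probably to work in a local orthonormal frame, introduce the connection one-forms on $V_-$, and read off the $(0,2)$-part of their exterior derivatives using the curvature decomposition, rather than manipulating $N_J$ from its vector-field definition.
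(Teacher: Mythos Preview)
The paper does not prove this theorem; it is quoted in the introduction as a classical result of Atiyah, Hitchin, and Singer with a citation to \cite{ahs}, and serves only as motivation for the almost-Grassmannian generalization developed later. There is therefore no proof in the paper to compare your proposal against directly.

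That said, your outline is the standard argument and is essentially correct. The one step deserving more care is conformal invariance: the change in the horizontal distribution under $g \mapsto e^{2\varphi}g$ produces a vertical correction, and you must check that this correction is $J$-invariant (not merely that it is ``absorbed''); this follows because the correction term is a real tangent vector to the $\CP^1$ fiber multiplied by a scalar depending on $d\varphi$, but it is not automatic from what you wrote.

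It is worth noting that the paper does prove the almost-Grassmannian analog (the propositions in Section~\ref{twistorcorr} establishing Theorem~\ref{thm:A}), and the method there differs from yours. Rather than decomposing the Nijenhuis tensor and extracting a curvature component, the paper expresses the generators $\mathfrak{w}_j$ of the distribution $\mathcal{D}$ in coordinates with coefficients polynomial in the fiber variable $\zeta$, observes that involutivity along the real equator $\zeta \in \R$ is exactly right-flatness, and then invokes holomorphicity in $\zeta$ to conclude that the O'Neill tensor vanishes everywhere. This analytic-continuation argument sidesteps the explicit spinorial curvature computation but depends on having a real slice; your direct approach is more self-contained and makes the identification of the obstruction with $W_-$ transparent.
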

This construction is moreover invertible, and therefore describes (locally) all self-dual metrics. In their theory, self-dual conformal manifolds arise as moduli spaces of compact complex curves in the twistor space. Such moduli spaces are themselves complex manifolds, and real geometries are obtained by imposing an anti-holomorphic involution on the twistor space; the curves invariant under this involution form the desired real manifold which is automatically endowed with a self-dual conformal metric. In the split-signature (++$--$) case, this anti-holomorphic involution has fixed points and divides these invariant curves into two hemispheres. It was then observed by LeBrun and Mason that, rather than utilize an anti-holomorphic involution, one might instead focus on its fixed-point set. This subtle shift in perspective leads one to consider holomorphic curves with boundary along a totally-real submanifold inside the twistor space. The advantage of this approach is that moduli of curves-with-boundary are more flexible than moduli of closed curves fixed by an involution, which in some situations are unable to capture global phenomena. Their theory led to a successful characterization of Zoll surfaces \cite{lm0207} and self-dual Zollfrei 4-manifolds \cite{lm0702}.

In this document, the geometric structure of interest is what we call an \emph{almost-Grassmannian structure}. Elsewhere these may be referred to as \emph{paraconformal structures}, or a particular type of parabolic geometry \cite{cap}. The relevant background is summarized in section \ref{aggeometry}, but for the purposes of this introduction, a $(p, q)$ almost-Grassmannian structure on a (complex) manifold $M$ is a factorization of its tangent bundle,
\begin{equation}
    TM \cong E \otimes H,
\end{equation}
where the two factors have (complex) ranks $p,q$ respectively. This terminology is motivated by the fact that a Grassmannian manifold carries a canonical such structure. 
In the particular case $p = q = 2$, an almost-Grassmannian structure on a complex manifold is equivalent to a conformal metric with a spin structure, and the factors in the tensor product are essentially spin bundles. This definition therefore generalizes the setting in which twistor theory originally met with such success. The foundations of almost-Grassmannian geometry in the complex setting were laid out in \cite{be91}, which largely serves as a template upon which we overlay the holomorphic disk techniques developed by LeBrun and Mason.

An almost-Grassmannian structure determines preferred subspaces, which are obtained by fixing an element in one of the factors and sweeping out the other.
\begin{equation}
    \{ e \otimes h ~:~ e\in E|_p \text{ free and } h \in H|_p \text{ fixed} \} \subset T_p M
\end{equation}
When these preferred subspaces are integrable, the almost-Grassmannian structure is said to be \emph{right-flat}. As an example, the canonical almost-Grassmannian structure on a Grassmannian is right-flat. The local geometry of these manifolds is examined in sections \ref{local} and \ref{curvature}, where we establish several foundational results. The main tool here is the \emph{local twistor bundle}, which is traditionally defined in a rather ad hoc manner. Using the Ward correspondence, which relates certain holomorphic vector bundles over a twistor space with vector bundles on the spacetime, we prove a more direct and geometric description of the local twistor bundle.  

\begin{theorem}
    The Ward correspondence takes the dualized jet bundle $[J^1\co(1)]^*$ to the local twistor bundle.
\end{theorem}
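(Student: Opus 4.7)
The plan is to compute the Ward transform of $[J^1\co(1)]^*$ by pushdown through the double fibration and to match the result with the local twistor bundle via its extension data. I would start from the tautological jet exact sequence on the twistor space $Z$,
\begin{equation*}
    0 \to \Omega^1_Z \otimes \co(1) \to J^1\co(1) \to \co(1) \to 0,
\end{equation*}
and dualize it to
\begin{equation*}
    0 \to \co(-1) \to [J^1\co(1)]^* \to T_Z \otimes \co(-1) \to 0.
\end{equation*}

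First I would restrict the dualized sequence to a fiber $L_p = \mu(\nu^{-1}(p))$ of the Ward double fibration. The splitting type of $T_Z|_{L_p}$ is pinned down by the almost-Grassmannian data on $M$: the tangent-to-$L_p$ directions contribute $\co(2)$ factors and the normal bundle contributes $\co(1)$ factors, with multiplicities dictated by $(p,q)$. Twisting by $\co(-1)$ yields a sum of $\co(1)$ and trivial factors, and the relevant $\text{Ext}^1$ vanishes, so $[J^1\co(1)]^*|_{L_p}$ is holomorphically trivial of the expected rank $p+q$. This is exactly the triviality hypothesis needed for the Ward transform $\nu_* \mu^*$ to yield a genuine vector bundle on $M$.

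Next I would push the dualized jet sequence down through the double fibration. The subbundle $\nu_*\mu^* \co(-1)$ and the quotient $\nu_* \mu^*(T_Z \otimes \co(-1))$ can be identified with appropriately weighted versions of the factors $H^*$ and $E$ respectively, using the relative Euler sequence along $\nu$ together with the factorization $TM \cong E \otimes H$. This exhibits the Ward transform as a canonical extension of the correct rank and with the correct associated graded pieces to match the local twistor bundle.

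The main obstacle is verifying that the extension class produced by this pushdown agrees with the class that defines the local twistor bundle in the first place. Since the local twistor bundle is traditionally constructed in \cite{be91} by prolongation of a canonical first-order operator on $M$, the strategy is to invoke naturality of $J^1$: both classes encode the tautological first-order behavior of sections of $\co(1)$, one computed upstairs on $Z$ and pushed through $\nu_* \mu^*$, the other computed directly downstairs by prolongation. Tracing the two constructions through the double fibration, using adjunctions between the pullback and pushforward functors and the curvature identities established in section \ref{curvature}, should reduce the comparison of extension classes to a pointwise first-order check along a single twistor fiber.
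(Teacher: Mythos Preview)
Your approach is genuinely different from the paper's, and the gap you yourself flag---matching the extension class---is real and not resolved by what you have written.

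The paper does not work with the jet sequence or extension classes at all. Instead it first proves a lemma identifying $J^1\co(1) \cong \mathscr{L}^* \otimes \co(1)$, where $\mathscr{L}$ is built from the tangent bundle of the total space $L$ of $\co(-1)$ minus the zero section, quotiented by the natural $\C^*$ action. Points of $L$ are explicitly pairs $(\Sigma, \pi_{A'})$ consisting of an $\alpha$-surface together with an auto-parallel spinor along it; a tangent vector to $L$ is then the derivative of a one-parameter family of such pairs, which the paper writes concretely as $(\omega^A, \eta_{A'})$ with $\omega^A = J^{AA'}\pi_{A'}$ for a connecting vector field $J^{AA'}$ and $\eta_{A'}$ recording the variation of $\pi_{A'}$. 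Twisting by $\co(-1)$ cancels the $\C^*$ weight, and one is left with exactly a pair of spinors of local-twistor type. The paper then invokes a lemma of Bailey--Eastwood characterizing $T^{1,0}\Z$ over an $\alpha$-surface to verify directly that $(\omega^A, \eta_{A'})$ is parallel for local twistor transport along that $\alpha$-surface. This simultaneously identifies the bundle \emph{and} its Ward connection with the local twistor bundle and its transport, with no extension-class bookkeeping.

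Your route via the dualized jet sequence is reasonable in outline and the triviality-on-lines check is fine, but two things are missing. First, the extension-class comparison is the entire content of the theorem once the associated graded pieces are identified, and your appeal to ``naturality of $J^1$'' together with a ``pointwise first-order check'' is not a proof: you would need to exhibit an actual map of extensions, or compute both classes in a common cohomology group and compare. The reference to section~\ref{curvature} does not help---that section treats third-order obstructions to fattening a twistor line and contains no identities relevant here. Second, the Ward correspondence carries a connection (flatness along $\alpha$-surfaces), and the local twistor bundle carries local twistor transport; you never address why these agree. The paper's geometric argument handles both the bundle and the connection in a single stroke, which is the main advantage of that route over yours.
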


The local twistor bundle is useful because it precisely encodes the curvature components governing the almost-Grassmannian geometry. As a first application, we prove a characterization of locally flat almost-Grassmannian structures in terms of these curvatures and the torsion.

\begin{theorem}
    A $(p,q)$ almost-Grassmannian manifold is locally equivalent to the standard Grassmannian if and only if
    \begin{align*}
        T_{ab}{}^c = 0 &~\text{ for }~ p,q > 2, \\
        \tilde{F}_{AA'BB'}{}^{CC'} = 0 ~\text{and}~ \Psi_{ABC}{}^D = 0 &~\text{ for }~ p>2, q=2. 
    \end{align*}
\end{theorem}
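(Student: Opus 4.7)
Suppose $M$ is locally equivalent to the standard Grassmannian. The listed tensors are intrinsic to the almost-Grassmannian structure (built equivariantly from the Cartan data), so under a local equivalence they pull back from the corresponding tensors on the model. The model is homogeneous under $PGL(p+q, \C)$, whose action is transitive on admissible frames; hence any such equivariant tensor must vanish identically on the model, and therefore on $M$.

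\textbf{Converse.} The strategy is to use the local twistor bundle $[J^1 \co(1)]^*$, identified in the previous theorem via the Ward correspondence. This bundle inherits a natural linear connection, and its curvature is expressible in terms of the intrinsic tensors of the almost-Grassmannian structure. I will argue that $T_{ab}{}^c$, $\tilde F_{AA'BB'}{}^{CC'}$, and $\Psi_{ABC}{}^D$ constitute the fundamental components of this curvature, so that their vanishing forces flatness of the local twistor connection. A flat local twistor bundle then trivializes locally, and choosing $p+q$ linearly independent parallel sections produces a local diffeomorphism onto an open subset of the Grassmannian that intertwines the almost-Grassmannian structures.

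Concretely, the steps are: (i) express the curvature of the local twistor connection in the notation of section \ref{curvature}; (ii) decompose it into irreducible components under the parabolic subgroup $P \subset SL(p+q, \C)$ stabilizing a $p$-plane; (iii) identify the harmonic (Kostant-cohomological) components with the listed invariants — in the case $p, q > 2$ there is a single torsion-type component realized as $T_{ab}{}^c$, while in the case $p > 2$, $q = 2$ there are two components (analogous to the Weyl and Cotton tensors in four-dimensional conformal geometry) realized as $\tilde F$ and $\Psi$; and (iv) apply the differential Bianchi identity to propagate vanishing of the harmonic components to vanishing of the full curvature, after which integration of the flat connection yields the local equivalence.

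The main obstacle is step (iv) in the $q = 2$ case. There, the curvature splits into half-flat pieces as in conformal four-geometry, so $\tilde F = 0$ alone gives only half-flatness — one needs the supplementary vanishing of the tensor $\Psi$ to eliminate the other half. Tracking the Bianchi identities carefully in this setting, and verifying that no additional curvature obstruction arises at higher jet order, constitutes the technical heart of the proof. By contrast, the $p, q > 2$ case is comparatively clean: Kostant's theorem already isolates $T_{ab}{}^c$ as the unique harmonic component, so the propagation in step (iv) is essentially automatic.
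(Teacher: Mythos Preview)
Your overall strategy coincides with the paper's: both arguments pass through the local twistor bundle, show that the stated hypotheses force its connection to be flat, and then exploit flatness to build a local equivalence with the model. The tactics differ in two places worth flagging.

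For the flatness step, you propose Kostant cohomology plus Bianchi identities to propagate vanishing of harmonic components to vanishing of the full curvature. The paper sidesteps this entirely: it simply observes that the local twistor connection is scale-invariant, so its curvature can involve only scale-invariant curvature components of the underlying structure, and these are determined algebraically by the listed tensors (by torsion alone when $p,q>2$; by torsion together with $\Psi_{ABC}{}^{D}$ when $p>2,\,q=2$). No differential identity or representation-theoretic machinery is needed. Your approach is the standard parabolic-geometry argument and generalizes well, but here it is heavier than necessary.

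The more substantive gap is your final step. You assert that parallel sections produce a local diffeomorphism to the Grassmannian that ``intertwines the almost-Grassmannian structures,'' but you give no reason why the map respects the structure. The paper handles this concretely: the map sends $x$ to the $q$-plane $\{\text{parallel twistors with }\omega^A(x)=0\}$ inside $\mathbb{T}=\tau_O$, and then invokes the preceding lemma --- that zero sets of nonzero parallel local twistors are exactly the $\alpha$-surfaces --- to show that $\alpha$-surfaces go to standard $\alpha$-surfaces in $Gr(q,\mathbb{T})$, which is what forces the structures to match. Your sketch needs either this lemma or an explicit appeal to the Cartan equivalence theorem for flat parabolic geometries; as written, the intertwining claim is unsupported.
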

In section \ref{curvature} we discuss a twistor-theoretic obstruction to equivalence with the standard Grassmannian, and in section \ref{twistorcorr} we turn our focus to real manifolds and prove an almost-Grassmannian analog of theorem (\ref{thm:ahs}).   
\begin{theorem}\label{thm:A}
    A $(p, 2)$ real almost-Grassmannian structure defines a natural distribution of complex $p + 1$ planes on the projectivized complexified spin bundle $\proj(H \otimes \C)$. This distribution is Frobenius integrable if and only if the almost-Grassmannian structure is right-flat.
\end{theorem}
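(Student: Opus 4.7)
The plan is to closely follow the blueprint of the Atiyah--Hitchin--Singer theorem. Fix any Weyl connection from the preferred class associated to the almost-Grassmannian structure; this induces a linear connection on the rank-2 bundle $H$, and hence a horizontal distribution on the total space of $\proj(H \otimes \C)$. At $[h] \in \proj(H_x \otimes \C)$ I would define
\[
\mathcal{D}_{[h]} \;=\; V^{0,1}_{[h]} \;\oplus\; \mathrm{Hor}_{[h]}(E_x \otimes h) \;\subset\; T_{[h]}\proj(H \otimes \C) \otimes_\R \C,
\]
where $V^{0,1}_{[h]}$ is the antiholomorphic vertical line coming from the natural complex structure on the $\CP^1$-fiber, and $\mathrm{Hor}_{[h]}$ denotes the horizontal lift of the complex $p$-plane $E_x \otimes h \subset T_xM \otimes \C$. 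This has complex rank $p + 1$. The first step is to check $\mathcal{D}$ is canonical: any two Weyl connections induce connections on $H$ that differ by a 1-form acting as a scalar on each fiber, so their horizontal lifts at $[h]$ differ by a vertical vector proportional to $h$, which is killed under the canonical identification $V^{1,0}_{[h]} \cong H_{x,\C}/\C h$. Thus $\mathcal{D}$ is independent of the Weyl connection chosen.

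Next I would verify Frobenius integrability by computing brackets on local generators of $\mathcal{D}$. Brackets within $V^{0,1}$ vanish since it is a line. A direct coordinate calculation shows that the bracket of a section of $V^{0,1}$ with any horizontal lift of the form $\widetilde{e \otimes h}$ remains in $V^{0,1}$. The essential case is two horizontal lifts $\tilde X, \tilde Y$ of $X = e_1 \otimes h$ and $Y = e_2 \otimes h$. Using the standard identity
\[
[\tilde X, \tilde Y] \;=\; \widetilde{[X, Y]_M} \;-\; \bigl(\Omega^H(X, Y)\,h\bigr)^{\sharp},
\]
where $\Omega^H$ is the $H$-curvature and $\sharp$ denotes the vertical fundamental vector field (which always lies in $V^{1,0}_{[h]} \cong H_{x,\C}/\C h$), the requirement $[\tilde X, \tilde Y] \in \mathcal{D}$ reduces to two tensorial conditions: (i) $[X, Y]_M \in E \otimes h$, and (ii) $\Omega^H(X, Y)\,h \in \C h$. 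Condition (i), asserted for all admissible $(e_1, e_2, h)$, is exactly the integrability of the $\alpha$-plane field on $M$---the definition of right-flatness.

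The main obstacle is handling condition (ii): showing the vertical curvature term is forced to vanish modulo $\C h$ whenever the AG structure is right-flat. I would address this via the second Bianchi identity for a torsion-free preferred connection, combined with the AG curvature decomposition developed in Section \ref{curvature}, which packages the torsion obstruction to right-flatness and the relevant component of $\Omega^H$ into a common tensorial obstruction. This parallels the classical AHS situation in which $\alpha$-plane integrability on the base and the vanishing of the anti-self-dual Weyl tensor are two expressions of the same self-duality condition. The converse direction of the theorem is immediate: Frobenius integrability of $\mathcal{D}$ already forces condition (i) for all choices of $(e_1, e_2, h)$, which is precisely the definition of right-flatness.
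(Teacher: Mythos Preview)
Your approach is in the AHS spirit and is genuinely different from the paper's; as written it also contains a gap.

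The paper never computes curvature. It chooses generators $\mathfrak{w}_1,\dots,\mathfrak{w}_p$ of $\mathcal{D}$ modulo $V^{0,1}$ that are \emph{polynomial in the fibre coordinate $\zeta$ with real coefficients}. Right-flatness is exactly the involutivity of the lifted $\alpha$-plane distribution along the real equator $\zeta\in\R$; since the obstruction $[\mathfrak{w}_j,\mathfrak{w}_k]\wedge\mathfrak{w}_1\wedge\cdots\wedge\mathfrak{w}_p$ is polynomial in $\zeta$ with real coefficients, its vanishing on $\R$ forces it to vanish identically. Holomorphicity in $\zeta$ likewise gives $[\partial_{\bar\zeta},\mathfrak{w}_j]=0$. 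Scale-independence is proved by the same trick: two choices of scale agree on $\zeta\in\R$, where $\mathcal{D}_h$ is tangent to lifted real $\alpha$-surfaces and hence defined without reference to a connection, so they agree for all $\zeta$.

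Your direct computation has two problems. First, the identity $[\tilde X,\tilde Y]=\widetilde{[X,Y]_M}-(\Omega^H(X,Y)h)^\sharp$ holds for horizontal lifts of vector fields on the base, but your $X=e_1\otimes h$ and $Y=e_2\otimes h$ depend on the fibre point $[h]$; their horizontal lifts are not sections of $\mathcal{D}$ away from the chosen point, and the bracket of genuine sections of $\mathcal{D}$ acquires extra terms from the $\zeta$-derivative of $h$. The clean split into conditions (i) and (ii) does not survive without tracking these. Second, Section~\ref{curvature} does not contain what you invoke: it concerns cohomological obstructions to extending formal neighbourhoods of a twistor line, not a Bianchi-type curvature decomposition. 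For $p>2$ the passage from $\tilde F=0$ to your condition (ii) needs the detailed Bailey--Eastwood curvature analysis, which you have not supplied. The paper's holomorphic-extension argument sidesteps all of this.
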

 In the special case of an oriented real Grassmannian of 2-planes, $M = \widetilde{Gr}(2, \R^{p+2})$, we obtain a global twistor correspondence for right-flat deformations of the canonical almost-Grassmannian structure. The twistor space turns out to be $\CP^{p+1}$, which follows from theorem (\ref{thm:A}) along with some topological considerations. To invert the construction, we rely on a regularity theory for holomorphic disks-with-boundary developed in \cite{leb05}. It is seen that right-flat almost-Grassmannian structures on $\widetilde{Gr}(2, \R^{p+2})$ naturally arise from moduli of holomorphic disks in $\CP^{p+1}$ with boundary along a totally-real submanifold.

\begin{theorem}
    There is a one-to-one correspondence between right-flat almost-Grassmannian structures on the oriented Grassmannian $\widetilde{Gr}(2, \R^{p+2})$ and smooth embeddings $\RP^{p+1}\hookrightarrow \CP^{p+1}$, at least near the standard ones. 
\end{theorem}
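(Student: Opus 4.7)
The plan is to build the correspondence in both directions and show they are mutually inverse near the standard example, following the LeBrun--Mason framework.

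\emph{Forward direction.} Starting from a right-flat $(p,2)$ almost-Grassmannian structure on $M = \widetilde{Gr}(2,\R^{p+2})$, Theorem \ref{thm:A} supplies an integrable distribution of complex $(p+1)$-planes on $\proj(H\otimes\C)$. Since $\dim_\R \proj(H\otimes\C) = 2p+2 = 2(p+1)$, this distribution actually spans the full tangent space, so Theorem \ref{thm:A} amounts to producing an integrable almost-complex structure; by Newlander--Nirenberg it promotes to a genuine complex manifold structure $Z$. For the standard data $Z \cong \CP^{p+1}$. For deformations near the standard structure, the Kodaira--Spencer stability theorem, combined with the rigidity $H^1(\CP^{p+1}, T\CP^{p+1}) = 0$, forces $Z \cong \CP^{p+1}$ as a complex manifold. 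The fiberwise fixed set of complex conjugation, $\proj(H) \subset \proj(H\otimes\C)$, is diffeomorphic to $\RP^{p+1}$; a direct check using the almost-complex structure exhibits it as totally real in $Z$, yielding the embedding.

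\emph{Inverse direction.} Given a totally real embedding $\iota: \RP^{p+1}\hookrightarrow \CP^{p+1}$ near the standard one, I would form the moduli space $\mathcal{M}$ of degree-one holomorphic disks in $\CP^{p+1}$ with boundary on $\iota(\RP^{p+1})$. For the standard $\iota$, each oriented real 2-plane in $\R^{p+2}$ complexifies to a $\CP^1 \subset \CP^{p+1}$ split by $\RP^{p+1}$ into two hemispheres, identifying $\mathcal{M} \cong \widetilde{Gr}(2,\R^{p+2})$. For perturbed $\iota$, the regularity theory developed in \cite{leb05} for holomorphic disks with totally real boundary implies $\mathcal{M}$ remains a smooth $2p$-manifold of the same diffeomorphism type. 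The tangent space $T_{[u]}\mathcal{M}$ is canonically the space of holomorphic sections of $u^*T\CP^{p+1}$ with real boundary values, and a Birkhoff-type decomposition splits this into $E_{[u]} \otimes H_{[u]}$ with $E$ of rank $p$ and $H$ of rank $2$. The resulting factorization is the almost-Grassmannian structure; right-flatness follows because its $\alpha$-surfaces are the one-parameter families of disks sharing a common boundary point.

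\emph{Mutual inverses and main obstacle.} On standard data the two constructions visibly agree. For nearby structures, both are smooth maps on suitable Banach deformation spaces whose linearizations at the standard example are mutually inverse isomorphisms; the implicit function theorem then promotes the local match to a local bijection. The principal obstacle is analytic: proving that $\mathcal{M}$ is smooth of the correct dimension for perturbed $\iota$ and that the Birkhoff splitting varies smoothly to give a genuine factorization of $T\mathcal{M}$. Both rely on the Fredholm and automatic transversality results of \cite{leb05}, where the degree-one hypothesis ensures the Cauchy--Riemann operator with totally real boundary condition has the expected index and is surjective. A secondary concern is that the complex manifold $Z$ in the forward construction really is $\CP^{p+1}$: Kodaira rigidity handles this once one establishes that $Z$ lies in a continuous family deforming from the standard twistor space, which is where the qualification ``near the standard ones'' does its essential work.
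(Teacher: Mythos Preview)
Your forward construction has a genuine gap. The distribution $\mathcal{D}$ of complex $(p+1)$-planes on $\mathscr{F}=\proj(H\otimes\C)$ does \emph{not} define an almost-complex structure on all of $\mathscr{F}$: along the equatorial $\RP^1$-subbundle $\proj(H)\subset\mathscr{F}$ the distribution is real, i.e.\ $\mathcal{D}|_{\proj(H)}=\bar{\mathcal{D}}|_{\proj(H)}$, so $\mathcal{D}\cap\bar{\mathcal{D}}\neq 0$ there and Newlander--Nirenberg does not apply. Your dimension count is misleading: $\mathcal{D}$ is a complex rank-$(p+1)$ subbundle of $T_\C\mathscr{F}$, which has complex rank $2p+2$, and the condition $\mathcal{D}\oplus\bar{\mathcal{D}}=T_\C\mathscr{F}$ fails exactly on the equator. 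Moreover, $\proj(H)$ is an $\RP^1$-bundle over the $2p$-dimensional Grassmannian, hence has real dimension $2p+1$; it is certainly not diffeomorphic to $\RP^{p+1}$.

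The paper resolves this by an additional collapsing step you have omitted. One restricts to the closed upper hemisphere bundle $\mathscr{F}_+$ (a disk bundle with $\partial\mathscr{F}_+=\proj(H)$), and along the boundary one quotients by the foliation of $\proj(H)$ by lifted $\alpha$-surfaces, i.e.\ by the real twistor map $q:\proj(H)\to\RP^{p+1}$. Only after this blow-down does one obtain a smooth $(2p+2)$-manifold $\Z$ on which $\Psi_*\mathcal{D}$ genuinely defines an integrable complex structure; the embedded $\RP^{p+1}$ is the image $\Psi(\partial\mathscr{F}_+)$, not $\proj(H)$ itself. The rigidity argument identifying $\Z\cong\CP^{p+1}$ then proceeds as you say. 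For the inverse direction, your sketch is broadly correct in spirit, though the paper obtains the tensor factorization not via a Birkhoff splitting of sections but by showing that the fibrewise image of $\mathscr{F}\to Gr(p,T_\C Q)$ under the Pl\"ucker embedding is a degree-$p$ rational normal curve, invoking a Maslov-index computation and the bijection between such curves and isomorphisms $\C^{2p}\cong\C^2\otimes\C^p$.
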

In the above correspondence, two almost-Grassmannian structures are considered to be equivalent if one is a pullback of the other by some diffeomorphism, and two embeddings of $\RP^{p+1}$ are equivalent if they differ by some reparameterization and/or biholomorphism of $\CP^{p+1}$. By simply counting parameters, it follows that there is an infinite dimensional family of non-trivial right-flat deformations. 
 Finally, in section 
\ref{holonomy} we construct a class of almost-Grassmannian manifolds with special holonomy. This is accomplished by solving the valence-2 twistor equation
\begin{equation}\label{eq:twistor2}
    \nabla_{A(A'} \omega_{B'C')} = 0
\end{equation}
whose solution, under a sort of special Lagrangian condition on the embedding $\RP^{2m+1} \hookrightarrow \CP^{2m+1}$, can be made real and parallel. 
\begin{theorem}
    Let $v$ be a divergence-free vector field on $\RP^{2m+1}$ with respect to the standard metric, and denote by $J$ the standard complex structure on $\CP^{2n+1}$. Then $Jv$ determines a family of embeddings $\RP^{2m+1} \hookrightarrow \CP^{2m+1}$ whose corresponding almost-Grassmannian structures are torsion-free and have holonomy group contained in $SO(2, R) \cdot SL(2m, R)$.
\end{theorem}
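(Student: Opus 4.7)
The natural strategy is to solve (\ref{eq:twistor2}) via a Penrose-type transform and use the real structure to extract a parallel, nondegenerate section of $\odot^2 H^*$, which then reduces the holonomy on the $H$-factor. First I would establish, as an extension of the Ward-type correspondence developed earlier in the paper, that on the right-flat almost-Grassmannian manifold $M$ associated to an embedding $\iota\colon \RP^{2m+1}\hookrightarrow \CP^{2m+1}$, holomorphic solutions of $\nabla_{A(A'}\omega_{B'C')}=0$ are in bijection with $H^0(\CP^{2m+1},\co(2))$, i.e.\ quadratic forms $Q$ on $\C^{2m+2}$. The mechanism is standard: $\co(2)$ on the twistor space pulls back to a line bundle on the correspondence space whose pushforward to $M$ is $\odot^2 H^*$, and the twistor equation is exactly the integrability condition for such a pulled-back section to descend. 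Any such $\omega$ is moreover automatically parallel for the canonical connection of the right-flat structure, the higher-valence analog of the fact that valence-$1$ twistors are covariantly constant.

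The delicate step is to determine which $Q$ yield \emph{real} $\omega$. A complex solution descends to a real section if and only if $Q$ restricts to an $\R$-valued quadratic form along the totally-real submanifold $\iota(\RP^{2m+1})\subset \CP^{2m+1}$. For the standard embedding $\iota_0$, the form $Q_0=\sum z_i^2$ is tautologically real along $\iota_0(\RP^{2m+1})$ and yields the standard round flat structure. Linearizing along a deformation $\iota_t=\iota_0+tJv+O(t^2)$, the condition for $Q_0\circ\iota_t$ to remain $\R$-valued to first order reduces to $dQ_0(Jv)\in i\R$ along $\iota_0(\RP^{2m+1})$; since $J$ acts as multiplication by $i$ on the normal bundle of $\iota_0$, this unwinds to $v$ being divergence-free in the standard round metric. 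The main obstacle, I expect, is pinning down this linearized reality computation cleanly and then propagating it to all orders in $t$---either by a formal power-series continuation, or by recognizing the admissible deformation space as the linear subspace of sections of the normal bundle cut out by the single linear divergence equation.

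With a real, nowhere-vanishing, parallel $\omega\in\Gamma(\odot^2 H^*)$ in hand (nondegeneracy for small $v$ is automatic by continuity from the standard case), the holonomy on $H$ is forced into $SO(2,\R)$. The intrinsic determinant identity $(\det E)^2\cong(\det H)^{2m}$ for a $(2m,2)$ almost-Grassmannian structure transports the parallel volume form on $H$ induced by $\omega$ to a parallel volume form on $E$, producing an $SL(2m,\R)$ reduction on the other factor; quotienting by the diagonal scalar ambiguity in $E\otimes H$ gives the claimed holonomy group $SO(2,\R)\cdot SL(2m,\R)$. Torsion-freeness of the canonical Cartan connection then follows from a general fact about $(p,2)$ almost-Grassmannian geometry: right-flatness already kills the only potentially nontrivial torsion component, so no additional argument is required beyond the hypotheses already in force.
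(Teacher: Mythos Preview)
Your approach diverges from the paper's and contains two genuine gaps.

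First, the assertion that solutions of the valence-$2$ twistor equation are ``automatically parallel'' is false. For $q=2$ one has
\[
\nabla_{CC'}\omega_{A'B'} \;=\; \nabla_{C(C'}\omega_{A'B')} \;+\; \text{terms proportional to } \epsilon_{C'(A'}\nabla_{C}{}^{D'}\omega_{B')D'},
\]
and the twistor equation kills only the first piece; the remaining trace is exactly the zero rest-mass field $\nabla_{C}{}^{D'}\omega_{B'D'}$, which is unconstrained. Your analogy with valence-$1$ twistors conflates two connections: a solution of the valence-$1$ equation is parallel for the \emph{local twistor connection} on the prolonged bundle $\tau$, not for the spin connection on $\co^A$. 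The paper closes this gap by running the transform in the opposite direction: it builds a valence-$2m$ field $\omega_{A'\cdots D'}$ as the fibrewise residue of the meromorphic volume form $\Omega=\nu/q^{m+1}$, which realises $\omega$ via the Penrose transform of $H^1(\CP^{2m+1},\co(-2m-2))$ and therefore satisfies the zero rest-mass equation automatically. The twistor equation is then proved separately by a direct computation with the tautological spinor $\pi^{A'}$ on the correspondence space, and only the conjunction of the two equations forces $\nabla\omega=0$; one then extracts the $m$th root to obtain the parallel valence-$2$ object.

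Second, your linearised reality computation does not recover the divergence-free condition. With $Q_0=\sum z_j^2$ and $v$ a vector field on $\RP^{2m+1}$ lifted to $S^{2m+1}$, one has $dQ_0(Jv)=2i\sum x_jv_j$ at real points, and this vanishes identically because $v$ is tangent to spheres. The condition ``$Q_0\circ\iota_t$ real to first order'' is therefore vacuous and cannot encode $\mathrm{div}(v)=0$. The paper's reality condition is imposed not on $Q_0$ but on the top-degree form $\Omega$: one requires that $\Omega$ restrict to a real volume form along $P$. Linearising along the flow of the holomorphic extension of $Jv+iv$ gives $\mathcal{L}_{Jv+iv}\Omega=\mathrm{div}(Jv+iv)\,\Omega$, and the imaginary part of this divergence along $\RP^{2m+1}$ is precisely $\mathrm{div}(v)$; that is where the hypothesis actually enters. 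So the relevant twistor datum is the canonical class, not $\co(2)$.

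Your remarks on torsion-freeness and on the $SL(2m,\R)$ reduction of the $E$-factor via the determinant compatibility are fine and agree with the paper.
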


\section{Almost-Grassmannian Structures}\label{aggeometry}
We begin by recalling the definition and properties of an almost-Grassmannian structure. In this section, $M$ is a complex manifold, not necessarily compact, of dimensions $n=pq$ with $p,q \geq 2$. 

\begin{definition}
A $(p,q)$ \emph{almost-Grassmannian structure} on $M$ is an isomorphism between its holomorphic tangent bundle and a tensor product,
\begin{equation}\label{eq:ag} 
    \sigma : TM \xrightarrow{\cong} \co^A \otimes \co^{A'},
\end{equation}
where the bundles $\co^A, \co^{A'}$ have ranks $p,q$ respectively. Additionally, there is chosen (local) isomrphism between their top exterior powers
\begin{equation}\label{eq:topext}
    \alpha : \wedge^p \co_A \xrightarrow{\cong} \wedge^q \co_{A'}.
\end{equation}
An \emph{almost-Grassmannian manifold} is a manifold equipped with an almost-Grassmannian structure. 
\end{definition}

When $p=q=2$, an almost-Grassmannian structure is equivalent to a conformal class of metric with a spin structure. In analogy with this 4-dimensional case, we will refer to the bundles $\co^A, \co^{A'}$ as the (un)primed spin bundles. We will further make use of abstract index notation, with lowercase indices representing tensorial quantities and uppercase indices representing spinorial ones. These indices are related by (\ref{eq:ag}); explicitly, the isomorphism is given by a section $\sigma_a{}^{AA'} \in \Gamma(T^*M \otimes \co^A \otimes \co^{A'})$, which can be used to interchange tensorial indices with spinor indices and vice versa. For instance, a vector field $X^a$ has spinor representation $X^{AA'} = \sigma_a{}^{AA'}X^a$. However, we will typically supress this notation and regard the indices $a$ and $AA'$ as completely identical. As usual, round and square brackets indicate symmetric and antisymmetric parts respectively. 

Any choice of connections on $\co^A, \co^{A'}$ determine a conection on $TM$ and thus a torsion $T_{ab}{}^c$. This is necessarily skew in $a,b$ and due to the decomposition 
\begin{equation}\label{eq:wedgedecomp}
    \wedge^2(V \otimes W) = (\wedge^2 V \oplus \odot^2 W) \oplus (\odot^2 V \oplus \wedge^2 W),
\end{equation}
there is a spinorial deomposition 
\[ 
    T_{ab}{}^c = F_{AA'BB'}{}^{CC'} + \tilde{F}_{AA'BB'}{}^{CC'}
\] 
where $F_{AA'BB'}{}^{CC'} = F_{(AB)[A'B']}{}^{CC'}, \tilde{F}_{AA'BB'}{}^{CC'} = \tilde{F}_{[AB](A'B')}{}^{CC'}$. In fact, the totally trace-free parts of $F$ and $\tilde{F}$ are independent of the original choice of connections. (A tensor is totally trace-free if all possible traces vanish.) The significance is that the totally trace-free parts of the torsion are therefore invariants of the almost-Grassmannian structure. 

\begin{definition}
    A \emph{scale} is a nonvanishing volume form on $\co^A$, i.e. a section $\epsilon \in \Gamma(\wedge^p \co_A)$. 
\end{definition}

The unprimed spin bundle is in no way preferred over the primed spin bundle; due to the isomorphism in (\ref{eq:topext}), we could equally well have chosen a volume form on $\co^{A'}$. Since (\ref{eq:topext}) is generally defined only locally, we will often specify the choice of a \emph{local scale}. Recall that in dimension 4, an almost-Grassmannian structure determines a conformal class of metrics. In this setting, a scale coincides with a particular choice of metric in that conformal class. For general almost-Grassmannian manifolds, there is no metric involved but we still have an 
 analog of the Levi-Civita connection.

\begin{theorem}[Bailey, Eastwood]
    For any scale $\epsilon$ on an almost-Grassmannian structure, there are unique connections on $\co^A, \co^{A'}$ such that the torsion quantities $F, \tilde{F}$ are totally trace-free, and
    \begin{equation}
        \nabla_a \epsilon = 0, \nabla_a \alpha(\epsilon) = 0.
    \end{equation}
\end{theorem}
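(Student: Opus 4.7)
The plan is to imitate the proof of the fundamental theorem of (pseudo-)Riemannian geometry. Choose any initial pair of connections $\nabla^{(0)}$ on $\co^A$ and $\co^{A'}$, inducing a connection on $TM$ with some torsion $T^{(0)}$. Parameterize all alternative pairs by a pair of endomorphism-valued one-forms $\Upsilon_{aB}{}^C$ and $\Upsilon'_{aB'}{}^{C'}$ according to
\begin{align*}
    \tilde{\nabla}_a X^C &= \nabla^{(0)}_a X^C + \Upsilon_{aB}{}^C X^B,\\
    \tilde{\nabla}_a Y^{C'} &= \nabla^{(0)}_a Y^{C'} + \Upsilon'_{aB'}{}^{C'} Y^{B'}.
\end{align*}
The goal is to show that exactly one such $(\Upsilon, \Upsilon')$ achieves all three conditions of the theorem.

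First I would handle the scale conditions. Because $\epsilon_{A_1 \cdots A_p}$ is totally antisymmetric, the Leibniz rule collapses to
\[ \tilde\nabla_a \epsilon_{A_1 \cdots A_p} = \nabla^{(0)}_a \epsilon_{A_1 \cdots A_p} - \Upsilon_{aB}{}^B\, \epsilon_{A_1 \cdots A_p}, \]
so the requirement $\tilde\nabla \epsilon = 0$ uniquely fixes the trace $\Upsilon_{aB}{}^B$ in terms of $\nabla^{(0)}\epsilon$. The identical argument applied to $\alpha(\epsilon)$ uniquely fixes $\Upsilon'_{aB'}{}^{B'}$. All remaining freedom lies in the trace-free parts $\mathring\Upsilon$ and $\mathring\Upsilon'$.

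Next I would translate the trace-free-torsion condition into a condition on $(\mathring\Upsilon, \mathring\Upsilon')$. The induced change of connection on $TM \cong \co^A \otimes \co^{A'}$ is
\[ \Phi_{BB'\,CC'}{}^{DD'} = \Upsilon_{BB'\,C}{}^D\, \delta_{C'}^{D'} + \delta_C^D\, \Upsilon'_{BB'\,C'}{}^{D'}, \]
with $\tilde T_{ab}{}^c - T^{(0)}_{ab}{}^c = 2\Phi_{[ab]}{}^c$. Decomposing this change under the splitting (\ref{eq:wedgedecomp}), one checks by a direct computation that a purely trace-free $(\mathring\Upsilon, \mathring\Upsilon')$ affects only the traceful parts of the resulting $F$- and $\tilde F$-summands. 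Thus the operation
\[ (\mathring\Upsilon, \mathring\Upsilon') \longmapsto \bigl(\text{traces of } F,\; \text{traces of } \tilde F\bigr) \]
is a linear map of finite-dimensional spaces, and the crux is to show it is an isomorphism: the domain and codomain should have matching dimension and the map should be injective. Once this is in hand, requiring $F$ and $\tilde F$ to be totally trace-free becomes a square linear system for $(\mathring\Upsilon, \mathring\Upsilon')$ with a unique solution.

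The principal obstacle is this last linear-algebra step: enumerating the independent traces of $F$ and $\tilde F$, pairing them with the available traces of $\Upsilon$ and $\Upsilon'$, and verifying square invertibility. Particular care is needed in the degenerate ranges $p=2$ or $q=2$, where one of $\wedge^2 \co^A, \wedge^2 \co^{A'}$ is a line bundle and several nominally-distinct trace components become equivalent or automatically vanish. Once this bookkeeping is discharged, the theorem follows immediately: the scale conditions fix the traces of $\Upsilon$ and $\Upsilon'$, the trace-free torsion conditions fix the trace-free parts, and together they single out the unique Bailey--Eastwood analogue of the Levi-Civita connection.
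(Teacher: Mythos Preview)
This theorem is not proved in the paper; it is stated with attribution to Bailey and Eastwood, and the reader is implicitly referred to \cite{be91} for the argument. There is therefore no in-paper proof to compare your proposal against.

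Your plan is the natural one and matches the standard approach. You have correctly isolated the crux: after the scale conditions fix the trace parts $\Upsilon_{aB}{}^B$ and $\Upsilon'_{aB'}{}^{B'}$, one must show that the residual linear map from $(\mathring\Upsilon, \mathring\Upsilon')$ to the trace components of $(F, \tilde F)$ is an isomorphism. You explicitly flag this as the principal obstacle and do not carry it out, so as a proof the proposal is incomplete; as a plan it is sound. The actual bookkeeping---enumerating the independent traces, matching dimensions, checking injectivity, and treating the low-rank cases $p=2$ or $q=2$ separately---is done in \cite{be91}.

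One small sharpening: your claim that a \emph{trace-free} pair $(\mathring\Upsilon, \mathring\Upsilon')$ affects only the traceful parts of $F$ and $\tilde F$ is weaker than what is true. As the paper remarks just before the theorem, the totally trace-free parts of $F$ and $\tilde F$ are invariants of the almost-Grassmannian structure itself, so \emph{any} change of spin connections---not just trace-free ones---leaves them untouched. This is because every term in $\Phi_{[ab]}{}^c$ carries a Kronecker delta on either the primed or unprimed side and hence contributes only to trace components. This does not damage your argument, but it simplifies the setup: you need not restrict to trace-free $(\Upsilon,\Upsilon')$ before asserting that only the traces of $F,\tilde F$ are affected.
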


A tangent vector $V^a$ is said to be \emph{null} if its spin representation is simple, i.e. if there exist $\mu^A, \nu^{A'}$ such that $V^{AA'} = \mu^A \nu^{A'}$. By fixing a primed spinor at a point and sweeping out the unprimed spinors, we obtain a null $p$-plane. Such a plane is called an $\alpha$-plane. An $\alpha$-surface is a submanifold whose tangent space at every point is an $\alpha$-plane. Similarly, fixing an unprimed spinor and sweeping through the primed ones yields a $\beta$-plane (and corresponding $\beta$-surfaces). 

\begin{definition}
    An almost-Grassmannian manifold is \emph{right-flat} if every $\alpha$-plane is tangent to some $\alpha$-surface. In this case, the space of $\alpha$-surfaces is called the \emph{twistor space}, which need not be a manifold. 
\end{definition}

\begin{theorem}[Bailey, Eastwood] \label{thm:rightflat}
    A $(p,q)$ almost-Grassmannian manifold is right-flat if and only if 
    \begin{align*}
        \tilde{F}_{AA'BB'}{}^{CC'} = 0 &~~\text{  for  }~~ p>2;\\
        \tilde{\Psi}_{A'B'C'}{}^{D'} = 0 &~~\text{  for  }~~ p = 2\text{\footnotemark}.    
    \end{align*}
    \footnotetext{The quantity $\tilde{\Psi}_{A'B'C'}{}^{D'}$ is a component of the curvature, which serves as the almost-Grassmannian analog of the anti-self-dual Weyl tensor in 4 dimensions. For a precise definition, see the appendix of \cite{be91}.}
\end{theorem}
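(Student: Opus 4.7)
The strategy is to translate right-flatness into an involutivity condition via Frobenius. Given a nonvanishing local section $\nu^{A'}$ of $\co^{A'}$ near a point $x_0$, consider the rank-$p$ distribution $\mathcal{D}_\nu \subset TM$ whose fiber is $\co^A \otimes \langle \nu^{A'}\rangle$. The $\alpha$-plane specified by $\nu^{A'}(x_0)$ is tangent to an $\alpha$-surface if and only if some extension $\nu^{A'}$ makes $\mathcal{D}_\nu$ involutive near $x_0$, so the plan is to identify the full tensorial obstruction to this involutivity.

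For $X^{CC'} = \mu^C \nu^{C'}$ and $Y^{CC'} = \lambda^C \nu^{C'}$ in $\mathcal{D}_\nu$, one expands $[X,Y] = \nabla_X Y - \nabla_Y X - T(X,Y)$ using the Bailey--Eastwood connection and projects to the quotient $TM/\mathcal{D}_\nu \cong \co^A \otimes (\co^{A'}/\langle\nu^{A'}\rangle)$. The $\nabla \mu^C$ and $\nabla \lambda^C$ contributions land in $\mathcal{D}_\nu$ and drop out, and the $F$-component of the torsion vanishes automatically since $F_{AA'BB'}{}^{CC'}$ is antisymmetric in $A'B'$ while $\nu^{A'}\nu^{B'}$ is symmetric. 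The residual obstruction, projected to $\co^{A'}/\langle\nu^{A'}\rangle$ in the $C'$ slot, is
\[
    (\mu^A \lambda^C - \lambda^A \mu^C)\, \nu^{A'}\, \overline{\nabla_{AA'} \nu^{C'}} \;-\; \tilde{F}_{AA'BB'}{}^{CC'}\,\mu^A\lambda^B\,\nu^{A'}\nu^{B'},
\]
and involutivity requires that this vanish for all $\mu^A$ and $\lambda^A$.

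When $p > 2$, the bivector $\mu^A \lambda^C - \lambda^A \mu^C$ ranges over the at least three-dimensional space of decomposable elements of $\wedge^2 \co^A$. Varying $\mu, \lambda$ and $\nu^{A'}(x_0)$ separates the algebraic types of the two terms above, and since the first is skew in $(A, C)$ while the torsion term is not, one extracts the totally trace-free part of $\tilde F$ as a pointwise obstruction independent of the first jet of $\nu^{A'}$, yielding the ``only if'' direction. Conversely, when $\tilde F = 0$, the residual equation $(\mu^A \lambda^C - \lambda^A \mu^C)\nu^{A'}\overline{\nabla_{AA'}\nu^{C'}} = 0$ is a Pfaffian system on $\nu^{A'}$ whose compatible extensions can be produced by lifting to a twistor distribution on $\proj(\co^{A'})$ and checking Frobenius there, in the spirit of Theorem \ref{thm:A}.

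The delicate case is $p = 2$, which I expect to be the main obstacle. Here $\wedge^2 \co^A$ is one-dimensional and $\mu^A \lambda^C - \lambda^A \mu^C$ is a scalar multiple of $\epsilon^{AC}$, so the torsion contribution collapses to a single algebraic condition that can be absorbed into the first jet of $\nu^{A'}$ by an appropriate choice of extension. Consequently $\tilde F$ is no longer a pointwise obstruction, and one must prolong the resulting first-order PDE on $\nu^{A'}$ and apply the Ricci identity to expose a second-order integrability condition on the curvature. Using the Bianchi identities from the appendix of \cite{be91}, this reduces to the vanishing of $\tilde{\Psi}_{A'B'C'}{}^{D'}$, mirroring the role of the anti-self-dual Weyl tensor in the Penrose nonlinear graviton construction. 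The converse is again handled by realizing $\alpha$-surfaces as projections of the integral leaves of the twistor distribution on $\proj(\co^{A'})$, whose Frobenius integrability is equivalent to the vanishing of the same curvature component.
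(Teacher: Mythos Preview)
The paper does not prove this theorem: it is stated with attribution to Bailey and Eastwood and a reference to \cite{be91}, and no argument is supplied. There is therefore nothing in the present paper to compare your proposal against.

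That said, your Frobenius-based strategy is the standard one and is essentially what Bailey and Eastwood do in the cited reference. A couple of points where the sketch is imprecise. In the $p>2$ case, the separation you invoke---``the first is skew in $(A,C)$ while the torsion term is not''---is not quite the right dichotomy: since $\tilde F$ is already skew in $AB$, both terms depend on $\mu,\lambda$ only through the bivector $\mu^{[A}\lambda^{B]}$. The genuine structural difference is that in the derivative term the free unprimed index $C$ sits on the bivector itself, whereas in the torsion term $C$ is a separate slot on $\tilde F$; for $p>2$ this discrepancy in index placement is what lets you isolate the totally trace-free part of $\tilde F$ as a pointwise obstruction that no choice of the $1$-jet of $\nu^{A'}$ can absorb. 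Your conclusion is correct, but the stated reason needs sharpening. For $p=2$ your outline (prolong the first-order system, apply the Ricci identity, and reduce via the Bianchi relations to $\tilde\Psi$) is the right shape, though the phrase ``I expect to be the main obstacle'' flags that the computation is not actually carried out. In \cite{be91} this step is streamlined by working from the outset with the lifted rank-$p$ distribution on $\proj(\co^{A'})$ rather than with a local section $\nu^{A'}$ on $M$; the prolongation is then built into the geometry, and the obstruction tensor drops out directly from the bracket computation on the correspondence space.
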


The key example of an almost-Grassmannian manifold is the Grassmannian $Gr(q, \C^{p+q})$. It carries a canonical $(p,q)$ almost-Grassmannian structure
\[
    TGr(q, \C^{p+q}) = Hom(\gamma, \C^{p+q}/\gamma) =  (\C^{p+q}/\gamma) \otimes \gamma^*
\] 
where $\gamma$ is the tautological $q$-plane bundle. The isomorphism between top exterior powers is induced by the standard volume form on $\C^{p+q}$. In fact, the Grassmannian with its canonical almost-Grassmannian structure is right-flat, and its $\alpha$-surfaces are all diffeomorphic to $\CP^p$. We may refer this manifold as the \emph{standard Grassmannian}, or \emph{flat model}.\\

We now discuss deformations of almost-Grassmannian structures. Recall that the spin bundles come equipped with connections (unique up to choice of scale), and that these determine a connection on $TM$. By abuse of notation, we will denote all connections by the same symbols $\nabla_a$. More explicitly, 
\begin{equation}
    \nabla_a X^b = \sigma_{BB'}{}^{b}(1 \otimes \nabla a + \nabla_a \otimes 1)(\sigma_c{}^{BB'}X^c).
\end{equation}
Any other almost-Grassmannian structure is given by some $\hat{\sigma}_a{}^{AA'}$, or equivalently an isomorphism of the tangent bundle $\Phi_a{}^b = {\sigma}_a{}^{AA'} \hat{\sigma}_{AA'}{}^b$. A deformation is therefore given by a 1-parameter family of isomorphisms throgh the identity. We can write this as a power series expansion
\[
    \Phi_a{}^b = \delta_a{}^b + t \phi_a{}^b + O(t^2),
\]
and similarly, a general change in spin connections is given by 
\begin{align*}
    \hat{\nabla}_a \mu^C &= \nabla_a \mu^C + t K_{aB}^C \mu^B + O(t^2)\\
    \hat{\nabla}_a \mu^{C'} &= \nabla_a \mu^{C'} + t \tilde{K}_{aB'}{}^{C'} \mu^{B'} + O(t^2).
\end{align*}
Setting $Q_{ab}^c = K_{aB}{}^C \delta_{B'}{}^{C'} + \tilde{K}_{aB'}{}^{C'} \delta_B {}^C$, the torsion of the induced connection on $TM$ is 
\begin{equation}\label{eq:torsiondef}
    \hat{T}_{ab}{}^c = T_{ab}{}^c - t(\nabla_{[a} \phi_{b]}{}^c + Q_{[ab]}{}^c) + O(t^2).
\end{equation}
The contorsion tensors $K, \tilde{K}$ are thus determined up to scale by the condition that $\nabla_{[a} \phi_{b]}{}^c + Q_{[ab]}{}^c$ is totally trace-free, i.e. $Q$ is exactly the trace part of $\nabla_{[a} \phi_{b]}{}^c$. Equation (\ref{eq:torsiondef}) implies that the torsion is constant, up to first order, precisely when $\nabla_{[a} \phi_{b]}{}^c$ is pure trace. We can therefore characterize the linearized right-flat deformations by solutions of the equation 
\begin{equation}\label{eq:tracefree}
    \text{Trace-free part of } \{ \nabla_{[a} \phi_{b]}{}^c\} = 0.
\end{equation}

The case when $q=2$ is particularly important, because then the $F$ component of the torsion automatically vanishes. 
For the same reason, so does the part of equation (\ref{eq:tracefree}) that is skew in $A'B'$. Thus, right-flat deformations of a $(p,2)$ almost-Grassmannian structure are given by solutions to
\begin{equation}
    \text{Trace-free part of } \{ \nabla_{[A|(A'} \phi_{B')|B]}{}^{CC'}\} = 0.
\end{equation}
In this case, points in the spacetime manifold correspond via the double fibration to embedded $\CP^1$'s in the twistor space. We will refer to these embedded curves as \emph{twistor lines}. If, in addition, $p > 2$, then theorem (\ref{thm:rightflat}) states the right-flat condition is equivalent to the vanishing of $\tilde{F}$. Thus a right-flat $(p,2)$ almost-Grassmannian manifold with $p>2$ is torsion-free.

\section{Local Aspects of Almost-Grassmannian Manifolds}\label{local}



For any $(p,2)$ almost-Grassmannian manifold, the \emph{local twistor bundle} $\tau$ is defined as an extension
\begin{equation}\label{eq:localtwistor}
    0 \rightarrow \co_{A'} \rightarrow \tau \rightarrow \co^A \rightarrow 0,
\end{equation}
which is trivialized by any (local) choice of scale. In a choice of scale, a section of the local twistor bundle is thus represented by a pair of spinors $(\omega^A, \pi_{A'})$ which transforms under rescaling $\epsilon \mapsto f \epsilon$ by
\begin{equation}\label{eq:localtwistortransform}
    \begin{pmatrix}
        \hat{\omega}^A \\ 
        \hat{\pi}_{A'} 
    \end{pmatrix} = \begin{pmatrix} 
        \omega^A \\
        \pi_{A'} - \Upsilon_{AA'} \omega^A 
    \end{pmatrix}
\end{equation}
where $\Upsilon_{AA'} = f^{-1} \nabla_{AA'} f$. This bundle carries a natural $SL(p+q)$-invariant connection given by 
\begin{equation}\label{eq:twistortransport}
    D_{AA'} \begin{pmatrix}
        {\omega}^B \\ 
        {\pi}_{B'} 
    \end{pmatrix} = \begin{pmatrix}
        \nabla_{AA'}{\omega}^B + \delta_A{}^B \pi_{A'}\\ 
        \nabla_{AA'}{\pi}_{B'} - P_{AA'BB'} \omega^B 
    \end{pmatrix}
\end{equation}
where $P$ is a curvature tensor depending on $\nabla$. For a precise definition see []; the important feature is that $P$ transforms under change of scale by
\[
    P_{AA'BB'} \to P_{AA'BB'} - \nabla_{AA'}\Upsilon_{BB'} + \Upsilon_{AB'}\Upsilon_{BA'}.
\]
This connection is contrived from the twistor equation $\nabla_{A(A'}\omega_{B')} = 0$, such that parallel sections of the local twistor bundle correspond to solutions of the twistor equation. This construction is rather ad hoc, and we will first provide a twistor characterization of this bundle in the right-flat $(p,2)$ case.\\

To set this up, we must introduce the Ward correspondence \cite{war77, pr86}. 
The Ward correspondence assigns to every holomorphic vector bundle on the twistor space, which is additionally trivial along each twistor line, a vector bundle with connection on $M$ that is flat along $\alpha$-surfaces. The basic idea is that given such a holomorphic vector bundle $\mathscr{V} \to \Z$, one can define a bundle on $M$ whose fiber at a point $x$ is the space of sections $\Gamma(L_x, \mathscr{V})$. This provides  natural identification of the fibers along a single $\alpha$-surface with the fiber over the corresponding point in $\Z$, which effectively provides parallel transport over $\alpha$-surfaces. Under mild hypotheses, this corresponence is one-to-one and we can give an alternate characterization in the reverse direction: starting with a vector bundle on $M$ that is flat along $\alpha$-surfaces, there is a holomorphic vector bundle over $\Z$ whose fiber is the space of covariantly constant sections along the corresponding $\alpha$-surface. 

\begin{definition}
    An \emph{auto-parallel spinor} is a parallel section $\pi_{A'} \in \Gamma(\Sigma, \co_{A'})$ on some $\alpha$-surface $\Sigma$, such that $v^{AA'}\pi_{A'} = 0$ for any $v$ tangent to $\Sigma$.
\end{definition}

 Denote by $\co(-1)$ the bundle on $\Z$ whose fiber at a point is the space of autoparallel spinors along the corresponding $\alpha$-surface. This is in fact a line bundle, because the condition $v^{AA'}\pi_{A'} = 0$ algebraically determines $\pi_{A'}$ at any point, and this can be propagated across $\alpha$-surfaces in a consistent manner because they are simply connected. We can then consider the 1-jet bundle $J^1\co(-1)$, whose fiber at $z$ is defined to be equivalence classes of germs of sections of $\co(-1)$, where two germs are equivalent if their first derivatives at $z$ coincide. 

We will now relate the 1-jet bundle with the tangent bundle of the original vector bundle. Let $L$ denote the total space of $\co(-1) - \textbf{0}$, where \textbf{0} denotes the zero section. Then $L$ is a smooth manifold in its own right, and has a tangent bundle $TL \to L$. The space $L$ also comes with a $\C^*$ action given by rescaling the fibers, such that $L/\C^* \cong \Z$. Each choice of $\zeta \in \C^*$ has a Jacobian so that we have an induced action on $TL$. The quotient is a rank $m+2$ bundle that we denote by $\mathscr{L}$. The situation is summarized in the diagram below. 
\begin{center}
    \begin{tikzcd}
        TL\arrow{d} \arrow{r}  &[-15pt] TL/\C^* \arrow{d} &[-30pt] = &[-30pt] \mathscr{L} \arrow{d} \\
        L \arrow{r} & L / \C^* &\cong& \Z
    \end{tikzcd}
\end{center}

\begin{lemma}
    There is a natural isomorphism $J^1\co(1) \cong \mathscr{L}^* \otimes \co(1)$.
\end{lemma}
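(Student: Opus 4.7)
The plan is to realize both bundles as extensions of $\co(1)$ by $T^*\Z \otimes \co(1)$, produce a morphism between the corresponding short exact sequences that is the identity on the outer terms, and conclude by the five lemma. The first sequence is the standard jet sequence
\[
0 \to T^*\Z \otimes \co(1) \to J^1\co(1) \to \co(1) \to 0.
\]
The second is obtained by dualizing and twisting by $\co(1)$ the Atiyah sequence
\[
0 \to \co_\Z \to \mathscr{L} \to T\Z \to 0
\]
associated to the principal $\C^*$-bundle $L \to \Z$, whose left inclusion is generated by the Euler vector field $\mathcal{E}$ of the $\C^*$-action (locally $\zeta\partial_\zeta$) and whose right surjection is induced by $\pi_* : TL \to \pi^*T\Z$.

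The middle map $\Phi : J^1\co(1) \to \mathscr{L}^* \otimes \co(1)$ is defined by exterior differentiation on $L$. A section $\sigma$ of $\co(1)$ determines a holomorphic function $\tilde\sigma : L \to \C$, weight-$1$ homogeneous under $\C^*$, via the natural pairing of $\co(1) = \co(-1)^*$ with the tautological element in each fiber of $L$. Because $\tilde\sigma$ has weight $1$, the differential $d\tilde\sigma$ is a 1-form on $L$ satisfying $\lambda^* d\tilde\sigma = \lambda \, d\tilde\sigma$, so it descends to a section of $\mathscr{L}^* \otimes \co(1)$. Since $\sigma \mapsto d\tilde\sigma$ is a first-order differential operator, the universal property of jets factors it through the desired bundle map $\Phi$.

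To close the argument, I would verify that $\Phi$ commutes with both the projections to $\co(1)$ and the inclusions of $T^*\Z \otimes \co(1)$. For the projection: the quotient $\mathscr{L}^* \otimes \co(1) \to \co(1)$ is given by pairing with $\mathcal{E}$, and $\iota_\mathcal{E} d\tilde\sigma = \mathcal{E}(\tilde\sigma) = \tilde\sigma$ by weight-$1$ homogeneity, so $\Phi(j^1\sigma)$ has $\co(1)$-projection equal to $\sigma$. For the inclusion: when $\sigma$ vanishes at $z$, the function $\tilde\sigma$ vanishes along the entire fiber $L_z$, so $d\tilde\sigma|_{L_z}$ kills the vertical tangent bundle and descends to the intrinsically-defined differential $d\sigma \in T_z^*\Z \otimes \co(1)_z$, which agrees with the inclusion into $\mathscr{L}^* \otimes \co(1)$. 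The five lemma then yields the isomorphism, and naturality is automatic from the coordinate-free construction.

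The main technical point to be careful about is the descent claim: a holomorphic 1-form $\alpha$ on $L$ with $\lambda^* \alpha = \lambda^k \alpha$ corresponds to a section of $\mathscr{L}^* \otimes \co(k)$. This is the general equivalence between $\C^*$-equivariant $\co_L$-modules and graded $\co_\Z$-modules, and if desired it can be checked concretely by computing transition functions in local trivializations of $\co(-1)$ and matching the cocycle of weight-$k$ invariant 1-forms with that of $\mathscr{L}^* \otimes \co(k)$.
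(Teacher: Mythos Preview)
Your proof is correct, and the underlying map is the same one the paper uses: the paper interprets sections of $\co(1)$ as degree-$1$ homogeneous functions on $L$, sections of $\mathscr{L}$ as $\C^*$-invariant vector fields on $L$, and pairs them by $(X,f)\mapsto Xf$ to obtain a map $\co(\mathscr{L})\otimes J^1\co(1)\to\co(1)$, which it then simply asserts is a perfect pairing. Your $\sigma\mapsto d\tilde\sigma$ is the adjoint formulation of this same pairing. The only difference is in the verification: where the paper leaves nondegeneracy as a bare assertion, you unpack it via the jet and Atiyah sequences and the five lemma, making explicit the two ingredients (Euler's identity $\iota_{\mathcal{E}}d\tilde\sigma=\tilde\sigma$ for the zeroth-order part, and the symbol map for the first-order part) that the paper's ``perfect pairing'' claim implicitly rests on. So this is not a genuinely different route, but a more detailed execution of the same one.
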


\begin{proof}
    A section of $\co(1)$ can be interpreted as a function on $L$ that is homogeneous of degree 1 along the fibers. Similarly, a section of $\mathscr{L}$ is a vector field on $L$ that is invariant under the $\C^*$ action, i.e. also homogeneous of degree 1 along the fibers. We therefore have a differentiation map $\co({\mathscr{L}}) \otimes \co(1) \to \co(1)$, which is a perfect pairing of $\co$-modules when restricted to $\co({\mathscr{L}}) \otimes J^1\co(1)$. 
\end{proof}

\begin{theorem}\label{thm:ward}
    The Ward correspondence takes the dualized jet bundle $[J^1\co(1)]^*$ to the local twistor bundle.
\end{theorem}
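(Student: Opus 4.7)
The plan is to realize $[J^1\co(1)]^*$ via the Penrose transform $\mu_*\nu^*$ on the double fibration $M \xleftarrow{\mu} F \xrightarrow{\nu} \Z$ (with $F$ the projectivized primed spin bundle) and identify the outcome with $\tau$ both as an extension and with its natural connection.

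First I would dualize the standard jet exact sequence on $\Z$ to obtain
\[
0 \to \co(-1) \to [J^1\co(1)]^* \to T\Z \otimes \co(-1) \to 0,
\]
which agrees, via the preceding lemma, with the Atiyah sequence of $\co(-1)$ tensored by $\co(-1)$. On a twistor line $L_x = \proj(\co^{A'}|_x) \cong \CP^1$ the bundle $\co(-1)$ is the tautological line bundle of degree $-1$, with vanishing $H^0$ and $H^1$. Hence the Penrose transform of the sequence collapses to $\mu_*\nu^*[J^1\co(1)]^* \cong \mu_*\nu^*\bigl(T\Z \otimes \co(-1)\bigr)$.

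Next I would analyze the right-hand side using the normal bundle sequence $0 \to TL_x \to T\Z|_{L_x} \to N_{L_x/\Z} \to 0$. In the right-flat $(p,2)$ case, Kodaira's identification $H^0(L_x, N_{L_x/\Z}) \cong T_xM$ together with the tautological quotient on $\proj(\co^{A'})$ shows that, on $F$, $N_{L_x/\Z} \cong \mu^*\co^A \otimes \co(1)$, uniformly of type $\co(1)^{\oplus p}$ on each twistor line. Twisting by $\co(-1)$ and pushing forward along $\mu$ (using $H^1(\co(1))=0$ on $\CP^1$) yields a short exact sequence on $M$,
\[
0 \to \co_{A'} \to \mu_*\nu^*[J^1\co(1)]^* \to \co^A \to 0,
\]
in which the sub arises from $H^0(L_x, \co(1)) \cong \co_{A'}|_x$ and the quotient from $H^0(L_x, N_{L_x/\Z}\otimes\co(-1)) \cong \co^A|_x$. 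A splitting of this extension depends on the local trivialization of $\co(-1)$, i.e., on the choice of scale $\epsilon$; tracking how it transforms under $\epsilon \mapsto f\epsilon$ recovers the transformation rule (\ref{eq:localtwistortransform}), so this bundle is isomorphic to $\tau$ as an extension.

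Finally I would match the Ward connection with the local twistor connection $D_{AA'}$. By construction the Ward connection is flat along $\alpha$-surfaces, and its parallel sections arise from local holomorphic sections of $[J^1\co(1)]^*$ on $\Z$, i.e.\ from $1$-jets of local sections of $\co(1)$. Via the Penrose transform, local sections of $\co(1)$ are precisely the solutions of the twistor equation $\nabla_{A(A'}\omega_{B')} = 0$, which in turn are the parallel sections of $D_{AA'}$ by the very construction of $\tau$. The main obstacle will be arguing that this common space of parallel sections suffices to pin down the connection uniquely; an alternative, more calculational route is to unwind the Ward construction in a local scale and verify directly that the resulting connection reproduces the curvature term $P_{AA'BB'}\omega^B$ in (\ref{eq:twistortransport}).
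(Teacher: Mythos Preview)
Your proposal is correct in outline but takes a genuinely different route from the paper. The paper uses the identification $[J^1\co(1)]^*\cong\mathscr{L}\otimes\co(-1)$ from the preceding lemma and then works \emph{geometrically} with $\mathscr{L}$: an element of the fiber is represented by an infinitesimal variation of a pair $(\Sigma,\pi_{A'})$, encoded as $(\omega^A,\eta_{A'})$ where $\omega^A=J^{AA'}\pi_{A'}$ for a connecting vector field $J$ and $\eta_{A'}=J^{BB'}\nabla_{BB'}\pi_{A'}$. A lemma of Bailey--Eastwood characterizing $T^{1,0}\Z$ in terms of a trace-free equation on $\alpha$-surfaces then yields the twistor-type condition on $\omega^A$, after which parallelism of $(\omega^A,\eta_{A'})$ under local twistor transport along the $\alpha$-surface is a direct calculation. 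Your approach is instead cohomological: dualize the jet sequence, feed the resulting extension and the normal-bundle sequence through $\mu_*\nu^*$, and read off the extension $0\to\co_{A'}\to\tau\to\co^A\to 0$ from standard vanishing on $\CP^1$. This is cleaner and more systematic for the bundle identification, and avoids the explicit connecting-vector-field picture entirely.

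The trade-off shows up exactly where you flag it: the connection. The paper's hands-on description makes the verification of local twistor transport along $\alpha$-surfaces immediate, because the pair $(\omega^A,\eta_{A'})$ is already written in the form the transport equations (\ref{eq:twistortransport}) want. Your argument via parallel sections is in the right spirit but is stated a bit loosely: parallel sections of the Ward bundle along a single $\alpha$-surface are identified with the \emph{fiber} of $[J^1\co(1)]^*$ at the corresponding point of $\Z$, not with local holomorphic sections over an open set, and the link to the twistor equation goes through that fiberwise description rather than through a global Penrose transform of $\co(1)$. If you tighten that step (or take your alternative calculational route and recover the $P_{AA'BB'}\omega^B$ term directly), your argument goes through.
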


\begin{proof}
    According to the previous lemma, we can instead consider $\mathscr{L} \otimes \co(-1)$. We will show that the elements of this space are in the required correspondence with parallel sections of the local twistor transport. Begin with a point $(\Sigma, \pi_{A'}) \in L$ consisting of an $\alpha$-surface and an auto-parallel spinor. Now consider a 1-parameter family of such points. The derivative of this family can be represented by another pair, one component to measure how $\Sigma$ varies and the other for $\pi_{A'}$. The first component can be denoted by $J^{AA'}$, a vector field along $\Sigma$ connecting it to an infinitesimally separated one. Note that $J^{AA'}$ is only well-defined up to a tangential component, which we can eliminate by contracting with the autoparallel spinor $\pi_{A'}$. The contraction $\omega^A = J^{AA'}\pi_{A'}$ therefore exactly encodes how $\Sigma$ varies, and then $\eta_{A'} = J^{BB'} \nabla_{BB'} \pi_{A'}$ describes the derivative of $\pi_{A'}$. The derivative of our 1-parameter family is thus given by $(\omega^A, \eta_{A'})$. In the case that $\Sigma$ does not vary, so that $J$ and $\omega$ are zero, the 1-parameter family is (to first order) a simple rescaling of $\pi_{A'}$. Then the derivative is $(0, \eta_{A'})$ where $\eta_{A'}$ is proportional to $\pi_{A'}$. 

    Observe that in either case the $\C^*$ action $\pi_{A'} \to \zeta \pi_{A'}$ induces $(\omega^A, \eta_{A'}) \to (\zeta \omega^A, \zeta \eta_{A'})$. An element of $\mathscr{L}$ is then an equivalence class $[(\omega^A, \eta_{A'})]$ under this $\C^*$ action, and the twist by $\co(-1)$ effectively cancels the $\C^*$ action. We can therefore represent elements in $\mathscr{L} \otimes \co(-1)$ by pairs $(\omega^A, \pi_{A'})$. 

    All that remains is to interpret $(\omega^A, \pi_{A'})$ in terms of the geometry on $M$. Recall that $\omega^A$ is the contraction of a connecting vector field with an autoparallel spinor $\pi_{A'}$. But connecting vector fields are just tangent vectors on $\Z$, which have the following characterization: 

    \begin{lemma}[Bailey, Eastwood]
        The fibers of $T^{1,0}\Z$ are naturally isomorphic to the space of solutions $\xi$ to the equation 
        \begin{equation}
            \text{Trace-free part of } (\pi^{A'}\pi^{B'} \nabla_{BB'} \xi_{A'}{}^A) = 0
        \end{equation}
        along the corresponding $\alpha$-surfaces, where $\pi^{A'}$ generates that $\alpha$-surface. 
    \end{lemma}

    We can choose $\pi^{A'}$ in this lemma to be dual to our auto-parallel spinor $\pi_{A'}$. It then follows immediately that $\omega^A$ solves 
    \begin{equation}
        \text{Trace-free part of } (\pi^{B'} \nabla_{BB'} \omega^A) = 0.
    \end{equation}
    Using this equation, it is a straightforward calculation that $(\omega^A, \eta_{A'})$ is parallel under local twistor transport along the corresponding $\alpha$-surface. 
\end{proof}

The local twistor bundle will be a crucial tool in our study of the local geometry. To motivate our first application, recall the following theorem in conformal geometry. 

\begin{theorem}[Weyl]
    Let $(M, [g])$ be a conformal manifold of dimension at least 3. Then the Weyl tensor $W$ vanishes if and only if $M$ is locally conformally flat. 
\end{theorem}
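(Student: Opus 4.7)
The plan is to use the local twistor bundle as the primary tool, exploiting the fact that its curvature encodes the conformal curvature data. The forward direction is immediate: the Weyl tensor is a conformal invariant, and a direct computation on the flat model yields $W=0$, so any locally conformally flat manifold automatically satisfies $W=0$.

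For the reverse direction, I would first compute the curvature of the local twistor connection $D_{AA'}$ from (\ref{eq:twistortransport}). A direct calculation of $[D_{AA'}, D_{BB'}]$ applied to a local twistor $(\omega^C, \pi_{C'})$ shows that the resulting obstruction decomposes into blocks constructed from the self-dual and anti-self-dual Weyl spinors, together with the Cotton--York tensor $\nabla_{[a} P_{b]c}$. Assuming $W=0$, the Weyl blocks vanish, and the second Bianchi identity (in dimension at least four) forces the Cotton--York tensor to vanish as well, so $D$ is flat.

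With $D$ flat, the rank-four local twistor bundle admits locally a maximal four-dimensional space of parallel sections, which by construction correspond to solutions of the twistor equation $\nabla_{A(A'}\omega_{B')}=0$. Since $D$ is the $SL(4)$-invariant connection associated to the conformal geometry (as emphasized following (\ref{eq:twistortransport})), its flatness is equivalent to flatness of the underlying normal Cartan connection, and by standard Cartan-geometric principles this implies local isomorphism with the flat model $Gr(2,\C^4)$. Concretely, a basis of parallel local twistors trivializes $\tau$, and the dual trivialization identifies a neighborhood of any point in $M$ with an open subset of the flat Grassmannian; the conformal structure pulls back correctly thanks to the equivariance of the construction under the gauge law (\ref{eq:localtwistortransform}).

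The main obstacle is the dimension-three case: there $W$ vanishes identically while the genuine obstruction is the Cotton tensor, so the theorem must really be read in dimension at least four (or with $W$ reinterpreted as Cotton in dimension three). Within dimension $\geq 4$, the Bianchi absorption step makes $W=0$ sufficient, and the remaining steps go through without serious difficulty. A secondary technical point is verifying that the identification with the flat model actually respects the conformal structure rather than just the smooth structure; this requires tracking the scale carefully and checking that the chosen basis of parallel twistors is in sufficiently general position at the basepoint, which follows from the freedom in prescribing initial conditions for a flat connection.
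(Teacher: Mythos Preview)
The paper does not actually prove this theorem; it is quoted as the classical result of Weyl (with citation) purely to motivate the almost-Grassmannian analog, Theorem~\ref{thm:flat}. There is therefore no paper proof to compare against directly. That said, your strategy closely parallels the paper's proof of Theorem~\ref{thm:flat}: argue that the hypothesis forces the local twistor connection to be flat, and then use a basis of parallel sections to build a local identification with the flat model. In that analog proof the paper makes the last step concrete via Lemma~\ref{lem:zeroset}, showing that $\alpha$-surfaces are carried to standard $\alpha$-surfaces, which is more explicit than your appeal to ``equivariance of the construction'' but amounts to the same idea.

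There is, however, a genuine scope gap in your argument. You invoke the rank-four local twistor bundle and its $SL(4)$-invariant connection, which is the spinor formulation specific to dimension four (the $p=q=2$ almost-Grassmannian case). The Weyl theorem as stated concerns conformal manifolds of arbitrary dimension at least three, and outside dimension four there is no factorization $TM\cong \co^A\otimes\co^{A'}$ to work with, so the object you are computing the curvature of simply does not exist. The correct replacement in general dimension is the standard conformal tractor bundle of rank $n+2$ with its normal connection; your outline then goes through essentially verbatim with tractors in place of local twistors, and the Bianchi step you describe is exactly what handles dimension $\geq 4$. As written, your proof only covers the four-dimensional case, and your own caveat about dimension three already excludes that, so the argument needs the tractor reformulation to match the stated generality.
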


This theorem is so useful that it is often taken as a definition. There is an almost-Grassmannian analog of this theorem, first proposed in \cite{be91} but never proven. Let us first establish a small lemma before proving the theorem. 

\begin{lemma}\label{lem:zeroset}
    Let $(\omega^A, \pi_{A'})$ be a nonzero parallel section of the local twistor bundle. Then the zero set $\{\omega^A = 0\}$ is an $\alpha$-surface. If the local twistor connection is flat, then every $alpha$-surface arises in this way.
\end{lemma}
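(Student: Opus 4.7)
The plan is to read the structure of $\Sigma := \{x : \omega^A(x) = 0\}$ directly off the parallel-transport equation (\ref{eq:twistortransport}). If $(\omega^A, \pi_{A'})$ is a nonzero parallel section, then
\[
\nabla_{AA'}\omega^B = -\delta_A{}^B\, \pi_{A'}, \qquad \nabla_{AA'}\pi_{B'} = P_{AA'BB'}\, \omega^B
\]
hold globally. Since the section is nonzero and parallel, at any $x \in \Sigma$ we must have $\pi_{A'}(x) \neq 0$ (otherwise uniqueness of parallel transport would force the section to vanish identically). Contracting a tangent vector $v^{AA'} \in T_x\Sigma$ against the first equation gives $0 = v^{AA'}\nabla_{AA'}\omega^B = -v^{BA'}\pi_{A'}$. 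Because the primed spin space is 2-dimensional, the annihilator of $\pi_{A'}$ inside $\co^{A'}|_x$ is the line spanned by $\pi^{A'} := \epsilon^{A'B'}\pi_{B'}$, so every $v^{AA'} \in T_x\Sigma$ factors as $\mu^B\pi^{A'}$; thus $T_x\Sigma$ lies inside the $\alpha$-plane generated by $\pi^{A'}(x)$.

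To see that $T_x\Sigma$ is the full $\alpha$-plane, observe that the same equation $\nabla\omega = -\pi \otimes \mathrm{id}$ makes the differential $d\omega : T_xM \to \co^A|_x$ surjective at every point of $\Sigma$ (since $\pi_{A'} \neq 0$ admits a dual $\hat\pi^{A'}$ with $\hat\pi^{A'}\pi_{A'} = 1$, and then $v^{BA'} = -\mu^B\hat\pi^{A'}$ hits any prescribed $\mu^B$). So $\Sigma$ is smooth of codimension $p$ and thus of dimension $p$, which matches the dimension of an $\alpha$-plane. Therefore $T_x\Sigma$ coincides with the $\alpha$-plane generated by $\pi^{A'}(x)$ at every point, and $\Sigma$ is an $\alpha$-surface.

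For the converse, assume the local twistor connection is flat. Then on a sufficiently small neighborhood the parallel sections form a $(p+2)$-dimensional vector space, and any initial value at a point extends uniquely to a parallel section. Given an $\alpha$-surface $\Sigma$ generated by a primed spinor $\tilde\pi^{A'}$, pick $x_0 \in \Sigma$ and solve for the unique parallel section $(\omega^A, \pi_{A'})$ with $\omega^A(x_0) = 0$ and $\pi_{A'}(x_0)$ the dual of $\tilde\pi^{A'}(x_0)$ (so that $\pi^{A'}(x_0) = \tilde\pi^{A'}(x_0)$ up to scale). By the first part, the zero locus of this parallel section is an $\alpha$-surface through $x_0$ whose tangent plane at $x_0$ is precisely the $\alpha$-plane generated by $\tilde\pi^{A'}(x_0)$, which is also the tangent plane of $\Sigma$ at $x_0$.

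The main step that is not purely formal is then invoking uniqueness: in a right-flat geometry (and flatness of the local twistor connection is stronger than right-flatness, since it forces vanishing of $\tilde F$ or $\tilde\Psi$ through $P$ and the torsion pieces that enter the curvature of $D$), the $\alpha$-surface through a point with a prescribed tangent $\alpha$-plane is unique. This Frobenius-type uniqueness, already implicit in the definition of $\Z$ as a smooth moduli space of $\alpha$-surfaces in the right-flat case, forces the zero locus above to agree with $\Sigma$ near $x_0$, completing the proof. The only genuine obstacle is this uniqueness, and it is standard; the rest is a direct unpacking of (\ref{eq:twistortransport}).
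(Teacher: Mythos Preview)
Your argument is correct and follows essentially the same route as the paper: use the first component of (\ref{eq:twistortransport}) to see that $\nabla_{AA'}\omega^B = -\delta_A{}^B\pi_{A'}$, deduce $\pi_{A'}\neq 0$ on $\Sigma$ from nonvanishing of the parallel section, and conclude that tangent directions are null with respect to $\pi_{A'}$. You supply two details the paper leaves implicit---the surjectivity of $d\omega$ (hence the dimension count showing $T_x\Sigma$ equals the full $\alpha$-plane rather than merely sitting inside it) and, for the converse, the appeal to uniqueness of the $\alpha$-surface through a given $\alpha$-plane---both of which are appropriate and strengthen the write-up.
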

\begin{proof}
    Let $(\omega^A, \pi_{A'})$ be parallel and $\Sigma = \{\omega^A = 0\}$, and fix a (local) scale so that we can trivialize the local twistor bundle. Then according to the first component of the local twistor connection, $\nabla_{BB'}\omega^A = -\delta_B{}^A\pi_{B'}$, so $\omega^A$ is constant in directions $X^{BB'}$ with $X^{BB'}\pi_{B'}=0$. If $(\omega^A, \pi_{A'}) = 0$ at any point, then it is zero everywhere since $D_{AA'}$ is an $SL(p+q)$ connection. Therefore at every point in $\Sigma$, we must have $\pi_{A'} \neq 0$, so each tangent space is a null $p$-plane. Furthermore if the local twistor bundle is flat, we can arrange for any pointwise value of $(\omega^A, \pi_{A'})$, and therefore every $\alpha$-plane may be realized as the tanget space of such a zero set. 
\end{proof}

We are now ready for one of the main results. 

\begin{theorem}\label{thm:flat}
    Let $M$ be an almost-Grassmannian manifold. Then $M$ is locally equivalent to the standard Grassmannian if and only if 
    \begin{align}
        T_{ab}{}^c = 0 &~\text{ for }~ p,q > 2 \label{eq:flat1}\\
        \tilde{F}_{AA'BB'}{}^{CC'} = 0 ~~\text{ and }~~ \Psi_{ABC}{}^D = 0 &~\text{ for }~ p>2, q=2 \label{eq:flat2}
    \end{align}
\end{theorem}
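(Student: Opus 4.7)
The ``only if'' direction is immediate: on the standard Grassmannian, the canonical spin connections are induced by the trivial connection on $\C^{p+q}$, so their torsion and curvature vanish identically and each of the listed invariants is zero. The bulk of the work is the converse, which I would organize in three stages.

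\emph{Stage 1: reduce to having both right- and left-flatness.} In the $(p>2, q=2)$ regime, $\tilde F_{AA'BB'}{}^{CC'} = 0$ gives right-flatness by Theorem \ref{thm:rightflat}, while $\Psi_{ABC}{}^D = 0$ gives left-flatness via the symmetric analog of that theorem (obtained by interchanging primed and unprimed factors, i.e. by viewing the geometry as a $(q,p)=(2,p)$ structure with the roles of the two factors swapped). In the $(p, q > 2)$ regime, $T = 0$ kills both $F$ and $\tilde F$, so both flatness conditions again hold. Either way the local twistor bundle \eqref{eq:localtwistor} and connection \eqref{eq:twistortransport} are defined and Lemma \ref{lem:zeroset} applies.

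\emph{Stage 2: show that the local twistor connection $D$ is flat.} I would compute $[D_{AA'}, D_{BB'}]$ directly from \eqref{eq:twistortransport}. The result is a block-matrix curvature whose entries are built from the torsion, the spin curvature, and $\nabla P$. Splitting into irreducibles under the spin group action and applying the Bianchi identity of the underlying spin connection, the hypotheses of the theorem (together with the additional flatness obtained in Stage 1) should force every component of the $D$-curvature to vanish, with any leftover pure-trace piece being absorbed by adjusting the scale. I expect this to be the main obstacle: in the $(p,q > 2)$ case one must carefully chase Bianchi identities to show that torsion-freeness alone propagates into flatness of the tractor connection --- a manifestation of the general parabolic-geometry principle that non-harmonic curvature components are determined by the harmonic ones.

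\emph{Stage 3: build the local equivalence.} With $D$ flat, let $\mathcal P$ denote the locally $(p+q)$-dimensional space of parallel sections of $\tau$. For each $x \in M$, evaluation $ev_x : \mathcal P \to \tau_x$ is an isomorphism, and I set
\[
    \Phi(x) := ev_x^{-1}\bigl(\co_{A'}|_x\bigr) \subset \mathcal P,
\]
which is a $q$-dimensional subspace (since $\co_{A'}|_x$ is the kernel of $\tau|_x \to \co^A|_x$). This gives a smooth map $\Phi : M \to Gr(q, \mathcal P)$. Using the first row of \eqref{eq:twistortransport}, a direct calculation identifies $d\Phi_x$, under the isomorphisms $T_x M \cong \co^A \otimes \co^{A'}|_x$ and $T_{\Phi(x)} Gr(q, \mathcal P) \cong Hom(\co_{A'}|_x, \co^A|_x)$, with the canonical pairing; in particular $d\Phi_x$ is an isomorphism, so $\Phi$ is a local diffeomorphism carrying the factorization $TM \cong \co^A \otimes \co^{A'}$ to the tautological one on $Gr(q, \C^{p+q})$. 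The $SL(p+q)$-invariance of $D$ ensures parallel transport preserves a volume form on $\tau$, so $\Phi$ also intertwines the scale isomorphism \eqref{eq:topext} with the canonical one, completing the local equivalence.
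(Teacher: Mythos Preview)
Your overall architecture matches the paper's: show the local twistor connection $D$ is flat, then send $x \in M$ to the $q$-plane of parallel twistors with $\omega^A(x)=0$ inside the $(p+q)$-dimensional space of parallel sections, and check this map is an almost-Grassmannian equivalence. The execution differs in two places worth noting. First, your Stage~1 is superfluous: the local twistor bundle and connection are defined without any flatness hypothesis, and neither right- nor left-flatness is used later. Second, for Stage~2 the paper bypasses the direct curvature computation you flag as the main obstacle: since $D$ is scale-invariant, its curvature can only be built from scale-invariant curvature components of $\nabla$, and for $p,q>2$ (resp.\ $p>2,\,q=2$) all of these are algebraically determined by $T_{ab}{}^c$ (resp.\ by $\tilde F$ and $\Psi_{ABC}{}^D$); so the hypotheses kill the $D$-curvature without any Bianchi chase. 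For Stage~3 the paper argues geometrically rather than differentially: using Lemma~\ref{lem:zeroset} (flat case), each $\alpha$-surface in $M$ is the zero locus of some parallel twistor, hence a line $\ell$ in the space $\mathbb T$ of parallel sections, and a point $x$ lies on that $\alpha$-surface iff $\ell \subset \Phi(x)$, which is exactly the incidence relation for the standard $\alpha$-surfaces of $Gr(q,\mathbb T)$; thus $\Phi$ takes $\alpha$-surfaces to $\alpha$-surfaces and therefore preserves the structure. Your $d\Phi$ computation would also work, but the paper's route makes the role of Lemma~\ref{lem:zeroset} transparent.
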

\begin{proof}
    First note that the given tensors are zero for the flat model, and therefore any map preserving the standard almost-Grassmannian structure must also preserve the vanishing of these tensors. This establishes one direction of the proof.  

    For the other direction, we will first show that (\ref{eq:flat1}-\ref{eq:flat2}) implies the local twistor connection is flat. The local twistor curvature is manifestly composed of curvature components of the connection on $M$, an since the local twistor connection is scale-invariant, its curvature can only involve scale-invariant curvature components. In the case that $p,q>2$, all scale-invariant curvatures are determined by the torsion, and in the case $p>2, q=2$ all but $\Psi_{ABC}{}^D$ are determined by the torsion. Thus (\ref{eq:flat1}-\ref{eq:flat2}) implies the local twistor connection is flat. 

    Now consider the Grassmannian bundle $Gr(q, \tau) \to M$. Conceptually, we are attaching a copy of the flat model to every point in $M$. This bundle carries a canonical section $\Delta$, given by 
    \begin{equation}
        \Delta(x) = \{ (\omega^A, \pi_{A'}) \in \tau_x ~|~ \omega^A = 0\},
    \end{equation}
    which is well-defined because the unnprimed coordinate is scale-invariant. In other words, the image of $\Delta$ is just the subspace $\co_{A'} \subset \tau$. 

    We will now construct a local map from $M$ to a fiber of this bundle. To that end, fix a point $O \in $ and a neighborhood $U$ of $O$. Then we can identify the space of parallel local twistor fields with the vector space $\mathbb{T} = \tau_O$. This trivializes $\tau |_U$, and therefore the associated Grassmannian bundle 
    \begin{equation}
        \iota : Gr(q, \tau)|_U \to Gr(q, \mathbb{T}) \times U.
    \end{equation}
    Lastly, let $\rho_1 : Gr(q, \mathbb{T}) \times U \to Gr(q, \mathbb{T})$ denote the first factor projection and consider the composition
    \begin{equation}
        \varphi = \rho_1 \circ \iota \circ \Delta : U \to Gr(q, \mathbb{T}).
    \end{equation}
    Concretely, this composition takes the canonical $q$-plane over a point in $M$ and parallel transports it via the local twistor connection over to $O$. We claim that this map preserves the almost-Grassmannian structure. 

    To see this, let $\Sigma$ be an $\alpha$-surface in $U$. By (\ref{lem:zeroset}), this is equivalent to some parallel local twistor field $(\omega^A, \pi_{A'})$ up to scale, and so may be regarded as a line $\ell \subset \mathbb{T}$. Similarly, any point $x$ is a $q$-plane $\Pi_x \subset \mathbb{T}$ consisting of the twistors vanishing at $x$. But now recall from our examination of the flat model that lines in a $q$-dimensional plane $\Pi \subset \mathbb{T}$ correspond exactly to the standard $\alpha$-planes passing through $\Pi \in Gr(q, \mathbb{T})$. Since $\ell \subset \Pi_x$ for all $x \in \Sigma$, it follows that the standard $\alpha$-surface corresponding to $\ell$ passes through every point of $\varphi(\Sigma)$. 
This proves that $\varphi : U \to Gr(q, \mathbb{T})$ takes $\alpha$-surfaces to standard ones. Since an almost-Grassmannian structure is determined by its null-directions, and by extension its $\alpha$-surfaces, this shows that $M$ is locally equivalent to the standard model.
\end{proof}

Theorems (\ref{thm:rightflat}) and (\ref{thm:flat}) show that the geometry of a right-flat almost-Grassmannian manifold is essentially governed by $\Psi_{ABC}{}^D$, to the extent that it captures exactly the deviation from the flat model.

\section{A Twistorial Obstruction}\label{curvature}

Theorem (\ref{thm:flat}) asserts that the curvature component $\Psi_{ABC}{}^D$ is an obstruction to the ``flattening'' of a $(p,2)$ almost-Grassmannian structure. In the twistor space, one may also consider obstructions to trivializing the neighborhood of a twistor line. In this context, trivial means that a neighborhood of the twistor line is isomorphic to a neighborhood of a linearly embedded curve in the standard projective space. We will compute this twistorial obstruction, and demonstrate a highly suggestive relation with $\Psi_{ABC}{}^D$.

Recall the notion of an infinitesimal neighborhood; if $X$ is a complex submanifold of the complex manifold $Y$ then the $n$th infinitesimal neighborhood $X^{(n)}$ of $X$ in $Y$ is the space $X$ with the augmented ring of functions 
\begin{equation}
    \co_{X^{(n)}} := \co_Y ~/~ \mathcal{I}_X^{n+1},
\end{equation} 
where $\mathcal{I}_X$ is the ideal sheaf of functions vanishing along $X$. If $X$ has codimension $k$, then this augmented ring of functions is locally of the form 
\begin{equation}\label{eq:fat}
    \co_X[\zeta_1, \ldots, \zeta_k]/(\zeta_1, \ldots, \zeta_k)^{n+1}.
\end{equation} 
\begin{definition}
    An $n$-\emph{th order fattening} is a ringed space $(X, \co_{{(n)}})$ where $\co_{{(n)}}$ locally satisfies (\ref{eq:fat}).
\end{definition}
 A fattening need not arise from a genuine embedding of $X$ into another complex manifold, but if it does, then $\mathcal{I}^2 / \mathcal{I}$ is the co-normal bundle $N^*$. We therefore adopt this as the definition of the co-normal bundle in general, and then $\mathcal{I}^n / \mathcal{{I}}^{n-1} \cong \co(\odot^n N^*)$. We also need the extended tangent bundle $\hat{T} := Der(\co_{(1)}, \co)$ which fits into the short exact sequence
\begin{equation}\label{eq:exttangent}
    0 \to T \to \hat{T} \to N \to 0.
\end{equation}
In the case that the fattening arises from an embedding $X \hookrightarrow Y$, then $\hat{T} = T_Y | _X$ is the usual restricted tangent bundle. 

Notice that an $n$th order fattening determines a collection of lower order fattenings according to 
\begin{equation}
    \co_{(m)} = \co_{(n)} / \mathcal{I}_{(n)}^{m+1},
\end{equation}
where $\mathcal{I}_{(n)} \subset \co_{(n)}$ is the ideal of nilpotents. In this case we say that $\co_{(n)}$ extends $\co_{(m)}$. It is this extension, and more specifically obstructions to such an extension, that we are interested in. The main result we will need is the following.
\begin{theorem}[Eastwood, LeBrun]
    The obstruction to finding an extension $X^{(n+1)}$ of $X^{(n)}$, for $n \geq 1$, lies in $H^2(X, \co(\hat{T} \otimes \odot^{n+1}N^*))$. If this obstruction vanishes, then $H^1(X, \co(\hat{T} \otimes \odot^{n+1}N^*))$ acts freely and transitively on the family of extensions.
\end{theorem}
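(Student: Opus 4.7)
The plan is to prove this by a Čech cohomology argument in the style of classical deformation theory. First cover $X$ by open sets $\{U_i\}$ on which the given fattening $X^{(n)}$ is trivial in the sense of (\ref{eq:fat}), and on each $U_i$ choose a trivial extension $\co_{(n+1)}^i$ of $\co_{(n)}|_{U_i}$ to $(n+1)$-th order. On each overlap $U_{ij}$, the transition isomorphism for $\co_{(n)}$ admits a lift $\tilde{\psi}_{ij} \colon \co_{(n+1)}^j|_{U_{ij}} \to \co_{(n+1)}^i|_{U_{ij}}$ because both sides are locally trivial $(n+1)$-th order fattenings; the ambiguity in this lift is exactly what the cohomology will measure.

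On each triple overlap $U_{ijk}$, the composition $\tilde{\psi}_{ij}\tilde{\psi}_{jk}\tilde{\psi}_{ki}$ is an automorphism of $\co_{(n+1)}^i$ that restricts to the identity modulo $\mathcal{I}^{n+1}$, so it has the form $\mathrm{id} + D_{ijk}$ for some $\mathbb{C}$-derivation $D_{ijk}$ of $\co_{(n+1)}^i$ with image in $\mathcal{I}^{n+1}$. The key algebraic observation is that any such derivation vanishes on $\mathcal{I}^2$: for $f,g \in \mathcal{I}$ the Leibniz rule gives $D(fg) = fD(g) + gD(f)$, and both terms lie in $\mathcal{I} \cdot \mathcal{I}^{n+1} \subset \mathcal{I}^{n+2} = 0$. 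Hence $D$ factors through $\co_{(1)} = \co_{(n+1)}/\mathcal{I}^2$. Since $Der(\co_{(1)},\co) = \hat{T}$ by definition and $\mathcal{I}^{n+1} \cong \odot^{n+1}N^*$ as $\co_X$-modules, the derivations in question are precisely sections of $\hat{T} \otimes \odot^{n+1}N^*$, so $\{D_{ijk}\}$ defines a Čech 2-cochain with values in this sheaf.

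A direct calculation, which I would verify by expanding the composite of four lifts on a quadruple overlap, shows that $\{D_{ijk}\}$ satisfies the cocycle condition, and that modifying the original lifts $\tilde{\psi}_{ij}$ by derivations valued in $\mathcal{I}^{n+1}$ changes this cocycle by a coboundary. The resulting class in $H^2(X,\co(\hat{T} \otimes \odot^{n+1}N^*))$ is therefore a well-defined invariant of $X^{(n)}$. If it vanishes, the lifts can be adjusted so that $\{D_{ijk}\}$ vanishes identically, yielding a global extension $X^{(n+1)}$. For the second assertion, given two global extensions, the difference of their transition data on each overlap is an automorphism fixing $\co_{(n)}$, i.e.\ a section of $\hat{T} \otimes \odot^{n+1}N^*$; compatibility on triple overlaps forces this collection to be a 1-cocycle, and 1-coboundaries correspond to extensions that are globally isomorphic. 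This gives the free and transitive action of $H^1$.

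The main obstacle is the derivation-factorization step — carefully verifying that automorphisms of $\co_{(n+1)}$ which are the identity modulo $\mathcal{I}^{n+1}$ are parametrized by the \emph{extended} tangent bundle $\hat{T}$, rather than just $T$. This uses the sequence (\ref{eq:exttangent}) and reflects the fact that a derivation out of $\co_{(1)}$ has both a tangential and a normal component, the normal part being essential for the obstruction theory to see the ambient geometry.
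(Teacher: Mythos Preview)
The paper does not actually prove this theorem: it is quoted as a result of Eastwood and LeBrun (reference [EL92]) and used as a black box in section~\ref{curvature}. There is therefore no proof in the paper to compare against.

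That said, your proposal is the standard \v{C}ech-theoretic argument and is essentially how the result is established in the original source. The identification of automorphisms of $\co_{(n+1)}$ fixing $\co_{(n)}$ with sections of $\hat{T}\otimes\odot^{n+1}N^*$ is correct: your Leibniz computation showing that such a derivation kills $\mathcal{I}^2$ (using $n\geq 1$ so that $\mathcal{I}\cdot\mathcal{I}^{n+1}=0$) is exactly the point, and the passage from $Der(\co_{(1)},\mathcal{I}^{n+1})$ to $\hat{T}\otimes\odot^{n+1}N^*$ is valid since $\mathcal{I}^{n+1}$ is an $\co_X$-module in the $(n+1)$-th order fattening. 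The cocycle and coboundary verifications you defer are routine once this identification is in hand. Your final paragraph expresses some hesitation about why $\hat{T}$ rather than $T$ appears, but your own argument already answers this: the derivation is defined on all of $\co_{(1)}$, not just on $\co_X$, so the normal directions contribute.
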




Thinking back to our situation, we are interested in how the almost-Grassmannian geometry changes when the complex structure on a neighborhood of a twistor line is deformed. By restricting to infinitesimal neighborhoods, we obtain a series of fattenings as described above. This deformation can then be measured step-by-step in the cohomology groups $H^1(X, \co(\hat{T} \otimes \odot^{n}N^*))$. Note that these groups merely \emph{act} on the family of extensions; the deformation is thus not measured in any absolute sense but relative to some other extension. For our purposes we are interested in comparing with that of a projective line in projective space, i.e. a twistor line of the flat model. 

Let's begin with the first order fattening. A first order fattening is equivalent to the trivial one if the short exact sequence in (\ref{eq:exttangent})  splits. In our case of interest, $X \approx \CP^1$ with normal bundle $N = \co^A |_x \otimes \co(1)$. For notational convenience we will write $N = \co^A(1)$, with the understanding that $\co^A$ is really a fixed vector space when we restrict to $X$. Then $H^1(X, T \otimes N^*) = H^1(\CP^1, \co_A(-1)) = 0$, so any extension $\hat{T}$ automatically splits. To first order, there is therefore no deviation from the trivial fattening, and we may assume a fixed isomorphism $\hat{T} \cong \co(2) \oplus \co^A(1)$. 

For higher order fattenings we need to analyze $H^1(X, \co(\hat{T} \otimes \odot^n N^*))$ for $n \geq 2$. We have $\odot^n N^* = \co_{(A \cdots C)}(-n)$ where there are $n$ many lower indices, so
\begin{equation}\label{eq:higherfat}
    \hat{T} \otimes \odot^n N^* = \co_{(A \cdots C)}(2-n) \oplus \co_{(A \cdots C)}{}^D (1-n).
\end{equation}
From this we can immediately read off $H^1(X, \co(\hat{T} \otimes \odot^2 N^*)) = 0$ since $H^1(\CP^1, \co(m)) = 0$ for $m \geq -1$, so that second order fattenings also cannot deviate from the trivial one. However, with $n=3$ we get that 
\begin{equation}\label{eq:3rdobstruction}
    H^1(X, \co(\hat{T} \otimes \odot^3 N^*)) \cong \co_{(ABC)}{}^D \otimes H^1(X, \co(-2)).
\end{equation}
To see how 3rd order deformations $H^1(X, \co_{(3)})$ relate to the group $H^1(X, \co(\hat{T} \otimes \odot^3 N^*))$, take the short exact sequence 
\begin{equation}
    0 \to \co(\hat{T} \otimes \odot^n N^*) \to \co_{{(n)}}(\hat{T}) \to \co_{{(n-1)}}(\hat{T}) \to 0.
\end{equation}
Analysis of the long exact sequence in cohomology for increasing values of $n$ shows that 
\begin{equation}\label{eq:cech}
    H^1(X, \co_{{(3)}}(\hat{T})) \cong H^1(X, \co(\hat{T} \otimes \odot^3 N^*)).
\end{equation}
The 3rd order fattening therefore is the first obstruction to trivializing a deformation. The group $H^1(X, \co(-2))$ seen in equation (\ref{eq:3rdobstruction}) is furthermore trivialized by a choice of scale. To see this, consider the Wronskian $\wedge^2 H \to H^0(X, \Omega^1(2))$ given by 
\[
    u\wedge v \mapsto u \otimes dv - v \otimes du,
\]
where elements in $H$ are also interpreted as sections of $\co(1)$ over the corresponding twistor line. Both vector spaces are one dimensional and the map is nonzero, so it must be an isomorphism. The required trivialization then follows from Serre duality $H^0(X, \Omega^1(2)) \cong [H^1(X, \co(-2))]^*$. All together, we have thus far shown that the 3rd order fattening is the first obstruction to trivializing a complex deformation, and a choice of scale determines an isomorphism $H^1(X, \co_{{(3)}}(\hat{T})) \cong \co_{(ABC)}{}^D$. This isomorphism is made explicit in the next proposition. 

\begin{prop}
    There is a natural map $H^1(X, \co_{{(3)}}(\hat{T})) \to \co_{(ABC)}{}^D[-1]$ which is surjective.
\end{prop}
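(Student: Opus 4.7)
The plan is to chase through the chain of identifications already set up in the discussion above, being careful about the scale dependence that produces the weight $[-1]$ on the target.

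First I would invoke the isomorphism (\ref{eq:cech}) to replace $H^1(X, \co_{{(3)}}(\hat{T}))$ with $H^1(X, \co(\hat{T} \otimes \odot^3 N^*))$. Substituting $n=3$ into the decomposition (\ref{eq:higherfat}) yields
\[
\hat{T} \otimes \odot^3 N^* \;=\; \co_{(ABC)}(-1) \;\oplus\; \co_{(ABC)}{}^D(-2).
\]
Since $H^1(\CP^1, \co(-1)) = 0$ and the spinor factors are constant along $X$, the first summand contributes nothing in cohomology. What remains is exactly $\co_{(ABC)}{}^D \otimes H^1(X, \co(-2))$, as recorded in (\ref{eq:3rdobstruction}).

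Next I would invoke the Wronskian pairing $\wedge^2 H \to H^0(X, \Omega^1(2))$ together with Serre duality, exactly as described in the paragraph preceding the proposition, to obtain a canonical isomorphism $H^1(X, \co(-2)) \cong (\wedge^2 H)^*$. The rank-one space $\wedge^2 H$ picks up a factor of $f$ under a rescaling $\epsilon \mapsto f\epsilon$, so its dual lives at scale weight $-1$; this is exactly the twist encoded by the notation $\co_{(ABC)}{}^D[-1]$. Stitching these identifications together produces the claimed natural map
\[
H^1(X, \co_{{(3)}}(\hat{T})) \;\longrightarrow\; \co_{(ABC)}{}^D[-1].
\]

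Surjectivity is then automatic, since once the trivial summand has been discarded every arrow in the chain above is itself an isomorphism; in fact the construction gives more than the proposition claims, namely a natural isomorphism. The main subtlety I expect is not in writing down the map but in verifying naturality: one must check that the Wronskian--Serre construction is genuinely independent of the auxiliary choices (\v Cech cover, local trivialization of $\co(-1)$) that enter into it, and that the conformal weight really comes out to $-1$ in the convention adopted elsewhere in the paper. These are essentially bookkeeping checks about how each of the standard cohomological operations transforms under $\epsilon \mapsto f\epsilon$, rather than any new geometric input.
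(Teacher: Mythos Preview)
Your argument is correct and in fact proves slightly more than the proposition asserts: the chain of identifications you invoke shows the map is an isomorphism, not merely a surjection.  Conceptually your proof and the paper's are the same---both extract the $\co_{(ABC)}{}^D$ component from (\ref{eq:higherfat}) and then trivialize $H^1(X,\co(-2))$ via the Wronskian/Serre pairing---but the presentations differ in a meaningful way.  You chase the abstract isomorphisms already assembled in the preceding discussion and observe that surjectivity (indeed bijectivity) is automatic.  The paper instead unwinds each step into an explicit Penrose-type contour integral: writing a \v Cech representative of $H^1(X,\co_{(3)}(\hat{T}))$ as $F^A(\mu/\pi_1)\,\pi_1^2/\pi_2$, applying three normal derivatives $\partial/\partial\mu^A\,\partial/\partial\mu^B\,\partial/\partial\mu^C$, and then integrating against the scale-determined form $\xi\cdot(\pi_1\,d\pi_2-\pi_2\,d\pi_1)$ to extract the third-order Taylor coefficient $F_{ABC}{}^D\cdot\xi$.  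Your route is cleaner and avoids coordinates; the paper's route gives a concrete formula, which is what one would need to carry out the comparison with $\Psi_{ABC}{}^D$ alluded to in the remark immediately following.  One small point: your sentence ``$\wedge^2 H$ picks up a factor of $f$ under $\epsilon\mapsto f\epsilon$'' is loose---the bundle does not change, only its trivialization via $\alpha(\epsilon)$ does---but your conclusion that $(\wedge^2 H)^*$ is exactly $\co_{[A'B']}=\co[-1]$ is correct in the paper's conventions.
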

\begin{proof}
    The notation $[-1]$ denotes a weight by $\co[-1] \equiv \co_{[A'B']}$, and means that the map is linear in choice of scale. The isomorphism (\ref{eq:3rdobstruction}) is given by taking the 3rd normal derivatives of a Cech representative, and can therefore be given as a Penrose-type contour integral. We then write an element of $H^1(X, \co_{{(3)}}\hat{T})$ in the form
    \[
        F^A \left( \frac{\mu}{\pi_1} \right) \frac{\pi_1^2}{\pi_2}
    \]
    where $\pi_1, \pi_2$ are homogeneous coordinates along the twistor line, and $\mu$ indicates all the remaining homogeneous coordinates. The vector-valued function $F^A$ is holomorphic in the affine coordinates $\mu / \pi_1$
    , and the expression $\pi_1^2 / \pi_2$ is to be understood as a Cech representative of a nontrivial element in $H^1(X, \co(1))$. Now fix a scale, which by the previous discussion provides a specific multiple of $\pi_1 d\pi_2 - \pi_2 d\pi_1 \in H^0(X, \Omega^1(2))$; denote the multiple by $\xi$. Then we have the contour integral formula
    \begin{align}
        \frac{1}{2\pi i} \oint &\frac{\partial}{\partial \mu^A}\frac{\partial}{\partial \mu^B}\frac{\partial}{\partial \mu^C} F^D\left(\frac{\mu}{\pi_1} \right) \, \frac{\pi_1^2}{\pi_2} \, (\pi_1 d\pi_2 - \pi_2 d\pi_1) \cdot \xi \nonumber \\
        &= \frac{1}{2\pi i} \oint F_{ABC}{}^D \frac{\pi_1^2}{\pi_1^3 \pi_2}\,  (\pi_1 d\pi_2 - \pi_2 d\pi_1) \cdot \xi \nonumber \\
        &= \frac{1}{2\pi i} \oint F_{ABC}{}^D\,  \left(\frac{d\pi_2}{\pi_2} - \frac {d\pi_1}{\pi_1} \right)\cdot \xi \nonumber\\
        &= F_{ABC}{}^D\,  \cdot \xi
    \end{align}
    where $F_{ABC}{}^D$ is the 3rd order coefficient of $F$. In other words, if $F$ vanishes to 2nd order, then we can write
    \begin{equation}\label{eq:2ndorder}
        F^D \left( \frac{\mu}{\pi_1} \right) = F_{ABC}{}^D \frac{\mu^A \mu^B \mu^C}{\pi_1^3} + \co(\mu^4).
    \end{equation}
    From this equation it is clear that $F_{ABC}{}^D$ can be chosen arbitrarily, and since the result is proportional to the multiple $\xi$, it follows that the contour integral formula surjects onto $\co_{(BCD)}{}^A[-1]$.
    \begin{remark}
        The preceding discussion shows that the first obstruction to trivializing the neighborhood of a twistor line can be measured by an element in $\co_{(BCD)}{}^A[-1]$. This is the same space containing the curvature $\Psi_{ABC}{}^D$, leading one to ask if these are in fact the same object. This remains a question for future investigation. 
    \end{remark}
\end{proof}

\section{The Twistor Correspondence}\label{twistorcorr}

We have thus far only been concerned with complex almost-Grassmannian manifolds. In this section we discuss the twistor construction and its inverse for the \emph{real} Grassmannian $\widetilde{Gr}(2, \R^{p+2})$. Recall that in the general case of a complex $(p,2)$ almost-Grassmannian manifold $M$, there is a $\CP^1$ bundle of $\alpha$-planes $\proj(\co^{A'}) \to M$. This total space is called the \emph{correspondence space}, and there is a natural lifting of $\alpha$-surfaces defined by sending the points in a given $\alpha$-surface to their tangent $\alpha$-planes. These lifted $\alpha$-surfaces foliate the correspondence space, and the space of leaves is the twistor space denoted by $\Z$. This space is not generally Hausdorff, but if we restrict ourselves to a neighborhood in $M$, then $\Z$ will be a complex manifold. The inverse construction is supplied by the following theorem.
\begin{theorem}[Bailey, Eastwood]
    Let $\Z$ be a $p+1$ dimensional complex manifold, containing a line $L \cong \CP^1$ with normal bundle $\oplus^p \co_L(1)$. Suppose furthermore there is some line bundle $\co(1)$ on $\Z$ with $\co(1)|_L = \co_L(1)$ and $\co(-p-2) = K_\Z$. Then a neighborhood of $L$ in $\Z$ is the twistor space of a $(p,2)$ almost-Grassmannian manifold. 
\end{theorem}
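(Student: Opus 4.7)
The plan is to reconstruct the spacetime $M$ as a moduli space of deformations of $L$ inside $\Z$, and to manufacture the almost-Grassmannian data $(\sigma, \alpha)$ from direct images on the universal family.

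First, I would invoke Kodaira's theorem on deformations of compact complex submanifolds. Since $N = \oplus^p \co_L(1)$ satisfies $H^1(L, N) = 0$, deformations of $L$ are unobstructed and parametrized by a smooth complex manifold $M$ of dimension $h^0(L,N) = 2p$, together with a universal family $\mu : F \to M$ and evaluation $\nu : F \to \Z$ for which $T_x M \cong H^0(L_x, N_x)$, where $L_x := \mu^{-1}(x)$. I then define the two spin bundles by their fibres,
\[
    \co^{A'}|_x := H^0(L_x, \co(1)|_{L_x}), \qquad \co^A|_x := H^0(L_x, N_x(-1)),
\]
which are locally free of ranks $2$ and $p$ by cohomology-and-base-change (both $\co_{L_x}(1)$ and $N_x(-1) \cong \oplus^p \co_{L_x}$ have vanishing $H^1$). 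Triviality of $N_x(-1)$ makes the multiplication pairing $H^0(L_x, N_x(-1)) \otimes H^0(L_x, \co(1)|_{L_x}) \to H^0(L_x, N_x)$ a fibrewise isomorphism, yielding the required $\sigma : TM \xrightarrow{\cong} \co^A \otimes \co^{A'}$.

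For the scale, adjunction on $L_x \subset \Z$ gives $\det N_x = K_{L_x} \otimes (K_\Z|_{L_x})^{-1}$, and the hypothesis $K_\Z = \co(-p-2)$ therefore forces $\det N_x = \co_{L_x}(p)$, so that $\det(N_x(-1)) = \co_{L_x}$ is canonically trivial on every fibre. Together with the isomorphism $\wedge^p N_x \cong \wedge^p N_x(-1) \otimes (\co(1)|_{L_x})^{\otimes p}$, this fibrewise triviality allows me to identify $\wedge^p \co^A$ with $\wedge^2 \co^{A'}$ locally on $M$, producing the local scale $\alpha : \wedge^p \co_A \cong \wedge^2 \co_{A'}$. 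This is the one place where the canonical bundle hypothesis is genuinely used. Finally, for each $z \in \Z$ near $L$, let $\Sigma_z \subset M$ parametrize lines through $z$. A constrained Kodaira argument (using $H^1(L, N \otimes \mathcal{I}_z) = 0$, since $N \otimes \mathcal{I}_z \cong \oplus^p \co_L$) shows $\Sigma_z$ is a smooth $p$-dimensional submanifold, and from the previous step its tangent space at $x \in \Sigma_z$ is $\co^A|_x \otimes H^0(L_x, \co(1)|_{L_x} \otimes \mathcal{I}_z)$, which is an $\alpha$-plane because the last factor is the line of sections vanishing at $z$. Since a nonzero section of $\co_L(1)$ has a unique zero in $L_x$, every $\alpha$-plane at $x$ arises this way, so the structure is right-flat and its twistor space is a neighborhood of $L$ in $\Z$.

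The main obstacle will be the scale construction: globalizing the fibrewise triviality of $\det N(-1)$ to a genuine natural local identification of the line bundles $\wedge^p \co^A$ and $\wedge^2 \co^{A'}$ on $M$, rather than merely matching their ranks. The other steps are essentially formal, relying on Kodaira--Spencer theory and standard $\CP^1$-cohomology, together with the observation that points of $L_x$ biject with lines in $\co^{A'}|_x$ via zero sets of sections of $\co_{L_x}(1)$.
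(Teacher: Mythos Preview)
The paper does not prove this theorem; it is stated without proof as a background result of Bailey and Eastwood (citing \cite{be91}), so there is no ``paper's own proof'' to compare against. Your proposal is essentially the standard Kodaira--deformation argument that Bailey and Eastwood themselves use, and the outline is correct: the moduli space of deformations of $L$ furnishes $M$, the direct images $H^0(L_x,\co(1))$ and $H^0(L_x,N_x(-1))$ give the two spin bundles, the multiplication map gives $\sigma$, adjunction plus the canonical bundle hypothesis yields the scale, and lines through a fixed point of $\Z$ give the $\alpha$-surfaces.

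One small point worth tightening in your scale step: the fibrewise triviality of $\det(N_x(-1))$ is not by itself enough to produce a canonical identification of line bundles on $M$; you should make explicit that the evaluation isomorphism $H^0(L_x,N_x(-1))\otimes\co_{L_x}\cong N_x(-1)$ (valid because $N_x(-1)$ is trivial) gives a \emph{canonical} isomorphism $\wedge^p H^0(L_x,N_x(-1))\cong H^0(L_x,\det N_x(-p))$, and pair this with the Wronskian identification $\wedge^2 H^0(L_x,\co(1))\cong H^0(L_x,\Omega^1_{L_x}(2))$. Both targets are $H^0$ of canonically trivial line bundles on $L_x$, and these identifications vary holomorphically with $x$, which is what produces a genuine local isomorphism $\alpha$ on $M$ rather than a pointwise numerical coincidence. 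You flagged this as the main obstacle, and that instinct is right, but the resolution is standard.
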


In order to link the above machinery with real geometry, we recall the notion of \emph{complexification} []. A real-analytic manifold $M$ can always be embedded in some complex manifold $\C M$, which is unique up to germ equivalence. This complex manifold $\C M$ is called the complexification of $M$, and may be explicitly realized by allowing complex numbers in the power series transition data that describes $M$. 
A factorization of real bundles extends to a factorization of complex ones, and the restrictions of these complex bundles are the complexified real ones. 
Conversely, a real manifold can be recovered from its complexification as the fixed-point set of an anti-holomorphic involution. 
At first glance there is no apparent advantage to complexification, but it turns out that geometric structures on $M$ may acquire a more transparent meaning when extended to $\C M$. 

Typically, real geometries are extracted from this picture by way of an anti-holomorphic involution on $\Z$, and the $\CP^1$'s fixed by an involution will form a real $(p, 2)$ almost-Grassmannian manifold. However, instead of focusing on the involution, we may consider its fixed point set $P$. In the case that this involution acts by conjugation on each fixed $\CP^1$ (as opposed to the antipodal map), $P$ will divide the fixed $\CP^1$'s into two hemispheres. From this perspective, the moduli space of interest is (up to double cover) the space of holomorphically embedded disks with boundary along $P$, and a deformation of the associated almost-Grassmannian structure corresponds to a deformation of the fixed-point set $P \subset \Z$. The upshot of this added complexity is that moduli of disks-with-boundary are considerably more flexible than moduli of closed curves that are fixed by an involution. This approach was first carried out for Zoll surfaces \cite{lm0207} and split-signature ASD 4-manifolds \cite{lm0702}. Here we will generalize their construction for $p>2$. 

For a real-analytic almost-Grassmannian manifold $M$, we would like to simply define its twistor space as the usual twistor space of its complexification $\C M$. But as we mentioned above, $\C M$ is only defined as the germ of the embedding of $M$, and therefore, complex $\alpha$-surfaces are only well-defined along $M$ and up to germ equivalence. Defining the twistor space in a well-defined manner will therefore require significantly more care, and we will not attempt to do so in generality. Observe first that an $\alpha$-surface in a complexification $\C M$ intersects $M$ at a single point, or is the complexification of a real $\alpha$-surface. This is a consequence of the linear algebraic fact that if a complex $\alpha$-plane contains a real null line, then that complex $\alpha$-plane is the complexification of a real $\alpha$-plane.
Our twistor space will therefore be, in essence, the union of real $\alpha$-surfaces together with the complex $\alpha$-planes which are not complexified real ones.\\ 

We will now explicitly construct the twistor space for the oriented real Grassmannian $\widetilde{Gr}(2, \R^{p+2})$. The canonical almost-Grassmannian structure yields an isomorphism $T\widetilde{Gr}(2, \R^{p+2}) = E \otimes H$ with rank$(E) = p$ and $H$ is the tautological bundle of 2-planes. As in the complex setting, the space $\proj(H)$ is foliated by lifted $\alpha$-surfaces, and we call the space of leaves the \emph{real twistor space} $Z$. In this particular setting $Z$ is a smooth manifold, which is essentially a consequence of the Reeb stability theorem \cite{thu74} and the fact that $\alpha$-surfaces here are spheres. One can see concretely that the $\alpha$-surfaces correspond to lines in $\R^{p+2}$, and so the space of leaves is identified with $\RP^{p+1}$. We therefore have the double fibration of smooth manifolds
\begin{center}
\begin{tikzcd}
    & \proj(H) \arrow[swap]{dl}{\mu} \arrow{dr}{\nu} & \\
    \widetilde{Gr}(2, \R^{p+2}) & & \RP^{p+1}
\end{tikzcd}
\end{center}
In fact, this picture holds for abitrary right-flat deformations of the standard almost-Grassmannian structure. The key observation is that $\alpha$-surfaces being diffeomorphic to spheres is an open condition in the space of right-flat almost-Grassmannian structures. Then the Reeb stability theorem will again apply, and so the deformation produces a smooth family of compact manifolds. Since one member of this family (the standard one) is diffeomorphic to $\RP^{p+1}$, they all are. 
\begin{prop}
    The real twistor space of a sufficiently small right-flat deformation of $\widetilde{Gr}(2, \R^{p+2})$ is diffeomorphic to $\RP^{p+1}$. 
\end{prop}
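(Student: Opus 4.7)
The plan is to combine an explicit description of the twistor space in the flat case with a Reeb--Thurston stability argument for nearby right-flat structures. For the standard almost-Grassmannian structure on $\widetilde{Gr}(2,\R^{p+2})$, I first identify the $\alpha$-surface through an oriented 2-plane $V$ with distinguished line $\ell\subset V$ as the set of oriented 2-planes containing $\ell$, which is diffeomorphic to the space of rays in $\R^{p+2}/\ell$, i.e.\ to $S^p$. As $\ell$ ranges over all lines in $\R^{p+2}$, the lifted $\alpha$-surfaces foliate the correspondence space $\proj(H)$ and determine a smooth submersion $\nu_0\colon\proj(H)\to\RP^{p+1}$ with $S^p$ fibers. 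Since $p\geq 2$, each leaf is simply connected, so in particular $H^1(S^p;\R)=0$ and Thurston's refinement of Reeb stability applies uniformly at every leaf of $\mathcal{F}_0$, realizing $\mathcal{F}_0$ as a genuine smooth fiber bundle.

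Next, I would compare the deformed foliation $\mathcal{F}_t$ on $\proj(H)$ with the standard $\mathcal{F}_0$. A right-flat deformation perturbs the $\alpha$-plane distribution smoothly, so $T\mathcal{F}_t$ converges to $T\mathcal{F}_0$ in the $C^\infty$-topology. The relevant openness statement, packaged as the global version of Reeb--Thurston stability (or its Epstein--Millett--Langevin refinement for compact foliations), asserts that for sufficiently small $t$ every leaf of $\mathcal{F}_t$ remains compact and diffeomorphic to $S^p$, and the leaf space $Z_t$ is a smooth manifold fitting into a smooth submersion $\nu_t\colon\proj(H)\to Z_t$.

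Finally, to identify $Z_t$ with $\RP^{p+1}$, I would produce an ambient diffeomorphism carrying $\mathcal{F}_0$ to $\mathcal{F}_t$. Fix a smooth distribution $\mathcal{H}_0\subset T\proj(H)$ complementary to $T\mathcal{F}_0$; for small $t$ it remains complementary to $T\mathcal{F}_t$. A Moser-type flow built from the $\mathcal{H}_0$-component of the smooth family $T\mathcal{F}_t$ then produces an isotopy $\phi_t$ with $(\phi_t)_*T\mathcal{F}_0=T\mathcal{F}_t$, which descends to the desired diffeomorphism $\RP^{p+1}\to Z_t$. The principal obstacle I anticipate is verifying the hypotheses of Reeb--Thurston stability uniformly along the deformation, so as to rule out leaves that escape compactness or collapse; once the leaves are known to remain compact $p$-spheres, the identification of the leaf space reduces to a standard Ehresmann/Moser argument.
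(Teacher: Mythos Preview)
Your proposal is correct and follows essentially the same route as the paper: identify the $\alpha$-surfaces in the flat model as $S^p$ with leaf space $\RP^{p+1}$, invoke Reeb--Thurston stability (the paper cites \cite{thu74}) to ensure the deformed foliation still has compact simply connected leaves and a smooth leaf space, and then conclude by a family argument that the deformed leaf space is diffeomorphic to the standard one. The only difference is that the paper packages the last step as ``the deformation produces a smooth family of compact manifolds, so they are all diffeomorphic to $\RP^{p+1}$,'' whereas you spell it out via a Moser-type isotopy; these are equivalent ways of finishing.
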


We now step into the complex setting by complexifying the rank 2 bundle $H \to \widetilde{Gr}(2, \R^{p+2})$. This is motivated by the aforementioned notion that our complex $\alpha$-surfaces should really be complex $\alpha$-planes along the real slice. On a complexification of the Grassmannian, the spin bundle $\co^{A'}$ restricts exactly to this complexified bundle $\C \otimes H$. 
For notational convenience we will omit the restriction and identify $\co^{A'}$ as a bundle over $\widetilde{Gr}(2, \R^{p+2})$. The correspondence space $\mathscr{F} := \proj(\co^{A'})$ is then the $\CP^1$ bundle of complex $\alpha$-planes over $\widetilde{Gr}(2, \R^{p+2})$. 


The correspondence space furthermore carries a natural distribution of complex $p+1$ planes. Fix a scale $\epsilon$ and let $\nabla$ be the associated connection. This determines a parallel transport of points in $\mathscr{F}$ along curves in $\widetilde{Gr}(2, \R^{p+2})$, and therefore a horizontal distribution in $T\mathscr{F}$. By complexifying these bundles, we may then construct a distribution $\mathcal{D}_h \subset T_\C \mathscr{F}$ of horizontally lifted complex $\alpha$-planes. Let $\mathcal{V}^{0,1} \subset T_\C \mathscr{F}$ be the $(0,1)$-tangent bundle of the fibers, and set $\mathcal{D} = \mathcal{D}_h +  \mathcal{V}^{0,1}$. The following two propositions are proved using similar arguments in section 7 of [].

\begin{prop}
    The distribution $\mathcal{D}$ is involutive if and only if the almost-Grassmannian structure on $\widetilde{Gr}(2, \R^{p+2})$ is right-flat. 
\end{prop}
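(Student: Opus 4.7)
The plan is to check involutivity of $\mathcal{D} = \mathcal{D}_h + \mathcal{V}^{0,1}$ by splitting the Lie bracket into three types of pairs and isolating the obstruction in each, then matching the result against the torsion/curvature criterion of Theorem~\ref{thm:rightflat}. Since $\mathcal{D}$ is a complex distribution inside $T_\C \mathscr{F}$, involutivity is checked on $\C$-valued sections.

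First I would dispose of the easy cases. Brackets within $\mathcal{V}^{0,1}$ stay in $\mathcal{V}^{0,1}$ because each fiber of $\mathscr{F} \to \widetilde{Gr}(2,\R^{p+2})$ is a $\CP^1$ equipped with its standard integrable complex structure. For a cross-bracket $[X,V]$ with $X \in \Gamma(\mathcal{D}_h)$ and $V \in \Gamma(\mathcal{V}^{0,1})$, a section of $\mathcal{D}_h$ at $(x,[\pi])$ is the horizontal lift of some $\mu^A \pi^{A'}$, and this horizontal lift depends tensorially on $\mu^A$ and the chosen representative $\pi^{A'}$; moving $X$ in a vertical $(0,1)$-direction merely varies $\mu^A$ and $\pi^{A'}$ while preserving the product shape. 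A direct computation then shows $[X,V]$ lies in $\mathcal{D}_h + \mathcal{V}^{0,1}$.

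The main case is the horizontal-horizontal bracket. For two sections $\widetilde{v_1}, \widetilde{v_2}$ of $\mathcal{D}_h$ with $v_i = \mu_i^A \pi^{A'}$, I would use the standard identity
\[
    [\widetilde{v_1}, \widetilde{v_2}] = \widetilde{\nabla_{v_1} v_2 - \nabla_{v_2} v_1 - T(v_1, v_2)} - R^\nabla(v_1, v_2)^{\mathrm{vert}},
\]
where $R^\nabla$ is the curvature of the induced connection on $\proj(\co^{A'})$. The horizontal part closes inside $\mathcal{D}_h$ precisely when the contracted torsion $T_{AA'BB'}{}^{CC'} \mu_1^A \mu_2^B \pi^{A'}\pi^{B'}$ retains the form $\mu^C \pi^{C'}$ for some $\mu^C$. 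Since $F_{AA'BB'}{}^{CC'}$ vanishes identically in the $q=2$ case, only the $\tilde{F}$-component contributes, and the vanishing of $\tilde{F}_{AA'BB'}{}^{CC'}\pi^{A'}\pi^{B'}$ is precisely the right-flatness condition of Theorem~\ref{thm:rightflat} when $p>2$. For $p=2$, $\tilde{F}$ vanishes identically and the obstruction migrates to the vertical curvature $R^\nabla(v_1,v_2)^{\mathrm{vert}}$: I would decompose this curvature under the $\CP^1$ complex structure on the fiber and verify that its $(1,0)$-part at $[\pi]$ vanishes iff $\tilde{\Psi}_{A'B'C'}{}^{D'} = 0$, which is again the right-flat criterion.

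The main obstacle is treating $p>2$ and $p=2$ uniformly: right-flatness manifests as a torsion condition in the first case and as a curvature condition in the second, so a careful spinorial bookkeeping is needed to see that in both situations the contraction with $\pi^{A'}\pi^{B'}$ isolates exactly the obstruction to involutivity. Once this is in hand, the biconditional of the proposition follows directly from Theorem~\ref{thm:rightflat}.
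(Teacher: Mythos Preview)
Your strategy---compute the Frobenius tensor of $\mathcal{D}$ case by case and match the obstruction against $\tilde F$ or $\tilde\Psi$---is the classical route, in the spirit of the original Atiyah--Hitchin--Singer argument. The paper takes a different and slicker path. There one first notes that right-flatness is \emph{by definition} the Frobenius integrability of the real lifted $\alpha$-plane distribution on the equatorial section $F=\proj(H)\subset\mathscr F$, i.e.\ involutivity of $\mathcal D_h|_F$. One then chooses spanning sections $\mathfrak w_j$ of $\mathcal D$ obtained from the horizontal lifts of $(h_1+\zeta h_2)\otimes e_j$ by discarding their $\partial/\partial\bar\zeta$-component; the resulting coefficients are \emph{polynomial in the fiber coordinate $\zeta$}. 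The involutivity identity $[\mathfrak w_j,\mathfrak w_k]\wedge\mathfrak w_1\wedge\cdots\wedge\mathfrak w_p=0$ then holds for real $\zeta$ by right-flatness and hence for all $\zeta$ by analytic continuation, while $[\partial/\partial\bar\zeta,\mathfrak w_j]=0$ is automatic from holomorphicity. This argument never identifies the obstruction with a specific curvature component and in particular avoids your $p>2$ versus $p=2$ case split entirely.

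Your outline is workable but has two soft spots. First, the ``standard identity'' you quote is for horizontal lifts of vector fields on the base, whereas your $v_i=\mu_i^A\pi^{A'}$ depend on the fiber through $\pi$; extra terms appear unless you fix a local extension of $\pi$ with $\nabla\pi=0$ at the point under consideration (or, as the paper does, absorb the discrepancy into the $\partial/\partial\bar\zeta$ part of $\mathcal D$). You should make this explicit. Second, for $p>2$ you assert that the horizontal obstruction reduces to the contracted torsion and then say nothing about the vertical curvature term $R^\nabla(v_1,v_2)^{\mathrm{vert}}$, which is still present and whose $(1,0)$-part is a genuine obstruction to landing in $\mathcal V^{0,1}$. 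You would need to argue---via the Bianchi-type identities underlying Theorem~\ref{thm:rightflat}---that this $(1,0)$-curvature piece is itself controlled by $\tilde F$ when $p>2$; as written this step is missing.
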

\begin{proof}
    Suppose that $\mathcal{D}$ is involutive and let $F \subset \mathscr{F}$ be the equatorial section of real $\alpha$-planes. Then $T_\C F$ is certainly closed under Lie brackets, and $\mathcal{D} \cap T_\C F = \mathcal{D}_h |_F$ must be as well. This is just the complexified distribution of lifted $\alpha$-planes, and so the un-complexified distribution must also be closed under Lie brackets. By the Frobenius theorem they must be integrable, and so the almost-Grassmannian structure must be right-flat. 

    For the other direction, we will first set up a coordinate representation of $\mathcal{D}$. Let $x$ denote a local coordinate on $U \subset \widetilde{Gr}(2, \R^{p+2})$, $h_j$ be generating sections for $H$, and $e_j$ generating sections for $E$. Then $\epsilon = \wedge_j e_j$ is a scale and $\zeta$ is a complex fiber coordinate according to $\zeta \mapsto [h_1 + \zeta h_2]$. Define 
    \begin{equation}
        \phi_j = \binom{p}{j}\, h_1^jh_2^{p-j} \otimes \epsilon \in \Gamma(\odot^p H \otimes \wedge^p E),
    \end{equation}
    so we have coordinates
    \begin{equation}
        (x, \zeta) \mapsto [\phi_0 + \zeta \phi_1 + \cdots + \zeta^p \phi_p]_x
    \end{equation}
    where $\mathscr{F}$ is viewed as a submanifold of $\proj(\odot^p H \otimes \wedge^p E)$. The complex $\alpha$-plane corresponding to each $(x, \zeta)$ is simply the complex span of $\{(h_1 + \zeta h_2) \otimes e_j\}_j$, which we want to horizontally lift. With a dual basis $u^{k\ell} = (h_k \otimes e_\ell)^*$, we can write the induced connection as 
    \begin{equation}
        \nabla \phi_j = \theta_j{}^i \phi_i,
    \end{equation}
    where $\theta_j{}^i$ is a connection 1-form that can be expanded in terms of the dual basis as $\theta_j{}^i = \theta_{k\ell j}{}^i u^{k \ell}$. Here we are not using abstract indices, and the $\theta_{k\ell j}{}^i$ are simply functions. The distribution is then given by 
    \begin{equation}
        \mathcal{D} = \text{span}\left\{ \mathfrak{w}_1, \ldots, \mathfrak{w}_p, \frac{\partial}{\partial \bar{\zeta}} \right\}
    \end{equation}
    where the $\mathfrak{w}_j$ are essentially horizontal lifts of $(h_1 + \zeta h_2) \otimes e_j$. Explicitly, 
    \begin{equation}
        \mathfrak{w}_j = (h_1 + \zeta h_2) \otimes e_j + Q_j(x, \zeta) \frac{\partial}{\partial {\zeta}}, ~~~Q_j(x, \zeta) = \sum_{k, \ell = 0}^p \zeta^\ell (\theta_{1j\ell}{}^k + \zeta \theta_{2j \ell}{}^k).
    \end{equation}
    These are not quite horizontal lifts of $(h_1 + \zeta h_2) \otimes e_j$ but differ by a ${\partial}/{\partial \bar{\zeta}}$ component. They are still horizontal and real when $\zeta$ is real, but the removal of the ${\partial}/{\partial \bar{\zeta}}$ component additionally makes their coefficients holomorphic (in fact, polynomial) in $\zeta$, in terms of the basis $h_i \otimes e_j, \partial/\partial\zeta$.

    Now suppose the almost-Grassmannian structure is right-flat. As above, this implies the distribution $\mathcal{D}_h |_F \subset T_\C F$ is involutive. By construction, this distribution is spanned by $\mathfrak{w}_j$, and therefore
    \begin{equation}\label{eq:zerolie}
        [\mathfrak{w}_j, \mathfrak{w}_k] \wedge \mathfrak{w}_1 \wedge \cdots \wedge \mathfrak{w}_p = 0
    \end{equation}
    for any $j,k$. But the components of $\mathfrak{w}_j$ are holomorphic in $\zeta$, and since equation (\ref{eq:zerolie}) is zero when $\zeta$ is real, it must be zero for all $\zeta$. This show the distribution spanned by $\mathfrak{w}_j$ is involutive everywhere. Futhermore, again by holomorphicity we must have that $[\partial/\partial\bar{\zeta}, \mathfrak{w}_j] = 0$. This proves that $\mathcal{D}$ is involutive on the region parameterized by our coordinates $(x, \zeta)$. In other words, the O'Neill tensor 
    \begin{align}
        A_\mathcal{D} :\mathcal{D} \times \mathcal{D} &\to T_\C \mathscr{F}\\
        (u,v) &\mapsto [u,v] \text{ mod } \mathcal{D}
    \end{align}
    vanishes on this region. The O'Neill tensor is continuous, and so must vanish on the entirety of $\mathscr{F}|_U$. The open set $U$ was arbitrary, completing the proof that $\mathcal{D}$ is involutive. 
\end{proof}
\begin{prop}
    The distribution $\mathcal{D}$ is independent of initial choice of scale.
\end{prop}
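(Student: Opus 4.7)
The plan is to show that a change of scale shifts the horizontal lift of any complex $\alpha$-plane vector only in the direction of the tautological spinor $\pi^{A'}$, hence trivially after projectivization. Since $\mathcal{D} = \mathcal{D}_h + \mathcal{V}^{0,1}$ and both summands are scale-dependent through the spin connections induced by the scale, it suffices to prove the stronger statement that $\mathcal{D}_h$ itself is scale-invariant.

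The first step is to identify the change in the spin connection on $\co^{A'}$ under a rescaling $\epsilon \mapsto f\epsilon$, with $\Upsilon_a = f^{-1}\nabla_a f$. The $SL(p+q)$-invariance of the local twistor connection (\ref{eq:twistortransport}), combined with the transformation rule (\ref{eq:localtwistortransform}), forces the change on $\co^A$ to be $K_{AA'B}{}^C = \delta_A{}^C\,\Upsilon_{BA'}$. The analogous preservation $\nabla'\alpha(\epsilon')=0$ pins down the trace of the corresponding change on $\co^{A'}$, and the scale-invariance of the trace-free torsion (equivalently, the requirement that $Q_{[ab]}{}^c$ be pure trace) pins down its trace-free part, giving uniquely
\[
    \tilde{K}_{AA'B'}{}^{C'} \;=\; \delta_{A'}{}^{C'}\,\Upsilon_{AB'}.
\]

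The second step is the key computation. The difference between the two horizontal lifts of a tangent vector $v^{AA'}$ at $[\pi^{A'}] \in \mathscr{F}$ is, by definition, the image in $\co^{A'}/[\pi]$ of the vertical vector $v^{AA'}\,\tilde{K}_{AA'B'}{}^{C'}\,\pi^{B'}$. Specializing to $v^{AA'} = \mu^A\pi^{A'}$ in the complex $\alpha$-plane at $[\pi]$ and substituting the formula above yields
\[
    \mu^A\pi^{A'}\,\delta_{A'}{}^{C'}\,\Upsilon_{AB'}\,\pi^{B'} \;=\; \bigl(\mu^A\,\Upsilon_{AB'}\,\pi^{B'}\bigr)\,\pi^{C'},
\]
which is proportional to $\pi^{C'}$ and therefore vanishes modulo $[\pi]$. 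Hence the horizontal lift of every complex $\alpha$-plane direction is unaltered by the rescaling, so $\mathcal{D}_h$, and consequently $\mathcal{D}$, is independent of the initial choice of scale.

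The main obstacle I anticipate is the careful execution of the first step: accurately tracking conventions and verifying that the trace-free part of the correction on $\co^{A'}$ genuinely vanishes within the Bailey--Eastwood framework (or, equivalently, checking directly that the torsion is preserved to first order under our ansatz, i.e. $Q_{[ab]}{}^c=0$ for the stated $K,\tilde{K}$). Once the explicit form of $\tilde{K}$ is secured, the vanishing modulo $[\pi]$ is a one-line contraction that geometrically expresses the fact that rescaling acts on the relevant null directions only through a rescaling of the tautological spinor itself.
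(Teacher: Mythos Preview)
Your argument is correct. The anticipated obstacle dissolves: with $K_{AA'B}{}^C=\delta_A{}^C\Upsilon_{BA'}$ and $\tilde K_{AA'B'}{}^{C'}=\delta_{A'}{}^{C'}\Upsilon_{AB'}$ one has $Q_{ab}{}^c=Q_{ba}{}^c$, so $Q_{[ab]}{}^c=0$ identically and the Bailey--Eastwood uniqueness conditions hold on the nose; moreover the key contraction is robust, since any $\tilde K$ built from $\Upsilon$ and Kronecker deltas yields $\mu^A\pi^{A'}\pi^{B'}\tilde K_{AA'B'}{}^{C'}\propto\pi^{C'}$ (contracting with $\pi_{C'}$ always produces a factor $\pi^{A'}\pi_{A'}$ or $\pi^{B'}\pi_{B'}$). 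One small slip: $\mathcal V^{0,1}$ is \emph{not} scale-dependent---it is the antiholomorphic vertical tangent space of the $\CP^1$ fibres and involves no connection---which is exactly why it suffices to treat $\mathcal D_h$.

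The paper's route is genuinely different. Rather than computing the change of connection, it notes that along the equatorial section $F=\proj(H)\subset\mathscr F$ of real $\alpha$-planes, $\mathcal D_h|_F$ is scale-invariant because it admits a connection-free geometric description as the tangent distribution to lifted real $\alpha$-surfaces; it then invokes the fact, set up in the preceding proposition, that the generators $\mathfrak w_j$ are holomorphic (polynomial) in the fibre coordinate $\zeta$, so agreement for real $\zeta$ forces agreement everywhere by analytic continuation. Your direct computation is more self-contained and does not appeal to the existence of $\alpha$-surfaces (hence not to right-flatness), while the paper's argument is slicker in context and deliberately reuses the holomorphic-extension trick from the involutivity proof just above.
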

\begin{proof}
    Observe that the distribution $\mathcal{D}_h |_F$ is independent of scale, since it is defined geometrically as the distribution of tangent spaces to lifted $\alpha$-surfaces. If we choose some other scale and construct a new distribution $\hat{\mathcal{D}}$, then it must coincide with $\mathcal{D}$ along $F$. But as before, the coefficients are holomorphic in $\zeta$. Since they agree when $\zeta$ is real, they must agree for all $\zeta$. Therefore the two distributions agree on all of $\mathscr{F}$. 
\end{proof}

We can very nearly interpret $\mathcal{D}$ as the $T^{0,1}$ component of an almost-complex structure. The problem is that $\mathcal{D}$ is real along the equatorial section $\proj(H) \subset \mathscr{F}$, i.e. ${\mathcal{D}}|_{\proj(H)} = \bar{\mathcal{D}}|_{\proj(H)}$. But away from this section, we indeed have $\mathcal{D} \cap \bar{\mathcal{D}}$, and therefore an almost-complex structure which is integrable if and only if its associated almost-Grassmannian structure is right-flat. 

Since the bundle $H$ is orientable, the inclusion $\proj(H) \subset \mathscr{F}$ divides $\mathscr{F}$ into two connected components, each of which is a disk bundle over $\widetilde{Gr}(2, \R^{p+2})$. Our chosen scale determines a particular orientation on $H$, and thus an orientation on $\proj(H)$ in the sense that each fiber is an oriented circle. At the same time, each disk bundle induces an orientation on its boundary $\proj(H)$ according to the fiberwise complex structure. We may then define $\mathscr{F}_+$ as the component whose induced orientations agree. 

Now consider the quotient map $\Psi$ on $\mathscr{F}_+$ defined as the identity on the interior and $q : \proj(H) \to \RP^{p+1}$ along the boundary. The image, denoted by $\Z$, is a smooth $2p+2$-dimensional manifold. This may be seen via coordinate representation of the map near the boundary $\partial \mathscr{F}_+ \approx \RP^{p+1} \times S^p$,
\begin{align}
    \RP^{p+1} \times S^p \times [0,1) &\to \RP^{p+1} \times \R^{p+1}\\
    (p, \vec{x}, t) &\mapsto (p, t\hat{x})
\end{align}
The 4-dimensional case of the following theorem is proven in \cite{lm0702}, and from here the proof carries over without modification.
\begin{theorem}
    Let $\widetilde{Gr}(2, \R^{p+2})$ be endowed with a right-flat almost-Grassmannian structure near the standard one. Then $\Z = \Psi(\mathscr{F_+})$ obtained as above carries a unique complex structure such that $\Psi_* \mathcal{D} \subset T^{0,1} \Z$. 
\end{theorem}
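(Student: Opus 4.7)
The plan is to adapt the four-dimensional LeBrun-Mason construction referenced above. The strategy has two parts: first, build a smooth complex rank $p+1$ sub-bundle $E \subset T_\C \Z$ to serve as $T^{0,1}\Z$; second, verify that $E$ satisfies $E \cap \bar E = 0$ and is involutive, so that Newlander-Nirenberg upgrades $E$ to a genuine integrable complex structure. Uniqueness is essentially formal: if $J$ is any almost-complex structure on $\Z$ with $\Psi_*\mathcal{D} \subset T^{0,1}_J$, then because $\Psi$ is a diffeomorphism on the interior $\mathscr{F}_+^\circ$ and $\mathcal{D}$ already has complex rank $p+1$ there, $T^{0,1}_J = \Psi_*\mathcal{D}$ on this dense open, and equality on all of $\Z$ follows by continuity.

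On the interior I simply set $E = \Psi_*\mathcal{D}$. The preceding propositions already supply what is needed: $\mathcal{D}$ has complex rank $p+1$, its intersection with $\bar{\mathcal{D}}$ is trivial away from the equator $\proj(H)$, and the right-flatness hypothesis makes $\mathcal{D}$ involutive. Hence $\Z \setminus \RP^{p+1}$ inherits an integrable almost-complex structure directly, and it only remains to glue this across the image of the collapsed boundary.

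The technical heart of the argument is the smooth extension of $E$ across $\RP^{p+1} \subset \Z$. Using the coordinates indicated in the text I would model a neighbourhood of $\partial \mathscr{F}_+$ as $\RP^{p+1} \times S^p \times [0,1)$ sent to $\RP^{p+1} \times \R^{p+1}$ by $(p, \hat{x}, t) \mapsto (p, t\hat{x})$, and expand the spanning vector fields $\mathfrak{w}_1, \ldots, \mathfrak{w}_p, \partial/\partial \bar\zeta$ of $\mathcal{D}$ in these coordinates to analyse their pushforwards as $t \to 0$. The structural fact that drives the extension is that along $\partial\mathscr{F}_+$ the distribution $\mathcal{D}$ contains precisely the complexified tangent spaces of the lifted $\alpha$-surfaces being collapsed by $q$; consequently $\ker(\Psi_*) \cap T_\C \partial\mathscr{F}_+$ is a complex rank-$p$ subspace sitting inside the self-conjugate rank $p+1$ fibre $\mathcal{D}|_{\partial\mathscr{F}_+}$. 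Quotienting out this kernel leaves a rank-one transverse direction which, together with the $p$ complex directions arising as radial limits of interior horizontal lifts, assembles fibrewise into a complex rank $p+1$ subspace of $T_\C \Z$ over each boundary point. Smoothness of this assembly is the delicate step: it relies on the fact, already exploited in the proof of involutivity, that the coefficients of the $\mathfrak{w}_j$ are polynomial in $\zeta$, so that the apparent $1/t$ singularity produced by passing from polar coordinates $(\hat{x}, t)$ to the Cartesian coordinates $t\hat{x}$ is cancelled by corresponding vanishing in the numerator.

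Once $E$ is known to be a smooth rank $p+1$ complex sub-bundle on all of $\Z$, the remaining checks are soft: $E \cap \bar E = 0$ and involutivity hold on the dense interior and extend to $\RP^{p+1}$ by continuity of the respective obstruction tensors, and Newlander-Nirenberg then supplies the integrable complex structure. The main obstacle throughout is the coordinate verification of smoothness across $\RP^{p+1}$; everything else in the argument is essentially formal once that is in hand.
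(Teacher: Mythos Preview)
Your outline is essentially correct and matches the paper's approach, with the caveat that the paper does not supply an independent proof at all: it simply asserts that the four-dimensional argument of LeBrun--Mason \cite{lm0702} carries over without modification. What you have written is a faithful sketch of that referenced argument, and you correctly isolate the one nontrivial step---the smooth extension of $\Psi_*\mathcal{D}$ across the collapsed boundary $\RP^{p+1}$---as the place where all the work lies.
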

The complex manifold $\Z$ is our twistor space. Observe that a right-flat deformation of the almost-Grassmannian structure corresponds to a complex deformation of $\mathscr{F}$ and thus a deformation of $\Z$. But $\CP^{p+1}$ is biholomorphically rigid, yielding the following corollary. 

\begin{cor}
    The twistor space of $\widetilde{Gr}(2, \R^{p+2})$ equipped with a right-flat almost-Grassmannian structure near the standard one is biholomorphic to $\CP^{p+1}$. 
\end{cor}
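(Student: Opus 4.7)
The plan is to combine three ingredients: an explicit identification of the twistor space in the flat case, stability of the underlying smooth manifold under small right-flat deformations, and Kodaira's deformation rigidity of $\CP^n$.

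First I would verify the base case. For the canonical almost-Grassmannian structure on $\widetilde{Gr}(2, \R^{p+2})$, the natural complexification is $Gr(2, \C^{p+2})$, whose complex $\alpha$-surfaces are parametrized by lines in $\C^{p+2}$, i.e.\ by $\CP^{p+1}$. Running the construction $\Z = \Psi(\mathscr{F}_+)$ in the standard case, one identifies $\Z$ with $\CP^{p+1}$ in a natural way: the interior of $\mathscr{F}_+$ maps biholomorphically to $\CP^{p+1} \setminus \RP^{p+1}$, and the boundary quotient $\proj(H) \to \RP^{p+1}$ recovers the standard inclusion of $\RP^{p+1}$ as the real locus of $\CP^{p+1}$.

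Next I would argue that the underlying smooth manifold of $\Z$ does not change for nearby deformations. By the proposition preceding the theorem, the real twistor space is diffeomorphic to $\RP^{p+1}$ for any sufficiently small right-flat deformation (via Reeb stability), so the boundary quotient $\Psi\colon \partial \mathscr{F}_+ \to \RP^{p+1}$ varies smoothly with the deformation parameter. Consequently $\Psi(\mathscr{F}_+)$ remains diffeomorphic to $\CP^{p+1}$ throughout, and the complex structures supplied by the theorem may be viewed as a continuous family $J_t$ of integrable almost-complex structures on this fixed smooth manifold, with $J_0$ the standard one.

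I would then invoke Kodaira's rigidity theorem: since $H^1(\CP^n, T\CP^n) = 0$ (immediate from the Euler sequence), every sufficiently small deformation of the standard complex structure on $\CP^{p+1}$ is biholomorphic to $\CP^{p+1}$ itself. Applying this to the family $J_t$ yields the corollary.

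The hard part is the middle step. One must verify that the construction of $(\Z, J)$ out of the almost-Grassmannian data is sufficiently continuous that ``near the standard structure'' in the space of right-flat almost-Grassmannian structures on $\widetilde{Gr}(2, \R^{p+2})$ genuinely translates to ``near $J_0$'' in the moduli of complex structures on $\CP^{p+1}$. This reduces to continuous dependence of the quotient map $\Psi$ and the distribution $\mathcal{D}$ on the underlying connection data, which is routine but tedious.
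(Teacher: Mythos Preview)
Your proposal is correct and follows essentially the same approach as the paper: identify the twistor space of the flat model with $\CP^{p+1}$, observe that a right-flat deformation of the almost-Grassmannian structure induces a deformation of the complex structure on $\Z$, and then invoke the biholomorphic rigidity of $\CP^{p+1}$ (equivalently $H^1(\CP^{p+1}, T\CP^{p+1}) = 0$). The paper's argument is the two-sentence remark immediately preceding the corollary; you have simply been more explicit about the base case and the smooth stability of $\Z$ under deformation, which the paper leaves implicit.
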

Although the twistor spaces are indistinguishable, they do come with additional data. Namely, there is a real slice $\Psi(\partial \mathscr{F}_+) \approx \RP^{p+1} \subset \CP^{p+1}$, which is deformed along with the almost-Grassmannian structure. The canonical almost-Grassmannian structure corresponds to the standard embedding $\RP^{p+1} \subset \CP^{p+1}$, and in general the real slice allows us to completely reconstruct the original almost-Grassmannian structure, as we now demonstrate. 

\begin{theorem}\label{thm:inverse}
    Let $P \subset \CP^{p+1}$ be the image of a smooth embedding $\RP^{p+1} \hookrightarrow \CP^{p+1}$ near the standard one. Then the family of embedded holomorphic disks with boundary along $P$ is a smooth manifold diffeomorphic to $\widetilde{Gr}(2, \R^{p+2})$, which carries a natural right-flat almost-Grassmannian structure. 
\end{theorem}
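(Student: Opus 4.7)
The plan is to invert the twistor construction by studying the moduli of holomorphic disks with boundary on $P$ directly, adapting the LeBrun--Mason strategy of \cite{lm0702} to $p > 2$ via the regularity theory of \cite{leb05}. For the standard embedding $P_0 = \RP^{p+1} \subset \CP^{p+1}$, I would begin by noting that a holomorphic disk with boundary on $P_0$ must be a hemisphere of a complex projective line meeting $\RP^{p+1}$ in a real projective line, and conversely each such line contributes two disks. These lines correspond to real $2$-planes in $\R^{p+2}$, and the choice of hemisphere amounts to an orientation on the $2$-plane, yielding the identification of the standard moduli with $\widetilde{Gr}(2,\R^{p+2})$.

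To establish smoothness for embeddings $P$ near $P_0$, I would compute the linearized $\bar{\partial}$-problem. Along each standard disk, the pullback $u^{*}T\CP^{p+1}$ extends over the doubled $\CP^1$ with splitting type $\co(2) \oplus \co(1)^{\oplus p}$, and the Lagrangian boundary condition imposed by $\RP^{p+1}$ has Maslov indices $(2,1,\ldots,1)$, all positive. The linearized operator is therefore surjective, with unparametrized index $(p+2) + (p+1) - 3 = 2p = \dim\widetilde{Gr}(2,\R^{p+2})$. Parametric transversality, as in \cite{leb05,lm0702}, gives a smooth Banach manifold of pairs $(P,D)$ whose fibers over the space of embeddings are smooth $2p$-manifolds; the implicit function theorem then identifies the moduli with $\widetilde{Gr}(2,\R^{p+2})$ for all $P$ in a neighborhood of $P_0$.

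The almost-Grassmannian structure is then installed by interpreting $T_D M$ as the real vector space of holomorphic sections of the normal bundle $N_D$ satisfying the boundary condition inherited from $P$. Since the splitting type of $N_D$ is $\co(1)^{\oplus p}$, evaluation of sections of $N_D \otimes \co(-1)$ yields a canonical factorization $N_D \cong E_D \otimes \co(1)$, where $E_D := H^0(\CP^1; N_D \otimes \co(-1))_\R$ is a rank-$p$ real vector space, while the boundary-real sections of $\co(1)$ over the doubled sphere form a 2-dimensional real space $H_D$. Combining,
\[
    T_D M \;\cong\; E_D \otimes H_D,
\]
which defines the desired $(p,2)$ almost-Grassmannian structure; the compatibility $\wedge^p E \cong \wedge^2 H$ descends from the standard volume form on $\R^{p+2}$.

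Finally, right-flatness follows from theorem \ref{thm:rightflat} by exhibiting the $\alpha$-surfaces: for each $q \in P$, the condition $q \in \partial D$ cuts out a $p$-dimensional submanifold $\{D : q \in \partial D\} \subset M$ whose tangent at $D$ is the null $p$-plane $E_D \otimes h_q$, where $h_q \subset H_D$ is the line of sections vanishing at $q$. Since every $\alpha$-plane is of this form, every $\alpha$-plane is tangent to an $\alpha$-surface. The main obstacle will be propagating the tensor factorization $N_D \cong E_D \otimes \co(1)$ smoothly across the moduli as $P$ is deformed, since no explicit splitting $\oplus^p \co(1)$ is canonical; however, because $E_D$ is defined intrinsically by cohomology and the splitting type is stable under small perturbations, smooth dependence follows from the parametric regularity framework.
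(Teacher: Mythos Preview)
Your argument is correct but follows a genuinely different route from the paper's. You install the almost-Grassmannian structure fiberwise, by factoring the normal bundle of each disk as $N_D \cong E_D \otimes \co(1)$ via the cohomology group $H^0(N_D \otimes \co(-1))_\R$, and you obtain right-flatness from the explicit incidence varieties $\{D : q \in \partial D\}$ for $q \in P$. The paper instead reconstructs the correspondence space: it forms the tautological closed-disk bundle $\mathscr{F}_+$ over the moduli, takes $\mathcal{D} = \ker \Psi_*^{1,0}$ for the evaluation map $\Psi : \mathscr{F}_+ \to \CP^{p+1}$, doubles to a $\CP^1$-bundle $\mathscr{F}$, and then shows that the induced map $\mathscr{F} \to Gr(p, T_\C M)$ embeds each fiber as a degree-$p$ rational normal curve (the degree coming from a Maslov/Chern-class computation, and the embedding from $\mathcal{D} \cap \bar{\mathcal{D}} = 0$ off the equator); a separate proposition then identifies such curves with tensor decompositions $\C^{2p} \cong \C^2 \otimes \C^p$. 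The diffeomorphism with $\widetilde{Gr}(2,\R^{p+2})$ is also handled differently: rather than your parametric-transversality argument, the paper observes that each disk meets the standard complex quadric $Q \subset \CP^{p+1}$ in exactly one point, giving a direct identification of the moduli with $Q$ as a real manifold. Your approach is more elementary and closer to classical Kodaira-style deformation theory; the paper's has the advantage of being visibly the inverse of the forward construction (one literally recovers the distribution $\mathcal{D}$), and the rational-normal-curve device cleanly excludes degenerate factorizations. One point worth tightening in your version: the twisting line bundle $\co(-1)$ used to define $E_D$ should be specified uniformly in $D$ (for instance as the restriction of the tautological bundle on $\CP^{p+1}$, with its induced real structure along $P$), since otherwise the smooth variation of $E$ and $H$ over the moduli is not automatic.
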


The regularity theory developed in \cite{leb05} guarantees that our moduli of disks forms a smooth manifold. In particular, it is a consequence of the following theorem.
\begin{theorem}[LeBrun]
    Let $(Z,P)$ denote a complex manifold $Z$ with a totally real submanifold $P$. Suppose that $(X, \partial X) \subset (Z,P)$ is an embedded curve with boundary along $P$. If $H^1(\mathbb{X}, \co(\mathcal{N})) = 0$, where $\mathbb{X}$ is the abstract double of $X$ and $\mathcal{N}$ is the normal bundle, then any small deformation $(Z', P')$ contains a $h^0(\mathbb{X}, \co(\mathcal{N}))$ real dimensional family of curves-with-boundary obtained by deforming $X$. 
\end{theorem}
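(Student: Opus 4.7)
The plan is to prove this via a doubling construction together with the implicit function theorem for Fredholm maps between Banach manifolds. The totally-real hypothesis on $P$ lets us perform a Schwarz-type reflection: near $\partial X$, the normal bundle $N \to X$ can be extended across the boundary using an anti-holomorphic involution reflecting $X$ across $\partial X$ and $Z$ across $P$. Concretely, one builds $\mathbb{X} = X \cup_{\partial X} \bar{X}$ and a holomorphic vector bundle $\co(\mathcal{N}) \to \mathbb{X}$ that restricts to $N$ on $X$ and to its complex-conjugate on $\bar{X}$, with the real structure on $\mathcal{N}|_{\partial X}$ given by the totally-real subbundle $TP \cap \mathcal{N}$. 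The virtue of this is that real-linear Cauchy-Riemann operators on $N \to X$ with totally-real boundary conditions in $TP$ are in one-to-one correspondence with honest $\bar{\partial}$-operators on $\co(\mathcal{N}) \to \mathbb{X}$ that commute with the real structure.

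Next I would set up the problem functional-analytically. Fix a H\"{o}lder (or Sobolev) Banach manifold $\mathcal{M}$ of maps $u : (X, \partial X) \to (Z, P)$ of the relevant regularity class and an auxiliary Banach space parameterizing small deformations $(Z', P')$. Consider the nonlinear operator
\begin{equation}
    \mathcal{F} : \mathcal{M} \times \{\text{deformations}\} \longrightarrow \mathcal{E}, \qquad \mathcal{F}(u, (Z', P')) = \bar{\partial}_{J'} u,
\end{equation}
where $\mathcal{E}$ is a Banach bundle of $(0,1)$-forms with values in $u^* TZ'$, together with the boundary constraint $u(\partial X) \subset P'$. At the base point $(u_0, (Z, P))$ the $u$-linearization is a real-linear Cauchy-Riemann operator on sections of $u_0^* TZ$ with totally-real boundary values in $u_0^* TP$. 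Decomposing along the embedding $X \hookrightarrow Z$ splits off the tangential deformations of $X$ (absorbed by reparameterizations) and isolates the essential linearization as the doubled $\bar{\partial}$-operator on $\co(\mathcal{N}) \to \mathbb{X}$.

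The third step is to invoke Riemann-Roch on the closed Riemann surface $\mathbb{X}$: this operator is Fredholm of real index $h^0(\mathbb{X}, \co(\mathcal{N})) - h^1(\mathbb{X}, \co(\mathcal{N}))$. The hypothesis $H^1(\mathbb{X}, \co(\mathcal{N})) = 0$ forces the cokernel to vanish, so the linearization is surjective. The standard Banach implicit function theorem then cuts out a smooth submanifold of $\mathcal{M} \times \{\text{deformations}\}$ of the expected dimension, and projecting to the deformation parameter exhibits, for each small $(Z', P')$, a smooth family of deformed curves-with-boundary of real dimension $h^0(\mathbb{X}, \co(\mathcal{N}))$.

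The main obstacle I anticipate is the careful bookkeeping around the varying target. One must choose function spaces for which both the boundary constraint $u(\partial X) \subset P'$ and the $\bar{\partial}$-equation vary smoothly in the deformation parameter; a clean way is to trivialize a tubular neighborhood of the initial $(X, \partial X, Z, P)$, parameterize $P'$ as a graph over $P$ in a totally-real normal bundle, and absorb the $P'$-dependence into the boundary condition by a family of diffeomorphisms. The genuinely delicate analytic point is showing that the doubled problem remains elliptic with boundary conditions that depend continuously on the deformation — this is precisely where totally-realness (an open condition) is used — after which the Fredholm and implicit function machinery runs in the standard way.
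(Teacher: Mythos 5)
The paper does not actually prove this statement: it is imported verbatim from LeBrun's paper \cite{leb05} as a black box, so there is no internal argument to compare against. Your outline — Schwarz reflection of the normal bundle across the totally real boundary to get $\co(\mathcal{N})\to\mathbb{X}$, identification of real-linear Cauchy--Riemann boundary value problems with involution-equivariant $\bar{\partial}$-operators on the double, Riemann--Roch giving real index $h^0-h^1$, surjectivity of the linearization from $H^1(\mathbb{X},\co(\mathcal{N}))=0$, and a parameterized implicit function theorem over the deformation space of $(Z',P')$ — is exactly the strategy of LeBrun's original proof and is correct as a sketch, the only implicit hypothesis worth flagging being that $P$ must be totally real of maximal dimension for the boundary condition $TP\cap\mathcal{N}$ to have the right rank and the doubled problem to be elliptic.
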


For the canonical almost-Grassmannian structure, the disks come in conjugate pairs forming closed $\CP^1$'s. Each such curve must intersect the standard complex quadric $Q = \{z_0^1 + \cdots + z_{p+1}^2 = 0 \} \subset \CP^{p+1}$ at a conjugate pair of points; one per hemisphere. A deformed disk will continue to intersect $Q$ at a single point, and conversely, since the disks must foliate $\CP^{p+1} - P$, every point along $Q$ corresponds to a disk. We therefore have a smooth family of compact manifolds, so they are all diffeomorphic to one another. This establishes the required diffeomorphism. 

Now it is only a matter of producing the almost-Grassmannian structure. It will be convenient to continue identifying the parameter space of disks with $Q$ (as a real manifold). This space carries a tautological closed disk bundle $\mathscr{F}_+ \to Q$ with a map $\Psi : \mathscr{F}_+ \to \CP^{p+1}$. By construction, $\Psi$ is a diffeomorphism on the interior of $\mathscr{F}_+$. After complexifing the tangent bundles, we have a map $\Psi_* : T_\C \mathscr{F}_+ \to T_\C \CP^{p+1}$. Let $\Psi_*^{1,0}$ denote the composition of $\Psi_*$ with projection onto the holomorphic tangent space $T^{1,0} \CP^{p+1}$ and set $\mathcal{D} = \text{ker}~\Psi_*^{1,0}$. If we assume that $\Psi$ is $C^1$ close to that of the flat model, then $\Psi_*$ will also have maximal rank, so that $\mathcal{D}$ is a rank $p+1$ complex distribution on all of $\mathscr{F}_+$. The boundary $\partial \mathscr{F}_+$, which is a $2p+1$-dimensional real manifold, is mapped to $\RP^{p+1}$ and therefore 
\begin{equation}
    E := \text{ker}~ \Psi_* |_{\partial \mathscr{F}_+}
\end{equation}
has rank at least $p$. On the other hand, $\Psi$ is fiberwise holomorphic, and so $\mathcal{D}$ contains the vertical tangent space $\mathcal{V}^{0,1}$ of the fibers. Thus $(E \otimes \C) \oplus \mathcal{V}^{0,1} \subset \mathcal{D}$, and so $E$ must have rank exactly $p$.

Now form the abstract double $\mathscr{F}$ of $\mathscr{F}_+$, by taking a duplicate copy $\mathscr{F}_-$ of $\mathscr{F}_+$ and gluing them together along their boundaries. The distribution $\mathcal{D}$ extends over the double by defining it to be the conjugate $\bar{\mathcal{D}}$ on $\mathscr{F}_-$. We then have a $\CP^1$ bundle $\wp : \mathscr{F} \to Q$ with a distribution of complex $p+1$ planes. This distribution is furthermore involutive away from the equatorial section. 

\begin{prop}\label{prop:maslov}
    The evaluation of $c_1(\mathcal{D})$ on a fiber of $\wp$ is $-p-2$. 
\end{prop}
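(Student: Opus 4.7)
The plan is to realize $\mathcal{D}|_F$ as an extension of two simpler complex bundles on $F$ and then invoke additivity of $c_1$. Writing $x = \wp(F)$ so that $F$ is canonically identified with $\proj(\co^{A'}_x) \cong \CP^1$, I would begin from the tangent sequence of the fibration $\wp$,
\[
0 \to T_\C F \to T_\C \mathscr{F}|_F \to \wp^* T_\C Q|_F \to 0,
\]
whose right-hand term is the constant bundle on $F$ with fiber $T_{\C,x}Q = \co^A_x \otimes \co^{A'}_x$. Intersecting with the sub-distribution $\mathcal{D}|_F$ produces the exact sequence
\[
0 \to \mathcal{D}|_F \cap T_\C F \to \mathcal{D}|_F \to \wp_*\mathcal{D}|_F \to 0.
\]

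Next I would identify both outer terms. The kernel is $T^{0,1}F$: the summand $\mathcal{V}^{0,1}|_F = T^{0,1}F$ is contained in $\mathcal{D}$ by construction, while the horizontal summand $\mathcal{D}_h$ is transverse to $T_\C F$ by definition. The image $\wp_*\mathcal{D}|_F$ is the bundle of complex $\alpha$-planes; at $[\pi] \in F$ it equals $\{X^A\pi^{A'} : X \in \co^A_x\} = \co^A_x \otimes \C\pi$, and as $[\pi]$ varies over $F = \proj(\co^{A'}_x)$ these assemble into $\co^A_x \otimes \co(-1)$, where $\co(-1) \to F$ is the tautological line bundle. Additivity of $c_1$ applied to
\[
0 \to T^{0,1}F \to \mathcal{D}|_F \to \co^A_x \otimes \co(-1) \to 0
\]
gives $c_1(\mathcal{D}|_F) = c_1(T^{0,1}F) + c_1(\co^A_x \otimes \co(-1))$. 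Using $T^{1,0}F = \co(2)$ on $F \cong \CP^1$ yields $c_1(T^{0,1}F) = -2$, and since $\co^A_x$ is a trivial rank-$p$ bundle over $F$, $c_1(\co^A_x \otimes \co(-1)) = p \cdot c_1(\co(-1)) = -p$. Summing produces $-(p+2)$, which is the claimed evaluation.

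The main item to verify carefully is that the image $\wp_*\mathcal{D}|_F$ is genuinely $\co^A_x \otimes \co(-1)$ as an intrinsic bundle, independent of the choice of scale implicit in defining $\mathcal{D}_h$. This follows from the preceding proposition that $\mathcal{D}$ itself is scale-independent: once $\mathcal{D}$ is canonical, so is its image under the tautological projection $\wp_*$, and the $\alpha$-plane description of that image is determined purely by the almost-Grassmannian data. I emphasize that no use of the right-flat hypothesis enters, so the computation gives $c_1(\mathcal{D})|_F = -(p+2)$ in the full generality of the setup.
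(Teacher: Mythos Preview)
Your argument is internally consistent but it addresses the wrong $\mathcal{D}$. The proposition sits inside the \emph{inverse} construction: here $\wp:\mathscr{F}\to Q$ is the doubled tautological disk bundle over the moduli space $Q$ of holomorphic disks with boundary on an arbitrary totally-real $P\hookrightarrow\CP^{p+1}$, and $\mathcal{D}=\ker\Psi_*^{1,0}$. At this stage $Q$ does \emph{not} yet carry an almost-Grassmannian structure; the whole purpose of the proposition is to feed into the degree computation $c_1(\mho)=-p$ which then shows $\iota\circ\Phi$ is fiberwise a degree-$p$ rational normal curve, and only \emph{then} is the factorization $T_xQ\cong\co^A_x\otimes\co^{A'}_x$ produced. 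Your identification $\wp_*\mathcal{D}|_F\cong\co^A_x\otimes\co(-1)$ therefore presupposes exactly what the proposition is meant to help establish, so the argument is circular. The references you make to $\mathcal{D}_h$, to the ``preceding proposition'' on scale-independence, and to the $\alpha$-plane description all belong to the forward construction earlier in the section, where one starts from a given almost-Grassmannian structure.

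The paper's proof avoids this by using data intrinsic to the disk in $(\CP^{p+1},P)$: the doubled normal bundle splits as $\bigoplus\co(\kappa_j)$ and the Maslov index $\kappa=\sum\kappa_j$ is a deformation invariant, equal to $p$ for the standard $P$ and hence for all nearby $P$; an argument from \cite{lm0702} then converts $\kappa=p$ into $c_1(\mathcal{D})|_F=-(p+2)$. Your short exact sequence $0\to\mathcal{V}^{0,1}\to\mathcal{D}\to\mho\to 0$ is valid in the inverse setting and is indeed how the paper later extracts $c_1(\mho)=-p$, but to make your approach non-circular you would need an independent computation of $c_1(\mho|_F)$, and the Maslov-index route (or some other deformation-invariance argument) appears to be the natural way to supply it.
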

\begin{proof}
    Consider a fiber of $\mathscr{F}_+$. By extending the normal bundle of this disk in $\CP^{p+1}$ across the abstract double, we get a splitting $N = \co(\kappa_1) \oplus \cdots \oplus \co(\kappa_p)$. The sum $\kappa = \kappa_1 + \cdots + \kappa_p$ is called the Maslov index of the disk, and for the standard $\RP^{p+1} \subset \CP^{p+1}$, we have $\kappa_j = 1$ so $\kappa = p$. The Maslov index is invariant under complex deformations \cite{ms04} and so $\kappa = p$ for all $P$ near the standard $\RP^{p+1}$. The argument in \cite{lm0702} then implies $c_1(\mathcal{D})$ on a fiber of $\wp$ is $-p-2$. 
\end{proof}

Consider now the rank $p$ distribution $\mho = \mathcal{D} / \mathcal{V}^{0,1}$. By construction, $\mho$ is mapped injectively by $\wp_*$ into $T_\C Q$, and we therefore obtain a map 
\begin{align}
    \Phi : \mathscr{F} &\to Gr(p, T_\C Q) \nonumber \\
    x &\mapsto \wp_*(\mho|_x) = \wp_*(\mathcal{D}|_x)
\end{align}
This map is holomorphic on the interior of $\mathscr{F}_+$. To see this, let $\zeta$ be a holomorphic fiber coordinate on $\mathscr{F}_+$ and let $\mathfrak{w}_j$ be $p$ sections of $\mathcal{D}$ which, with $\partial / \partial \bar{\zeta}$, span $\mathcal{D}$. Then since $\mathcal{D}$ is involutive, 
\begin{equation}
    \frac{\partial}{\partial \bar{\zeta}} (\wp_* \mathfrak{w}_j) = \wp_* (\mathcal{L}_{\partial / \partial \bar{\zeta}}\mathfrak{w}_j) = \wp_* ([\partial / \partial \bar{\zeta}, \mathfrak{w}_j]) \equiv 0 ~\text{mod span}~ \{\mathfrak{w}_1, \ldots, \mathfrak{w}_p\}, 
\end{equation}
which implies that $\Phi$ is fiberwise holomorphic on the interior of $\mathscr{F}_+$. Then $\Phi$ must be fiberwise holomorphic on the interior of $\mathscr{F}_-$ as well, and by continuity must then be fiberwise holomorphic on the entirety of $\mathscr{F}$. 

Let $\iota : Gr(p, T_\C Q) \hookrightarrow \proj(\wedge^p T_\C Q)$ denote the Plucker embedding, and consider $\iota \circ \Phi$. We will complete the proof by showing that $\iota \circ \Phi$ embeds each fiber as a rational normal curve and that such an embedding defines an almost-Grassmannian structure, whose $\alpha$-planes are integrable by construction. Since $c_1(\mathcal{V}^{0,1}) = -2$ on a fiber of $\wp$, proposition (\ref{prop:maslov}) tells us that $c^1(\mho) = -p$. Note that the tautological bundle $\co(-1)$ on $\proj(\wedge^p T_\C Q)$ pulls back via $\iota$ to the tautological $p$-plane bundle over $Gr(p, T_\C Q)$, and therefore $(\iota \circ \Phi)^*\co(-1) = \wedge^p \mho$. This implies that $\iota \circ \Phi$ is fiberwise a degree $p$ map. 

\begin{prop}\label{prop:ratnormal}
    Isomorphisms $\C^{2p} \cong \C^2 \otimes \C^p$ are in one-to-one correspondence with degree $p$ rational normal curves in $Gr(p, T_\C Q) \subset \proj(\wedge^p T_\C Q)$. 
\end{prop}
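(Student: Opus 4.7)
The plan is to construct explicit maps in both directions and verify they are mutually inverse. Working at a single point, identify $T_\C Q \cong V := \C^{2p}$.

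For the forward direction, given an isomorphism $\phi: V \xrightarrow{\sim} \C^2 \otimes \C^p$, I would define the map $\proj \C^2 \to Gr(p, V)$ by $[v] \mapsto \phi^{-1}(v \otimes \C^p)$. Since $\wedge^p(v \otimes \C^p) = v^{\otimes p} \otimes \det \C^p$, composition with Pl\"ucker lands in the linearly embedded $\proj^p \cong \proj(\wedge^p \phi^{-1}(\odot^p \C^2 \otimes \det \C^p)) \subset \proj(\wedge^p V)$ as the standard Veronese of $\proj \C^2$, hence as a degree-$p$ rational normal curve.

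For the reverse direction, let $C \subset Gr(p, V) \subset \proj(\wedge^p V)$ be a degree-$p$ RNC. Since every degree-$p$ RNC in $\proj^p$ is the Veronese image of a unique $\proj \C^2$, the spanning $\proj^p$ of $C$ canonically determines $\C^2$ up to scalar, along with a parametrization $\CP^1 = \proj \C^2 \xrightarrow{\sim} C$. Pulling back the tautological subbundle $S \subset V \otimes \co_{Gr}$ yields a rank-$p$ subbundle $S' \hookrightarrow V \otimes \co_{\CP^1}$, which by Grothendieck splits as $S' \cong \bigoplus_{i=1}^p \co(-a_i)$ with $a_i \geq 0$ and $\sum a_i = p$ (the latter from $\wedge^p S' \cong \co(-p)$). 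Assuming the balanced splitting $a_i = 1$ for all $i$, the quotient $Q' := (V \otimes \co)/S' \cong \co(1)^{\oplus p}$, and the cohomology long exact sequence of the dualized SES
\[
    0 \to (Q')^* \to V^* \otimes \co \to (S')^* \to 0,
\]
together with $H^0((Q')^*) = H^1((Q')^*) = 0$ (since $(Q')^* \cong \co(-1)^{\oplus p}$), produces a canonical isomorphism
\[
    V^* \xrightarrow{\sim} H^0((S')^*) = H^0(\co(1)) \otimes \C^{p*} \cong \C^{2*} \otimes \C^{p*}.
\]
Dualizing gives the desired $V \cong \C^2 \otimes \C^p$, and a direct check verifies that the two constructions are mutually inverse: the $\C^2$ is recovered as the Veronese parameter space of $C$ in its span, and the $\C^p$ as the constant sections of $(S')^* \otimes \co(-1)$ under the balanced splitting.

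The main obstacle is establishing the balanced splitting $S' \cong \co(-1)^{\oplus p}$. This can be handled by semicontinuity of the splitting type: the standard flat model exhibits precisely the balanced type, and any unbalanced splitting is a closed (non-generic) condition on the moduli of rank-$p$ degree-$-p$ subbundles of a trivial bundle, so balanced splitting persists for RNCs near the standard one, which is the regime relevant to the paper. Alternatively, one can rule out unbalanced cases directly: if some $a_i = 0$ then $S'$ contains a constant trivial line, forcing $C$ to lie in a sub-Grassmannian $\{W : W \supset \ell\}$ for some fixed line $\ell \subset V$, which is incompatible with the RNCs produced by the construction in the preceding paragraphs. With all $a_i$ equal to $1$, the remaining cohomological argument is formal.
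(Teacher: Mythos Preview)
Your forward direction is essentially the paper's: both send $[v]\in\proj\C^2$ to the $p$-plane $v\otimes\C^p$ and observe that the Pl\"ucker image is the Veronese in $\proj(\odot^p\C^2\otimes\wedge^p\C^p)$. The reverse directions differ. The paper simply invokes that any two degree-$p$ rational normal curves are projectively equivalent in $\proj(\wedge^p\C^{2p})$ and then asserts that the equivalence is a change of basis for $\C^{2p}$; this last step is not justified, since $PGL(\wedge^p\C^{2p})$ is much larger than the image of $GL(\C^{2p})$. Your approach---pulling back the tautological subbundle, Grothendieck-splitting it, and reading off $V^*\cong H^0((S')^*)\cong (\C^2)^*\otimes(\C^p)^*$ from the cohomology of the dual tautological sequence---is cleaner and makes the actual content visible: everything hinges on the balanced splitting $S'\cong\co(-1)^{\oplus p}$.

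Your semicontinuity argument for the balanced splitting is correct and is exactly what the paper's application needs, since only curves near the standard one arise. Your alternative ``direct'' argument, however, is circular: in the reverse direction you begin with an arbitrary degree-$p$ RNC, so observing that an unbalanced one is ``incompatible with the RNCs produced by the construction'' proves nothing about the given curve. In fact unbalanced examples exist, so the proposition as literally stated is false: for $p=2$, any smooth conic in the $\proj^2\subset Gr(2,\C^4)$ of planes containing a fixed line $\ell$ is a degree-$2$ RNC in $\proj(\wedge^2\C^4)$ with $S'\cong\co\oplus\co(-2)$, and it visibly does not come from a tensor decomposition. The paper's overall argument survives because, immediately after the proposition, it separately checks via $\mathcal{D}\cap\bar{\mathcal{D}}=0$ that the relevant $p$-planes have trivial common intersection---precisely the condition ruling out any $a_i=0$ summand. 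You should either rest on the semicontinuity argument alone, or replace the circular clause by the genuine hypothesis ``no common line'', which forces all $a_i\geq 1$ and hence $a_i=1$.
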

\begin{proof}
    Let $\{u_1, \ldots, u_{2p}\}$, $\{v_1, v_{2}\}$, and $\{w_1, \ldots, w_{p}\}$ be bases such that the isomorphism is given by $u_j = v_1 \otimes w_j, u_{j+p} = v_2 \otimes w_j$ for $1 \leq j \leq p$. Then there is a natural map 
    \begin{align*}
        \CP^1 &\to Gr(p, T_\C Q)\\
        [z_0, z_1] &\mapsto \text{span}\{(z_0v_1 + z_1v_2) \otimes w_1, \ldots, (z_0v_1 + z_1v_2) \otimes w_p\}\\
        &= \text{span}\{z_0 u_1 + z_1u_{p+1}, \ldots, z_0 u_p + z_1u_{2p} \}.
    \end{align*}
    The image under the Plucker embedding is then 
    \begin{equation}
        (z_0^pv_1^p + z_0^{p-1}z_1 v_1^{p-1}v_2 + \cdots + z_1^p v_2^{p}) \otimes (\wedge_{j=1}^p w_j),
    \end{equation}
    which is manifestly a degree $p$ rational curve. It is in fact a rational normal curve in the subspace $\proj(\odot^p\C^2 \otimes \wedge^p \C^p)$. Conversely, all rational normal curves are projectively equivalent, and all projective subspaces of a fixed dimension are also projectively equivalent. Thus, there exists some projective linear transformation of $\proj(\wedge^p \C^{2p})$ taking this curve to any other such curve. This is a change of basis for $\C^{2p}$, providing the required isomorphism. 
\end{proof}

With proposition (\ref{prop:ratnormal}), we just need to show that our degree $p$ maps are embeddings rather than ramified covers of lower degree curves. Suppose that a fiber maps onto a curve of lower degree $p' < p$. A degree $p'$ curve lives in a $p'$ dimensional projective subspace, and therefore the $p$-planes corresponding to this curve must share a common $p-p'$ dimensional subspace. But $\mathcal{D} \cap \bar{\mathcal{D}} = 0$ away from the equator, so there cannot be a nontrivial common subspace. It follows that $p-p'=0$, which concludes the proof of theorem (\ref{thm:inverse}). 

\section{Special Holonomy}\label{holonomy}
In the final section, we provide an almost-Grassmannian analog of Pontecorvo's characterization of Kahler metrics on compact surfaces \cite{pon92}.

\begin{theorem}[Pontecorvo]\label{thm:pontecorvo}
    Let $M$ be a complex surface with an ASD Hermitian metric. The complex structure $J$ and its conjugation $-J$ define two sections of the twistor projection $Z \to M$. Denote the images of these sections by $\Sigma, \bar{\Sigma}$. Then the divisor line bundle $[\Sigma + \bar{\Sigma}]$ satisfies 
    \begin{equation}\label{eq:pontecorvo}
        [\Sigma + \bar{\Sigma}] \cong K_Z^{-1/2}
    \end{equation}
    if and only if $M$ is conformally Kahler. 
\end{theorem}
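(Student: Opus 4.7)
The plan is to use the twistor projection $\pi : Z \to M$ to reduce the divisor equivalence (\ref{eq:pontecorvo}) to the triviality of a single line bundle pulled back from $M$. First I would verify that the two bundles in question have equal restriction to fibers. Each fiber is a $\CP^1$ met transversally by $\Sigma$ and $\bar{\Sigma}$ at a single point each, namely $\pm J_x$, so $[\Sigma + \bar{\Sigma}]|_{\mathrm{fiber}} \cong \co(2)$. For the twistor space of an oriented Riemannian 4-manifold the restricted tangent bundle splits as $TZ|_{\mathrm{fiber}} \cong \co(2) \oplus \co(1)^{\oplus 2}$, giving $K_Z^{-1}|_{\mathrm{fiber}} \cong \co(4)$ and hence $K_Z^{-1/2}|_{\mathrm{fiber}} \cong \co(2)$ as well. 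Therefore $\mathcal{L} := [\Sigma + \bar{\Sigma}] \otimes K_Z^{1/2}$ is fiberwise trivial, so $\mathcal{L} \cong \pi^* L$ for a unique line bundle $L \to M$.

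Next I would identify $L$ by restricting $\mathcal{L}$ to the complex submanifold $\Sigma \cong (M,J)$; recall that $\Sigma$ is a genuine complex submanifold whenever $J$ is integrable, a standard property of ASD twistor spaces with the correct orientation. Since $\Sigma$ and $\bar{\Sigma}$ are disjoint one has $\mathcal{L}|_\Sigma \cong N_{\Sigma/Z} \otimes K_Z^{1/2}|_\Sigma$, and adjunction $K_\Sigma \cong K_Z|_\Sigma \otimes N_{\Sigma/Z}$ converts this into $K_{(M,J)}^{1/2} \otimes N_{\Sigma/Z}^{1/2}$. The normal bundle $N_{\Sigma/Z}$ equals the vertical tangent bundle of $\pi$ along $\Sigma$, which via the spinor description $Z = \proj(V_-)$ and the Hermitian splitting of $V_-$ induced by $J$ can be identified explicitly with a conformal-weight twist of the anti-canonical bundle of $(M,J)$. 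This pins down $L$ as a specific bundle built from $K_{(M,J)}$ and the conformal weight bundle.

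The third step is to recognize $L$ in terms of the Lee form. Recall that the Lee 1-form $\theta$ of a Hermitian structure is defined by $d\Omega = \theta \wedge \Omega$, and the metric is conformally K\"ahler precisely when $\theta$ is exact. The plan is to construct a local nonvanishing section of $\mathcal{L}$ using the K\"ahler form $\Omega$, compute its failure to glue globally, and identify the resulting cocycle on $M$ with $\theta$. Triviality of $L$ is then equivalent to exactness of $\theta$, which gives (\ref{eq:pontecorvo}).

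The main obstacle will be Step 2: matching the spinor-theoretic decomposition of $V_-$ with the Hermitian splitting on $M$ while carefully tracking conformal weights and the $2:1$ ambiguity in $K_Z^{1/2}$. Once this identification is clean, Step 3 reduces to a local computation recognizing a transition cocycle as the Lee form, which is largely routine; the forward direction (conformally K\"ahler implies (\ref{eq:pontecorvo})) then amounts to exhibiting $\Omega$ rescaled by the conformal factor as a global trivialization, while the converse requires extracting the exact primitive of $\theta$ from a nowhere-vanishing section of $L$.
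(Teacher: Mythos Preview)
The paper does not contain a proof of this statement. Theorem~\ref{thm:pontecorvo} is quoted with attribution to Pontecorvo \cite{pon92} purely as motivation for the almost-Grassmannian analog developed afterward in section~\ref{holonomy}; the text moves immediately from the statement to ``We will focus on deformations of the flat model\ldots'' without offering any argument. There is therefore nothing in this paper against which your proposal can be compared.

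If you want feedback on the proposal itself relative to Pontecorvo's original argument: your Steps~1--2 are sound and standard (the fiberwise degree count and the adjunction computation on $\Sigma$ are both correct), but Step~3 is where the real content lies and your description there is more of a hope than a plan. Pontecorvo does not proceed by extracting a Lee-form cocycle from $L$; rather, he observes that a real holomorphic section of $K_Z^{-1/2}$ with divisor exactly $\Sigma + \bar\Sigma$ corresponds, via the Penrose transform, to a real solution of the valence-$2$ twistor equation on $M$, and such a solution is precisely a conformal K\"ahler form. This is in fact the same mechanism the present paper exploits in Proposition~\ref{prop:twistoreq} and Theorem~\ref{thm:parallel} for the almost-Grassmannian case. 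Your route through the Lee form is equivalent in principle, but making it precise would force you to redo that transform computation in disguise, so it is not obviously a shortcut.
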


We will focus on deformations of the flat model $\widetilde{Gr}(2, \R^{2m+2})$. Although this model does not come equipped with a preferred complex structure, there is still a notion of compatibility between an almost-complex structure and a quaternionic almost-Grassmannian structure. In particular, $J$ is compatible with $TM \cong E \otimes H$ if it arises from an almost-complex structure on the rank 2 bundle $H$. Equivalently there exist local bases $e_1, \ldots, e_{2m}$ and $h_1, h_2$ of $E,H$ such that $J$ acts by 
\begin{align}
    J(e_i \otimes h_1) &= e_i \otimes h_2 \label{eq:jcompat} \\
    J(e_i \otimes h_2) &= -e_i \otimes h_1 \nonumber
\end{align}
A choice of $J$ then determines a $2m$-dimensional complex distribution $T^{1,0} \subset T_\C M$ of $\alpha$-planes. We can therefore interpret $J$ as a section of the disc bundle $\mathscr{F}_+ \to M$. Since these are genuine complex $\alpha$-planes, as opposed to complexified real ones, this section survives the blowing down map $\Phi : \mathscr{F}_+ \to \CP^{2m+1}$, and its image here is the $m$-dimensional quadric $Q$. Note, in analogy with (\ref{eq:pontecorvo}), that 
\begin{equation}
    [Q] \cong K_Z^{-1/(m+1)}.
\end{equation}

Now consider on $\C^{m+2}$ the standard volume form paired with the Euler vector field,
\begin{equation}
    \nu = \left( z_0 \frac{\partial}{\partial z_0} + \cdots + z_{2m+2} \frac{\partial}{\partial z_{2m+2}} \right) ~\lrcorner~  (dz^0 \wedge \cdots \wedge dz^{2m+2}).
\end{equation}
Since the Euler vector field has homogeneity 1, we may retard this as a weight $2m+2$ holomorphic volume form on $\CP^{2m+1}$, i.e as a section of $K \otimes \co(2m+2)$. The quadric $Q = \{q = 0\}$ given by $J$ from above then determines a meromorphic volume form 
\begin{equation}
    \Omega = \frac{\nu}{q^{m+1}}
\end{equation}
which is holomorphic away from $Q$. Whereas the setting of theorem (\ref{thm:pontecorvo}) provides a real structure that interchanges $\Sigma$ and $\bar{\Sigma}$, we will investigate the condition that $\Omega$ restricts to a real form along the real slice $P \subset \CP^{2m+1}$. 

With $J$ and the bases $E, H$ as above, define a scale $\epsilon$ by its dual $\epsilon^* = h_1 \wedge h_2 = h_1 \wedge J(h_2)$. Let $\zeta$ be a fiber coordinate of $\mathscr{F}_+$ where $\zeta \leftrightarrow h_1 + \zeta h_2$. Then by construction, $\Psi_{-1}(Q)$ corresponds to $\zeta = i$. Since $\Psi^* \Omega$ is a section of the canonical bundle, it must annihilate $\mathfrak{w}_j, \partial/\partial \zeta$. We therefore have the coordinate expression
\begin{equation}
    \Psi^* \Omega = \frac{f}{(1+\zeta^2)^{m+1}} [(h_1^* + \zeta h_2^*)^{2m} \otimes \epsilon] \wedge [d\zeta + F(\theta_j {}^i)]
\end{equation}
where $f$ is some bounded holomorphic function on $\mathscr{F}_+$ and $F$ is a function in the connection 1-forms $\theta_j{}^i$. Note that the totally real submanifold $P \subset \CP^{2m+1}$ pulls back to the boundary of $\mathscr{F}_+$, i.e. where $\zeta$ is real. If $\Omega$ is real along $P$, if follows that $F$ must be real where $\zeta$ is real. Along each $\zeta$ fiber, the reflection principle allows us to extend $f$ to all of $\C$, and then by boundedness $f$ must be constant. We may therefore regard $f$ as a function on $M$. 

Define $\hat{\Omega}$ to be the residue of $\Psi^* \Omega$ along $\Psi^{-1}(Q)$. Since $\Psi^* \Omega$ is holomorphic away from $\zeta = i$, we can choose to compute $\hat{\Omega}$ by integrating along the boundary $\partial \mathscr{F}_+$. By the preceding paragraph, $\hat{\Omega}$ is in fact real, with the explicit formula
\begin{equation}
    \hat{\Omega} = \frac{1}{2\pi i} \oint \Psi^* \Omega = f \cdot [(h_1^*)^2 + (h_2^*)^2]^m \otimes \epsilon.
\end{equation}
Here the contour integral is meant to be taken over each fiber. Since the entire expression is real, and the tensor product is over $\C$, we can assume without loss of generality that each of the terms in the right-hand side are real. We can then factor out the rescaled scale form $f\epsilon$ to obtain a symmetric rank $m$ spinor field, which we will denote by $\omega_{A' \ldots D'}$. In other words, $\hat{\Omega} = \omega_{A' \ldots D'} \otimes f \epsilon$. The construction of $\omega$ is a very concrete form of the Penrose transform for $H^1(\CP^{m+1}, \co(-2m-2))$ and will automatically satisfy the helicity $2m$ zero rest-mass equation 
\begin{equation}\label{eq:zrm}
    \nabla_{E[E}\omega_{A']\ldots D'} = 0;
\end{equation}
see [] for details. In fact, the field $\omega$ additionally solves the twistor equation.

\begin{prop}\label{prop:twistoreq}
    The spinor field $\omega_{A' \ldots D'}$ satisfies the twistor equation $\nabla_{E(} \omega_{A') \ldots D'} = 0$.
\end{prop}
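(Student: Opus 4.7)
The plan is to derive the twistor equation directly from the contour integral representation
\[
\omega_{A'\ldots D'}\otimes f\epsilon \;=\; \frac{1}{2\pi i}\oint \Psi^{*}\Omega,
\]
taken along each fiber of $\wp : \mathscr{F}_+ \to M$ around $\zeta = i$. The guiding principle is that, beyond the zero rest-mass equation (\ref{eq:zrm}), the twistor equation should follow from the very specific form $\Omega = \nu/q^{m+1}$, which is not merely a cohomology class on the twistor space but a globally defined meromorphic top form on $\CP^{2m+1}\setminus Q$.

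First I would apply $\nabla_{EA'}$ to both sides of the integral formula. This is justified because a tubular neighborhood of $\Psi^{-1}(Q)$ can be chosen uniformly in $x\in M$, so differentiation commutes with the contour. Using the explicit coordinate expression
\[
\Psi^{*}\Omega \;=\; \frac{f}{(1+\zeta^2)^{m+1}}\,\bigl[(h_1^{*}+\zeta h_2^{*})^{2m}\otimes \epsilon\bigr]\wedge \bigl(d\zeta + F(\theta_j{}^i)\bigr)
\]
established in the preceding discussion, I would then symmetrize over the primed indices $(A',B',\ldots,D')$. Since the factor $(h_1^{*}+\zeta h_2^{*})^{2m}$ encodes the symmetric structure of $\omega$ as a polynomial in $\zeta$, symmetrization over primed indices becomes a combinatorial operation on $\zeta$-monomials inside the integral. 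The target is to show that the resulting integrand is exact in $\zeta$ along each fiber, so that the contour integral vanishes.

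The hard part will be the bookkeeping of the connection 1-forms $\theta_j{}^i$ produced when $\nabla$ is commuted past $\Psi^*$: one must verify that after symmetrization these terms reassemble into a total $\zeta$-derivative. Two facts drive the cancellation. First, the closedness of $\Omega$ on $\CP^{2m+1}\setminus Q$ (automatic for a top-degree form) forces the non-exact contributions to come entirely from the pole at $\zeta = i$. Second, the already-established ZRM equation kills precisely the trace parts of $\nabla_{EA'}\omega_{B'\ldots D'}$, leaving only the totally symmetric (i.e.\ twistor) part to be eliminated. A more conceptual alternative would be to extend the Ward correspondence of Theorem \ref{thm:ward} to symmetric powers of the local twistor bundle, identify $\omega$ with the section of $\odot^{2m} J^1\co(1)$ furnished by the $(m+1)$-fold pole of $\Omega$ along $Q$, and then read off the twistor equation abstractly from the jet-bundle structure. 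Either route reduces the proposition to the input that $\Omega$ is a globally holomorphic top form on $\CP^{2m+1}\setminus Q$ with a pole of precisely the right order to produce a valence-$2m$ twistor field.
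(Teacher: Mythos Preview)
Your proposal has a genuine gap at the critical step. You reduce the proposition to showing that, after applying $\nabla_{EA'}$ and symmetrizing in primed indices, the integrand becomes $\zeta$-exact; but neither of the two ``facts'' you invoke accomplishes this. Closedness of $\Omega$ is, as you yourself note, automatic for a top-degree form and therefore imposes no constraint whatsoever---it cannot force any cancellation. And the ZRM equation (\ref{eq:zrm}) controls only the part of $\nabla_{EE'}\omega_{A'\ldots D'}$ that is \emph{antisymmetric} in a pair of primed indices, which is precisely the complement of the totally symmetric (twistor) part you are trying to kill. Saying that ZRM ``leaves only the twistor part to be eliminated'' is a tautology, not an argument. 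The connection-form bookkeeping you flag as ``the hard part'' is therefore the entire content of the proof, and you have given no mechanism by which it works out. The Ward-correspondence alternative is likewise too schematic to evaluate.

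The paper's proof bypasses all of this via a one-line identity. It introduces the tautological spinor $\pi^{A'} = h^{A'} + \zeta J(h^{A'})$ on $\mathscr{F}_+$ and observes directly from the explicit formula $\omega_{A'\ldots D'} = [h_{A'}h_{B'}+J(h_{A'})J(h_{B'})]\cdots$ that the full contraction is
\[
\pi^{A'}\cdots\pi^{D'}\,\omega_{A'\ldots D'} = (1+\zeta^2)^m.
\]
The point is that the right-hand side depends only on the fiber coordinate $\zeta$, not on the base. Hence the horizontal operator $\pi^{D'}\nabla_{DD'}$ annihilates it; since $\pi^{A'}$ commutes with that operator, one obtains $\pi^{A'}\cdots\pi^{D'}\nabla_{DD'}\omega_{A'\ldots D'}=0$ for every $\pi$, which is exactly the twistor equation. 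No contour integral, no connection-form expansion---the base-independence of $(1+\zeta^2)^m$ is the whole proof.
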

\begin{proof}
    On $\mathscr{F}_+$ there is a tautological object $\pi^{A'}$. Concretely, the total space of the bundle $\nu : \co^{A'} \to M$ carries the pullback bundle $\nu^*\co^{A'}$ which has a tautological section. This has homogeneity 1, and so descends through projectivization of the fibers to yield a section of $\nu^*\co^{A'} \otimes \co(1)$. Write $h^{A'} = h_1$, where the left-hand side uses spinor notation, and the right-hand side is one of the generating sections from before. Then we can parameterize $\pi^{A'} = h^{A'} + \zeta J(h^{A'})$, and in the same notation we have
    \[
        \omega_{A'\ldots D'} = \underbrace{[h_{A'}h_{B'} + J(h_{A'})J(h_{B'})]\cdots[h_{C'}h_{D'} + J(h_{C'})J(h_{D'})]}_{m}.
    \]
    Strictly speaking, $J$ only acts on $\co^{A'}$. By abuse of notation, $J(h_{A'})$ indicates the dual of $J(h^{A'})$. We then have
    \begin{equation}
        \pi^{A'} \cdots \pi^{D'} \omega_{A' \ldots D'} = (1 + \zeta^2)^m
    \end{equation}
    as an equation on $\mathscr{F}_+$. The important feature is that the right-hand side depends only on $\zeta$ and not upon any of the coordinates on $M$. Thus, the operator $\pi^{D'} \nabla_{DD'}$ must annihilate the left-hand side above. But the tautological object commutes with $\pi^{D'} \nabla_{DD'}$, and so
    \begin{equation}
        \pi^{A'} \cdots \pi^{C'}\pi^{D'} \nabla_{DD'}\omega_{A' \ldots D'} = 0.
    \end{equation}
    Since $\pi^{A'}$ sweeps out all possible spinors up to scale, we conclude that the primed symmetric part of $\nabla_{DD'}\omega_{A' \ldots D'}$, as an object on $M$, must vanish. 
\end{proof}

\begin{theorem}\label{thm:parallel}
    Suppose that $\Omega$ as above restricts to a real $n$-form along $P$. Then the connection on $M$ has holonomy contained in $SO(2, \R) \cdot SL(2m, \R)$. 
\end{theorem}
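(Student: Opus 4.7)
The plan is to extract from $\omega$ a covariantly constant, real, positive-definite symmetric rank $2$ primed spinor $\tau_{A'B'}$. Such a $\tau$ is a parallel Euclidean inner product on the real rank $2$ bundle $\co^{A'}$, and its existence reduces the $SL(2,\R)$ factor of the structure group of the primed spin bundle to $SO(2,\R)$. Combined with the fixed scale $\epsilon$ on the unprimed side yielding the $SL(2m,\R)$ factor, this forces the holonomy to lie in $SO(2,\R)\cdot SL(2m,\R)$.

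To obtain $\tau$, I read off the natural $m$-th root of $\omega$ from the explicit formula in the proof of Proposition \ref{prop:twistoreq}:
\[
\omega_{A_1'\cdots A_{2m}'} = \tau_{(A_1'A_2'} \tau_{A_3'A_4'} \cdots \tau_{A_{2m-1}'A_{2m}')}, \qquad \tau_{A'B'} := h_{A'}h_{B'} + J(h_{A'})J(h_{B'}),
\]
which is manifestly symmetric and positive definite. The reflection-principle argument in the preceding discussion shows that $f$ descends to a function on $M$, and the reality hypothesis then makes $\tau$ a real global spinor field. Substituting $\omega = \tau^{\odot m}$ into the valence $2m$ twistor equation from Proposition \ref{prop:twistoreq} and applying Leibniz yields, thanks to the nondegeneracy of $\tau$, the valence $2$ twistor equation $\nabla_{A(A'}\tau_{B'C')}=0$.

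The main obstacle is then to upgrade the twistor equation to full covariant constancy $\nabla_{AA'}\tau_{B'C'}=0$. A generic valence $2$ twistor solution is not parallel, since the twistor equation kills only the primed-symmetric part of $\nabla\tau$, with the remaining piece governed by a prolongation coupled to $\Psi_{ABC}{}^D$. The reality of $\Omega$ along $P$, as opposed to merely fiberwise holomorphy on $\mathscr{F}_+$, must therefore be what promotes the twistor section into a fully parallel section. Concretely, the plan is to prolong to a pair $(\tau_{A'B'},\mu^A{}_{A'})$ satisfying a first-order system, interpret $\tau$ via the Penrose-transform picture (Theorem \ref{thm:ward}) as a global holomorphic datum on $\CP^{2m+1}$ restricted via the real slice $P$, and then use the reality hypothesis to force the prolongation variable $\mu$ to vanish, leaving $\nabla\tau\equiv 0$. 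Once $\tau$ is parallel, the containment of holonomy in $SO(2,\R)\cdot SL(2m,\R)$ is immediate.
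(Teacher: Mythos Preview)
Your approach is on the right track in extracting the rank~2 root $\tau_{A'B'}$ and recognising that full covariant constancy of $\tau$ is what you need. But the route you propose for the ``main obstacle'' is both unnecessary and not actually carried out: you invoke a prolongation pair $(\tau_{A'B'},\mu^A{}_{A'})$ and then simply assert that the reality hypothesis will force $\mu=0$, without any mechanism for why this should happen. That is the gap.

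The paper's argument is much more direct and uses an ingredient you overlooked: $\omega$ was constructed via a Penrose transform of an element of $H^1(\CP^{2m+1},\co(-2m-2))$, and therefore automatically satisfies the helicity $2m$ zero rest-mass equation $\nabla_{E[E'}\omega_{A']B'\ldots D'}=0$. Since $q=2$, the derivative $\nabla_{EE'}\omega_{A'\ldots D'}$ decomposes (in the indices $E',A'$) as the sum of its primed-symmetric and primed-antisymmetric parts; the twistor equation kills the former and the zero rest-mass equation kills the latter, so $\nabla\omega=0$ immediately. The same pair of equations applies to the root $\tilde\omega=\tau$ (with the simple connectedness of $M$ guaranteeing $\tau$ is globally defined and real), giving $\nabla\tau=0$ without any prolongation analysis. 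The role of the reality hypothesis is not to kill a prolongation variable but, earlier in the argument, to ensure that $f$ descends to $M$ and that $\hat\Omega$ (hence $\omega$ and $\tau$) is genuinely real.
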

\begin{proof}
    Recall that $\omega$ is symmetric in all of its indices, and therefore the twistor equation of lemma (\ref{prop:twistoreq}) is equivalent to the vanishing of $\nabla_{E(E'}\omega_{A') \ldots D'}$. Since $\omega$ solves (\ref{eq:zrm}), we have
    \begin{equation}
        \nabla_{EE'}\omega_{A' \ldots D'} = \nabla_{E(E'}\omega_{A') \ldots D'} + \nabla_{E[E'}\omega_{A'] \ldots D'} = 0 
    \end{equation}
    and although $\omega$ is by construction a section of $\co_{A'\ldots D'}$, it is real and must therefore really be a section of $\odot^{2m} H^*$. In fact, $\omega$ has an $m$th root given in local coordinates by $\tilde{\omega} = h_{A'}h_{B'} + J(h_{A'})J(h_{B'})$ and the arguments showing that $w$ is parallel apply equally well to the root $\tilde{\omega}$. The only additional consideration is that $\tilde{\omega}$ may only be well defined up to an $m$th root of unity, but $M$ is simply connected, so it is indeed well-defined and we may also take it to be real. 

    The existence of a parallel section of $\odot^2 H^*$ is equivalent to a reduction of holonomy to a subgroup of $SO(2, \R) \cdot SL(2m, \R)$
\end{proof}

It may seem odd that $\omega$, initially constructed to solve the zero rest-mass equation, also happens to solve the twistor equation. The picture is a little more clear seen in reverse: the twistor equation is overdetermined and therefore the existence of a solution imposes additional structure. In fact, it is a nearly algebraic consequence that solutions to the twistor equation also satisfy the zero rest-mass equation, the proof of which is a standard exercise in tensor calculus.

\begin{prop}
    Suppose that $\omega_{A'B'} \in \Gamma(\co_{(A'B')})$ satisfies $\nabla_{C(C'} \omega_{A'B')} = 0$, is real, nondegenerate, and $|\omega| = const$. Then $\omega$ also satisfies $\nabla_{C[C'} \omega_{A']B'} = 0$. 
\end{prop}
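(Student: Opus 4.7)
The plan is to split the gradient $\nabla_{CC'}\omega_{A'B'}$ into its irreducible pieces in the primed indices, use the twistor equation to annihilate one piece, and then leverage the constant-norm hypothesis together with nondegeneracy to annihilate the other. Since the primed spin space is two-dimensional, every rank-three primed spinor $T_{C'A'B'}$ symmetric in $A'B'$ admits a unique decomposition into a totally symmetric part and a trace part,
\begin{equation*}
T_{C'A'B'} = T_{(C'A'B')} + \tfrac{1}{3}\bigl(\epsilon_{C'A'}\,U_{B'} + \epsilon_{C'B'}\,U_{A'}\bigr),
\end{equation*}
where $U_{B'} = \epsilon^{D'E'}T_{D'E'B'}$. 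Applying this with $C$ as a spectator to $T_{CC'A'B'} = \nabla_{CC'}\omega_{A'B'}$ yields
\begin{equation*}
\nabla_{CC'}\omega_{A'B'} = \nabla_{C(C'}\omega_{A'B')} + \tfrac{1}{3}\bigl(\epsilon_{C'A'}\tau_{CB'} + \epsilon_{C'B'}\tau_{CA'}\bigr),
\end{equation*}
where $\tau_{CB'} := \nabla_C{}^{D'}\omega_{D'B'}$. By construction $\tau_{CB'}$ vanishes precisely when $\nabla_{C[C'}\omega_{A']B'} = 0$, so the proof reduces to showing $\tau \equiv 0$.

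By the hypothesized twistor equation, the first term on the right of the decomposition vanishes identically. I would then extract $\tau$ by contracting the surviving identity with $\omega^{A'B'}$. Because $\omega$ is real, the scalar $|\omega|^2 = \omega^{A'B'}\omega_{A'B'}$ is a genuine real function, and its constancy gives $\omega^{A'B'}\nabla_{CC'}\omega_{A'B'} = 0$. Substituting the decomposition and using the symmetry of $\omega$ to merge the two $\epsilon$-trace terms, the right-hand side collapses to a nonzero scalar multiple of $\omega_{C'}{}^{B'}\tau_{CB'}$. Hence
\begin{equation*}
\omega_{C'}{}^{B'}\,\tau_{CB'} = 0.
\end{equation*}

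Finally, nondegeneracy of $\omega$ says exactly that the endomorphism $\omega_{C'}{}^{B'}$ of the primed spin space is invertible at each point, so the previous identity forces $\tau_{CB'} = 0$ for all $C,B'$. This is the zero rest-mass equation $\nabla_{C[C'}\omega_{A']B'} = 0$.

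The argument is essentially just tensor bookkeeping: no analysis, curvature identities, or integrability arguments are needed. The only point that requires genuine care is managing the $\epsilon$-sign conventions and the symmetry of $\omega$ so that the two summands in $\epsilon_{C'A'}\tau_{CB'} + \epsilon_{C'B'}\tau_{CA'}$ contract against $\omega^{A'B'}$ to give a nontrivial multiple of $\omega_{C'}{}^{B'}\tau_{CB'}$ rather than cancelling to yield the tautology $0=0$. Once this is arranged, the conclusion is immediate from invertibility of $\omega_{C'}{}^{B'}$.
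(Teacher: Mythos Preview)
Your argument is correct and is exactly the sort of ``standard exercise in tensor calculus'' the paper alludes to without writing out: the paper gives no proof beyond that phrase, so there is nothing to compare against except the sketch-level description, which your decomposition-plus-contraction argument matches. The one place to be slightly more careful is the step $\omega^{A'B'}\nabla_{CC'}\omega_{A'B'}=\tfrac{1}{2}\nabla_{CC'}|\omega|^2$, which tacitly uses $\nabla\epsilon=0$; this holds in the chosen scale, so the step is fine.
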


In order to make use of theorem (\ref{thm:parallel}), we need to know that there really exist nontrivial deformations of $\RP^{2m+1} \subset \CP^{2m+1}$ satisfying the necessary reality condition. The key is to recognize that $\Omega$ restricts to the standard real volume form on $\RP^{2m+1}$, and so to preserve this condition we will require deformations corresponding to divergence-free vector fields. 

In fact, such deformations are relatively easy to come by in the real analytic case. Suppose $v$ is a real analytic vector field on $\RP^{2m+1}$ which is divergence-free with respect to the standard metric. Then if $J$ denotes the ambient complex structure, $Jv$ is a normal vector which we want to flow $\RP^{2m+1}$ along. Note that $Jv + iv$ is a section of $T^{1,0}\CP^{2m+1}|_{\RP^{2m+1}}$ and by analyticity we must have an extension to some neighborhood of $\RP^{2m+1}$. We therefore get a family of biholomorphisms $\psi_t$ of this neighborhood. Furthermore, since $\mathcal{L}_{Jv + iv}\Omega = \text{div}(Jv+iv)\Omega$ and $\text{div}(iv)|_{\RP^{2m+1}} = 0$, we see that $\mathcal{L}_{Jv + iv}\Omega$ is real along $\RP^{2m+1}$. Then $\text{Im} \psi_t^*\Omega|_{\RP^{2m+1}} = 0$, showing that divergence-free vector fields yield deformations with the desired reality condition.


\end{document}